\newcommand{\diag}{\operatorname{diag}}
\def\gg{\mathfrak{g}}
\def\gh{\mathfrak{h}}
\def\gl{\mathfrak{l}}
\def\gn{\mathfrak{n}}
\def\go{\mathfrak{o}}
\def\gp{\mathfrak{p}}
\def\gs{\mathfrak{s}}
\def\gu{\mathfrak{u}}
\def\gv{\mathfrak{v}}
\def\gw{\mathfrak{w}}
\def\gz{\mathfrak{z}}
\def\C{\mathbb{C}}
\def\F{\mathbb{F}}
\def\H{\mathbb{H}}
\def\O{\mathbb{O}}
\def\R{\mathbb{R}}
\def\Z{\mathbb{Z}}
\def\Im{{\rm Im}\,}
\def\Re{{\rm Re}\,}
\def\Ad{{\rm Ad}}
\def\Aut{{\rm Aut}}
\def\diag{{\rm diag}}
\renewcommand{\thesection}{\arabic{section}}
\renewcommand{\thetable}{{\large \thesection.\arabic{equation}}}
\newtheorem{theorem}[equation]{Theorem}
\newtheorem{lemma}[equation]{Lemma}
\newtheorem{corollary}[equation]{Corollary}
\newtheorem{proposition}[equation]{Proposition}
\newtheorem{definition}[equation]{Definition}
\newtheorem{example}[equation]{Example}
\newtheorem{remark}[equation]{Remark}
\def\sideremark#1{\ifvmode\leavevmode\fi\vadjust{\vbox to0pt{\vss
 \hbox to 0pt{\hskip\hsize\hskip1em
\vbox{\hsize2cm\tiny\raggedright\pretolerance10000 
 \noindent #1\hfill}\hss}\vbox to8pt{\vfil}\vss}}} 
\begin{document}

\title{Weakly Symmetric Pseudo--Riemannian Nilmanifolds}

\author{Joseph A. Wolf}\thanks{JAW was partially supported by the Simons
Foundation grant 346300 for IMPAN and the matching 2015-2019 Polish MNSW fund.}
\address{Department of Mathematics \\ University of California, Berkeley \\
	CA 94720--3840, U.S.A.} \email{jawolf@math.berkeley.edu}

\author{Zhiqi Chen}\thanks{ZC was partially supported by the National Natural 
Science Foundation of China (Grant Nos. 11571182 and 11931009)
and the Natural Science Foundation of Tianjin (Grant no. 19JCYBJC30600).}
\address{School of Mathematical Sciences and LPMC \\ Nankai University \\
	Tianjin 300071, P.R. China} \email{chenzhiqi@nankai.edu.cn}

\date{11 November 2018; referee suggestions incorporated 17 February 2020}

\subjclass[2010]{53B30, 53C50, 53C30}

\keywords{weakly symmetric space, pseudo--Riemannian manifold, 
homogeneous manifold, Lorentz manifold, trans--Lorentz manifold}

\begin{abstract}
In an earlier paper we developed
the classification of weakly symmetric pseudo--Riemannian manifolds
$G/H$ where $G$ is a semisimple Lie group and $H$ is a reductive subgroup.
We derived the classification from the cases where $G$ is compact.
As a consequence we obtained the classification of semisimple weakly
symmetric manifolds of Lorentz signature $(n-1,1)$ and trans--Lorentzian
signature $(n-2,2)$.  Here we work out the classification of
weakly symmetric pseudo--Riemannian nilmanifolds $G/H$ from the
classification for the case $G = N\rtimes H$ with $H$ compact and
$N$ nilpotent.  It turns out that there is a plethora of new examples 
that merit further study.
Starting with that Riemannian case, we see just when a 
given involutive automorphism of $H$ extends to an involutive automorphism 
of $G$, and we show that any two such extensions result in isometric 
pseudo--Riemannian nilmanifolds.  The results are tabulated in the last two
sections of the paper.
\end{abstract}

\maketitle

\section{Introduction}\label{sec1}
\setcounter{equation}{0}

There have been a number of important extensions of the theory of
Riemannian symmetric spaces.  Weakly symmetric spaces, introduced
by A. Selberg \cite{S1956}, play important roles in number theory, 
Riemannian geometry and harmonic analysis; see \cite{W2007}.  
Pseudo--Riemannian symmetric spaces also appear in
number theory, differential geometry, relativity, Lie group
representation theory and harmonic analysis.
We study the classification of weakly symmetric pseudo--Riemannian nilmanifolds.
This is essentially different from the Riemannian symmetric case; the
only simply connected Riemannian symmetric nilmanifolds are the Euclidean 
spaces.  The work of Vinberg, Yakimova and others shows that
there are many simply connected weakly symmetric 
Riemannian nilmanifolds, and here we will see that 
each of them leads to a large number of
simply connected weakly symmetric pseudo--Riemannian nilmanifolds.  

Recall that a Riemannian manifold $(M,ds^2)$ is weakly symmetric if,
for every $x \in M$ and $\xi \in M_x$\,, there is an isometry 
$s_{x,\xi}$ such that $s_{x,\xi}(x) = x$ and $ds_{x,\xi}(\xi) = -\xi$.  In
that case $(M,ds^2)$ is homogeneous.  Let $G$ be the identity component
of the isometry group, so $M$ has expression $G/H$.  Then the following
are equivalent, and each follows from weak symmetry.
(i) $M$ is $(G,H)$--commutative, i.e. the convolution algebra
$L^1(H\backslash G/H)$ is commutative, i.e. $(G,H)$ is a Gelfand pair,
(ii) the left regular representation of $G$ on $L^2(G/H)$ is
multiplicity free, and (iii) the algebra of
$G$--invariant differential operators on $M$ is commutative.
If $G$ is reductive then conversely commutativity implies weak symmetry, 
but that can fail for example when $G = N\rtimes H$ with $N$ nilpotent.  See
\cite{W2007} for a systematic treatment with proper references.

The situation is more complicated for pseudo--Riemannian weakly symmetric 
spaces.  If $(M,ds^2)$ is pseudo--Riemannian then the definition of
weakly symmetric can require only the existence of isometries $s_{x,\xi}$
as above, either for all $\xi \in M_x$ or only for $\xi \in M_x$ such
that $ds^2(\xi,\xi) \ne 0$. The definition, (i) above, of commutativity,
is problematic for noncompact $H$ because the convolution product involves 
integration over $H$.  The multiplicity free condition, (ii) above, becomes
$Hom_H(\pi_\infty,id) \leqq 1$ for every $\pi \in \widehat{G}$,
where $\pi_\infty$ is the space of $C^\infty$ vectors for $\pi$; see
the work
\cite{T1984}, \cite{vD1986}, \cite{vD2008}, \cite{vD2009} and \cite{vD2011}
of Thomas and van Dijk on generalized Gelfand pairs.
Our class of weakly symmetric pseudo--Riemannian manifolds consists of
the complexifications, and real forms of the complexifications, of
weakly symmetric Riemannian manifolds.  There is still quite a lot of 
work to be done to clarify the relations between these
various extensions of\, ``weakly symmetric Riemannian manifold'' and the
corresponding extensions of\, ``commutative space''.

The weakly symmetric spaces
we study have the form $G/H$ where $H$ is reductive in $G$ and $G$
is a semidirect product $N\rtimes H$ with $N$ nilpotent.  We find a large number
of interesting new examples of these spaces, in particular many new
homogeneous Lorentz and trans--Lorentz (e.g. conformally Lorentz) manifolds.

Our analysis of weakly symmetric pseudo--Riemannian nilmanifolds is based on
the work of (in chronological order) Wolf (\cite{W1963}, \cite{W1964}),
Carcano \cite{C1987}, Benson-Jenkins-Ratcliff (\cite{BJR1990},
\cite{BJR1992}, \cite{BJR1999}), Gordon \cite{G1996}, Vinberg (\cite{V2001},
\cite{V2003}) and Yakimova (\cite{Y2004}, \cite{Y2005}, \cite{Y2006}),
as described in \cite[Chapter 13]{W2007}.  Starting there we adapt the
``real form family'' method of Chen--Wolf \cite{CW2017} to the setting of
nilmanifolds.

We first consider weakly symmetric pseudo--Riemannian nilmanifolds
$(M,ds^2)$, $M = G/H$ with $G = N\rtimes H$, $N$ nilpotent and $H$ reductive
in $G$.  We show that every space of that sort belongs
to a family of such spaces associated to a weakly symmetric
Riemannian manifold $M_r = G_r/H_r$ ($r$ for Riemannian).  There $H_r$ is a
compact real form of the complex Lie group $H_\C$\,, $G_r = N_r\rtimes H_r$
is a real form of $G_\C$ and
where $N_r$ is a real form of $N_\C$\,, and $M_r$ is a weakly
symmetric Riemannian nilmanifold.  In fact, every weakly symmetric
Riemannian manifold is a commutative space, and we work a bit more generally,
assuming that $M$ and $M_r$ are commutative nilmanifolds.

\begin{definition}\label{family-def}
{\rm The  {\bf real form family} of $G_r/H_r$ consists of
$(G_r)_\C/(H_r)_\C$ and all $G'/H'$ with the same complexification
$(G_r)_\C/(H_r)_\C$.  The space $(G_r)_\C/(H_r)_\C$ is considered to be 
a real manifold.  We denote that real form family by $\{\{G_r/H_r\}\}$.
In this paper, a space $G/H \in \{\{G_r /H_r\}\}$ is called
{\bf weakly symmetric} just when $G_r /H_r$ is a weakly 
symmetric Riemannian manifold.} \hfill $\diamondsuit$
\end{definition}

\noindent
Our classifications will consist of examinations of the various possible
real form families.  
Proposition \ref{family-desc} reduces this to an
examination of involutions of the groups $G_r$\,.

\begin{proposition}\label{family-desc}
Let $G_r = N_r \rtimes H_r$ where $N_r$ is nilpotent, $H_r$ is compact,
and $M_r = G_r/H_r$ is a commutative, connected, simply connected
Riemannian nilmanifold.  Let $\sigma$ be an involutive
automorphism of $G_r$ that preserves $H_r$.  Then there is a unique
$G'/H'$ in the real form family $\{\{G_r/H_r\}\}$ such that $G'$ is
connected and simply connected, $H'$ is connected, and
$\gg' = \gg_r^+ +\sqrt{-1}\gg_r^-$ with $\gh' = \gh_r^+ +\sqrt{-1}\gh_r^-$
in terms of the $(\pm 1)$--eigenspaces of $\sigma$ on $\gg_r$ and $\gh_r$\,.
Up to covering, every space $G'/H' \in \{\{G_r/H_r\}\}$
either is obtained in this way or is the real manifold underlying the
complex structure of $(G_r)_\C/(H_r)_\C$\,.
\end{proposition}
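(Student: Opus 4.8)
The plan is to pass to the complexification $\gg_\C = (\gg_r)_\C$ and phrase everything in terms of conjugations (antiholomorphic involutions). Let $\tau$ be the conjugation of $\gg_\C$ with fixed--point set $\gg_r$, and extend $\sigma$ complex--linearly to a holomorphic involution $\sigma_\C$ of $\gg_\C$. Since $\sigma$ is defined over $\R$ it commutes with $\tau$, so $\tau' := \tau\,\sigma_\C = \sigma_\C\,\tau$ is again an involutive conjugation of $\gg_\C$. A direct check on the $\sigma$--eigenspaces shows that its fixed--point set is exactly $\gg_r^+ + \sqrt{-1}\,\gg_r^-$, so $\gg'$ is a genuine real form of $\gg_\C$ with defining conjugation $\tau'$. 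Because the nilradical $\gn_\C$ is characteristic it is $\sigma_\C$--stable, hence $\sigma$ preserves $\gn_r$ and $\gh_r$ separately; the bracket relations $[\gg_r^+,\gg_r^-]\subseteq\gg_r^-$ and $[\gg_r^-,\gg_r^-]\subseteq\gg_r^+$, with the sign supplied by the factor $\sqrt{-1}$, then give that $\gn' = \gn_r^+ + \sqrt{-1}\,\gn_r^-$ is a nilpotent ideal, $\gh' = \gh_r^+ + \sqrt{-1}\,\gh_r^-$ is a subalgebra, and $\gg' = \gn'\rtimes\gh'$. Finally $\gh'$ is a real form of the reductive algebra $\gh_\C$ whose maximal compactly embedded subalgebra is $\gh_r^+$, since $\gh_r$ is compact.

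For existence at the group level I would build $G'$ intrinsically rather than as a fixed--point set. Let $N'$ be the simply connected nilpotent group with Lie algebra $\gn'$ and let $H'$ be the simply connected group with Lie algebra $\gh'$; because $N'$ is simply connected the action of $\gh'$ on $\gn'$ by derivations integrates to an action of $H'$ on $N'$, and I set $G' = N'\rtimes H'$. Then $G'$ is connected and simply connected with Lie algebra $\gg'$, the factor $H'$ is the connected (indeed simply connected) subgroup with Lie algebra $\gh'$ and is closed as the second factor of the semidirect product, its global Cartan decomposition $H' = H_r^+\cdot\exp(\sqrt{-1}\,\gh_r^-)$ with $H_r^+\subseteq H_r$ confirming connectedness. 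Both $\gg'$ and $\gg_r$ complexify to $\gg_\C$ and both $\gh'$, $\gh_r$ to $\gh_\C$, so the universal complexifications agree, $(G')_\C = G_\C = (G_r)_\C$ with matching connected complex subgroups of Lie algebra $\gh_\C$; hence $(G')_\C/(H')_\C = (G_r)_\C/(H_r)_\C$ and $G'/H'\in\{\{G_r/H_r\}\}$. Uniqueness is then immediate, since $\gg'$ and $\gh'$ are determined by $\sigma$ and a connected simply connected $G'$ with connected $H'$ realizing these Lie algebras is unique up to isomorphism.

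For the converse classification, start from an arbitrary member $G'/H'$ of the family that is \emph{not} the complex space $(G_r)_\C/(H_r)_\C$ itself. Then $G'$ is a real form of $G_\C$ and $H'$ a compatible real form of $H_\C$, so there is a conjugation $\tau'$ of $\gg_\C$ with $\gg' = (\gg_\C)^{\tau'}$ preserving both $\gn_\C$ (again characteristic) and $\gh_\C$. On the reductive factor I would use the standard fact that, since $\gh_r$ is a compact real form of $\gh_\C$, the restriction $\tau'|_{\gh_\C}$ is carried by an inner automorphism to one commuting with $\tau|_{\gh_\C}$; this replaces $G'/H'$ by an isomorphic space and makes $\sigma := \tau\,\tau'$ restrict to an honest involution of $\gh_r$ preserving $\gh_r$. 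Recognizing $\gg'$ as $\gg_r^+ + \sqrt{-1}\,\gg_r^-$ for this $\sigma$ then matches $G'/H'$, up to covering, with the space constructed above, while the remaining alternative is precisely the adjoined complex space.

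The step I expect to be the main obstacle is checking that, once $\tau'$ has been normalized on the reductive factor $\gh_\C$, the holomorphic automorphism $\sigma = \tau\,\tau'$ is genuinely involutive on the nilpotent factor $\gn_\C$ as well, so that it is an involution of all of $\gg_r$ and not merely of $\gh_r$. Here $\sigma|_{\gn_\C}$ is only forced to be $\sigma|_{\gh_\C}$--equivariant, so $\sigma^2|_{\gn_\C}$ is an $H_\C$--module endomorphism; to conclude $\sigma^2 = \mathrm{id}$ I would invoke that $M_r$ is a commutative nilmanifold, which by the Vinberg--Yakimova theory makes the $H_\C$--module $\gn_\C$ multiplicity free, forcing equivariant endomorphisms to be scalar on each isotypic component and pinning $\sigma|_{\gn_\C}$ down to an involution. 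This is the one place where the commutativity hypothesis on $M_r$ is used in an essential way.
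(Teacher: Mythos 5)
Your forward direction --- the Cartan--duality construction of $\gg' = \gg_r^+ + \sqrt{-1}\,\gg_r^-$, the group--level realization $G' = N'\rtimes H'$, and uniqueness --- is correct and is essentially the paper's own argument carried out in more detail. Your converse also follows the paper's route: the paper's entire normalization is the sentence ``without loss of generality \dots\ $\gg'$ and $\gh'$ are stable under the complex conjugation $\nu$,'' which is exactly your statement that $\tau'$ may be replaced by a conjugate commuting with $\tau$; and your handling of the reductive part by Cartan's theorem, implemented by $\Ad(g)$ with $g \in H_\C$ so that you stay inside the isomorphism class and do not disturb $\tau'$ elsewhere, is fine.

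The gap is in your resolution of what you correctly single out as the main obstacle. Multiplicity--freeness plus Schur's lemma gives that $\sigma^2|_{(\gn_r)_\C}$ acts by a scalar $c_i$ on each isotypic component of $(\gv_r)_\C$; it does \emph{not} give $c_i = 1$, and in fact $\sigma = \tau\tau'$ genuinely need not be involutive after only the $\gh_\C$--normalization. Already on a one--dimensional trivial module the two conjugations $\tau(z) = \bar z$ and $\tau'(z) = e^{i\theta}\bar z$ compose to $z \mapsto e^{-i\theta}z$, whose square is $e^{-2i\theta} \ne 1$ in general: equivariance is vacuous there, and Schur yields ``scalar,'' nothing more. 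The reality relation $\tau\sigma\tau = \sigma^{-1}$ only forces $|c_i| = 1$ and pairs up the scalars on $\tau$--conjugate components. To close the argument one needs a \emph{second} normalization on the nilpotent side: conjugate $\tau'$ by an equivariant automorphism $a$ of $\gg_\C$ that is the identity on $\gh_\C$, equals a scalar $\lambda_i$ on each irreducible summand $W_i \subset (\gv_r)_\C$, hence $\lambda_i\lambda_j$ on $[W_i,W_j] \subset (\gz_r)_\C$ so that $a$ is an automorphism, with $(\lambda_i/\bar\lambda_i)^2 = c_i$; this makes the new $\sigma$ involutive and, as one checks, leaves $\tau'|_{\gh_\C}$ unchanged. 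Two further cautions: multiplicity--freeness of the real $H_r$--module $\gv_r$ does not give multiplicity--freeness of $(\gv_r)_\C$ over $H_\C$ --- a quaternionic--type summand such as $\H^n$ for $Sp(n)$ complexifies with multiplicity two, so the commutant there is a full $2\times 2$ matrix algebra, not scalars --- and the danger you are trying to exclude is real: Lemma \ref{no-glH} of the paper exhibits equivariant extensions whose square is $-1$, and it is only the assumed existence of the honest conjugation $\tau'$ on all of $\gg_\C$ (i.e., that $G'/H'$ lies in the family) that makes the rescaling succeed. As written, your argument proves ``scalar,'' not ``identity,'' so the step you flagged remains open in your write--up; to be fair, the paper itself disposes of this same normalization with an unproved ``WLOG.''
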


\begin{proof}
First let $\sigma$ be an involutive automorphism of $G_r$ that preserves $H_r$.
In terms of the $(\pm 1)$--eigenspaces of $\sigma$ on $\gg_r$\,,
$\gg' = \gg_r^+ +\sqrt{-1}\gg_r^-$ is a well defined Lie algebra with nilradical
$\gn' = \gn_r^+ +\sqrt{-1}\gn_r^-$ and Levi component
$\gh' = \gh_r^+ +\sqrt{-1}\gh_r^-$\,.  Let $G'$ be the connected simply
connected
group with Lie algebra $\gg'$, and $H'$ and $N'$ the analytic subgroups
for $\gh'$ and $\gn'$\,.  Then $H'$ is reductive in $G'$, $N'$ is the
nilradical of $G'$ and is simply transitive on $G'/H'$, and
$G'/H' \in \{\{G_r/H_r\}\}$.

Conversely let $G'/H' \in \{\{G_r/H_r\}\}$; we want to construct the
involution $\sigma$ as above.  Without loss of generality we may assume
that $G'$ is connected and simply connected, that $H'$ is the
analytic subgroup for $\gh'$\,, and that $\gg'$ and $\gh'$ are stable under the
complex conjugation $\nu$ of $(\gg_r)_\C$ over $\gg_r$\,.  Then
$\gg' = (\gg')^+ + (\gg')^-$ and $\gh' = (\gh')^+ + (\gh')^-$,
eigenspaces under $\nu$.  Now $\gg_r = (\gg')^+ +\sqrt{-1} (\gg')^-$ and
$\gh_r = (\gh')^+ +\sqrt{-1} (\gh')^-$, and $\nu_{\gg_r}$ is the
desired involution.
\end{proof}

In Section \ref{sec2} we work out the relations between involutive
automorphisms of $H_r$ (which of course are known) and involutive
automorphisms of $G_r$\,.  This is a matter of understanding how to
extend an involutive automorphism of $H_r$ to an automorphism of $G_r$,
finding the condition for that extension to be involutive, and seeing that 
involutive extensions (when they exist) are essentially unique.

We need some technical preparation on linear groups and bilinear
forms in order to carry out our classifications.  That is carried 
out in Section \ref{sec3}.  Some of it is delicate.

In Section  \ref{sec4} we examine the case where $N_r$ is a 
Heisenburg group.  There are two distinct reasons for examining 
this Heisenberg case separately.  First, it indicates our general method 
and illustrates the need for the maximality condition when we look at 
a larger class of groups $N_r$\,.  But more important, the study of
harmonic analysis on $H_r\rtimes U(k,\ell)/U(k,\ell)$ is a developing
topic, and it is important to have a number of closely related examples.

Section \ref{sec5} contains our first main results.   We interpret
Vinberg's classification of maximal irreducible commutative
nilmanifolds, Table \ref{vin-table}, as the classification of all real
form families for all maximal irreducible commutative nilmanifolds.
Then we extend Vinberg's classification to a complete
analysis (including signatures of invariant pseudo--Riemannian metrics)
of the real form families of maximal irreducible commutative
nilmanifolds.  Many of these cases are delicate and rely on both 
the results of Section \ref{sec2} and specific technical
information worked out in Section \ref{sec3}.  Table \ref{max-irred} 
is the classification.  Then
we extract some non-Riemannian cases of special
interest from Table \ref{max-irred}.  Those are the cases of Lorentz signature
$(n-1,1)$, important in physical applications, and trans--Lorentz signature 
$(n-2,2)$, the parabolic geometry extension of conformal geometry.

Finally in Section \ref{sec6} we extend the results of Sections \ref{sec5}
to a larger collection of real form families, those where $G_r/H_r$ is 
indecomposable and satisfies certain technical conditions.  Those results
rely on the methods developed for the maximal irreducible case.  They 
are collected in Table \ref{indecomp}.  As corollaries we extract
the cases of Lorentz signature $(n-1,1)$ and trans--Lorentz signature
$(n-2,2)$.

\section{Reduction of the Real Form Question}\label{sec2}
\setcounter{equation}{0}

Proposition \ref{family-desc} reduces the classification of spaces in
a real form family $\{\{G_r/H_r\}\}$ to a classification of involutive
automorphisms of $G_r$ that preserve $H_r$\,.  If this section we
reduce it further to a classification of involutive automorphisms of
$H_r$\,.  For that we first review some facts about nilpotent groups that
occur in commutative Riemannian nilmanifolds.

Let $M_r = G_r/H_r$ be a commutative nilmanifold, $G_r = N_r \rtimes H_r$
with $N_r$ connected simply connected and nilpotent, and with $H_r$
compact and connected.  Then $N_r$ is the nilradical  of
$G_r$ and it is the only nilpotent subgroup of $G_r$ that is transitive
on $M_r$ \cite[Theorem 4.2]{W1963}\,.  Further, $N_r$ is commutative or
$2$--step nilpotent (\cite[Theorem 2.4]{BJR1990}, \cite[Theorem 2.2]{G1996}).
Thus we can decompose $\gn_r = \gz_r + \gv_r$ where $\gz_r$ is the center
and $\gv_r$ is an $\Ad(H_r)$--invariant complement.  Following
\cite[Chapter 13]{W2007} we say that
\begin{equation}\label{defs1}
\begin{aligned}
& G_r/H_r \text{ is {\em irreducible} if } [\gn_r,\gn_r] = \gz_r \text{ and }
        H_r \text{ is irreducible on } \gn_r/[\gn_r,\gn_r] \\
& (G_r/Z)/(H_r\cap Z) \text{ is a {\em central reduction} of } G_r/H_r
        \text{ if } Z \text{ is a closed central subgroup of } G_r \\
& G_r/H_r \text{ is {\em maximal } if it is not a nontrivial central
        reduction.}
\end{aligned}
\end{equation}

Now we employ a decomposition from \cite{V2001} and \cite{V2003}.  Split
$\gz_r = \gz_{r,0} \oplus [\gn_r,\gn_r]$ with $\Ad(H_r)\gz_{r,0} = \gz_{r,0}$\,.
Also decompose $\gn_r = \gz_{r,0} \oplus ([\gn_r,\gn_r] + \gv_r)$ with
$\gv_r = \gv_{r,1} + \dots + \gv_{r,m}$ vector space direct sum
where $\Ad(H_r)$ preserves and acts irreducibly on each $\gv_{r,i}$\,.
The representations of $\Ad(H_r)$ on the $\gv_{r,i}$ are mutually
inequivalent.
Consider the subalgebras $\gn_{r,i} = [\gv_{r,i},\gv_{r,i}] + \gv_{r,i}$ of
$\gn_r$ generated by $\gv_{r,i}$\,.  Then $[\gv_{r,i},\gv_{r,j}] = 0$
for $i \ne j$ and $(\xi_0,\xi_1,\dots,\xi_m) \mapsto (\xi_0+\xi_1+\dots+\xi_m)$
defines an $\Ad(H_r)$--equivariant homomorphism of
$\gz_r \oplus \gn_{r,1} \oplus \dots \oplus \gn_{r,m}$ onto $\gn_r$
with central kernel.

Let $N_{r,i}$ be the analytic subgroup of $N_r$ for $\gn_{r,i}$\,.
Let $H_{r,i}= H_r/J_{r,i}$ where $J_{r,i}$ is the
kernel of the adjoint action of $H_r$ on $\gn_{r,i}$\,.  Similarly let
$J_{r,0}$ be the kernel of the adjoint action of $H_r$ on $\gz_{r,0}$ and
$H_{r,0}= H_r/J_{r,0}$\,.  Let $G_{r,i} = N_{r,i}\rtimes H_{r,i}$ for
$i \geqq 0$.  As in  \cite[Section 13.4C]{W2007}, we summarize.

\begin{proposition}\label{factors}
The representation of $H_r$ on $\gv_{r,i}$ is irreducible.
If $i \ne j$ then $[\gn_{r,i},\gn_{r,j}] = 0$ and the representations
of $H_r$ on $\gv_{r,i}$ and $\gv_{r,j}$ are mutually
inequivalent.  $M_{r,0} = G_{r,0}/H_{r,0}$ is an Euclidean space, the other
$M_{r,i} = G_{r,i}/H_{r,i}$ are irreducible commutative Riemannian nilmanifolds.
\end{proposition}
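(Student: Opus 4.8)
The plan is to read the first three assertions off the Vinberg decomposition recalled just above, to verify the structure of the individual factors by direct computation, and then to invoke the decomposition theory of \cite{V2001,V2003} (as summarized in \cite[Section 13.4C]{W2007}) for the inheritance of commutativity, which is the only genuinely nontrivial point. Irreducibility of $H_r$ on each $\gv_{r,i}$ and mutual inequivalence of the representations are built into the choice of the decomposition $\gv_r = \gv_{r,1} + \dots + \gv_{r,m}$, so nothing is needed there. To pass from $[\gv_{r,i},\gv_{r,j}] = 0$ to $[\gn_{r,i},\gn_{r,j}] = 0$ for $i \ne j$, I would use that $N_r$ is $2$--step nilpotent, so $[\gn_r,\gn_r] \subseteq \gz_r$; in particular each $[\gv_{r,i},\gv_{r,i}]$ is central. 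Expanding
\[
[\gn_{r,i},\gn_{r,j}] = \bigl[\,[\gv_{r,i},\gv_{r,i}] + \gv_{r,i},\ [\gv_{r,j},\gv_{r,j}] + \gv_{r,j}\,\bigr],
\]
every term with a central factor vanishes and the only surviving term is $[\gv_{r,i},\gv_{r,j}] = 0$.

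For the factors themselves I would check the conditions in \eqref{defs1}. The splitting $\gz_r = \gz_{r,0} \oplus [\gn_r,\gn_r]$ makes $\gz_{r,0}$ central in $\gn_r$, so its analytic subgroup $N_{r,0}$ is a vector group and the induced left--invariant metric is flat; hence $M_{r,0} = G_{r,0}/H_{r,0} \cong N_{r,0}$ is a Euclidean space. For $i \geqq 1$, centrality of $[\gv_{r,i},\gv_{r,i}]$ shows that $\gn_{r,i}$ is abelian or $2$--step with $[\gn_{r,i},\gn_{r,i}] = [\gv_{r,i},\gv_{r,i}]$. Since the bracket is $\Ad(H_{r,i})$--equivariant, the subspace $\{v \in \gv_{r,i} : [v,\gv_{r,i}] = 0\}$ is $\Ad(H_{r,i})$--invariant, so by irreducibility it is $0$ or all of $\gv_{r,i}$; in the latter case $\gn_{r,i}$ is abelian, while in the former the center of $\gn_{r,i}$ equals $[\gn_{r,i},\gn_{r,i}]$ and $\gn_{r,i}/[\gn_{r,i},\gn_{r,i}] \cong \gv_{r,i}$ carries the irreducible $H_{r,i}$--action. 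Both conditions of irreducibility in \eqref{defs1} then hold.

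The main obstacle is the commutativity of each $M_{r,i}$, which the bracket bookkeeping does not give. Here I would use the $\Ad(H_r)$--equivariant homomorphism $\gz_r \oplus \gn_{r,1} \oplus \dots \oplus \gn_{r,m} \to \gn_r$ with central kernel, which realizes $M_r$ as a central reduction, in the sense of \eqref{defs1}, of the product $M_{r,0} \times M_{r,1} \times \dots \times M_{r,m}$. Commutativity of $M_r$ is equivalent to commutativity of this product, and---precisely because the $H_r$--representations on the distinct $\gv_{r,i}$ are pairwise inequivalent---commutativity of the product is equivalent to commutativity of each factor. This inheritance is the substance of the decomposition theorem of \cite{V2001,V2003} recorded in \cite[Section 13.4C]{W2007}, which I would cite rather than reprove; granting it, each $M_{r,i}$ with $i \geqq 1$ is an irreducible commutative Riemannian nilmanifold.
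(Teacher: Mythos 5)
Your overall strategy is essentially the paper's own: the paper gives no argument for this Proposition at all --- it is stated as a summary (``As in \cite[Section 13.4C]{W2007}, we summarize'') of the decomposition theory of Vinberg \cite{V2001}, \cite{V2003} --- and you likewise cite that theory for the only nontrivial point, the commutativity of the factors, while checking the elementary assertions directly. Most of those checks are correct: irreducibility and mutual inequivalence are indeed built into the decomposition, the bracket expansion deriving $[\gn_{r,i},\gn_{r,j}]=0$ from two-step nilpotency and $[\gv_{r,i},\gv_{r,j}]=0$ is exactly right, and so is the identification of $M_{r,0}$ with the flat vector group $\exp(\gz_{r,0})$.

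There is, however, one genuine hole in your verification of irreducibility of the factors with $i\geqq 1$. You allow the branch in which $\{v\in\gv_{r,i}:[v,\gv_{r,i}]=0\}$ is all of $\gv_{r,i}$, i.e.\ $\gn_{r,i}$ abelian, and then conclude that ``both conditions of irreducibility in (\ref{defs1}) hold'' --- a conclusion valid only in the other branch. If the abelian branch occurred, the assertion of the Proposition would fail for that factor: one would have $[\gn_{r,i},\gn_{r,i}]=0$ while the center of $\gn_{r,i}$ is all of $\gn_{r,i}$, so $M_{r,i}$ would be a Euclidean factor rather than an irreducible commutative nilmanifold. You must rule this branch out, and you can do so in one line from facts you already invoked: if $[\gv_{r,i},\gv_{r,i}]=0$, then since also $[\gv_{r,i},\gv_{r,j}]=0$ for $j\ne i$ and $[\gv_{r,i},\gz_r]=0$, the subspace $\gv_{r,i}$ is central in $\gn_r$; but $\gv_{r,i}\subseteq\gv_r$ and $\gv_r\cap\gz_r=0$ because $\gv_r$ is a complement of the center, forcing $\gv_{r,i}=0$, a contradiction. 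A lesser caveat: your statement that ``commutativity of $M_r$ is equivalent to commutativity of the product $\widetilde{M_r}$'' is not a formal property of central reductions --- only the descent direction, from $\widetilde{M_r}$ down to $M_r$, comes for free (the double coset algebra of $M_r$ is a quotient of that of $\widetilde{M_r}$); the upward direction, which is the one you actually need, is precisely the substance of the theorem of \cite{V2001}, \cite{V2003} recorded in \cite[Section 13.4C]{W2007}, so it should be presented as cited, not derived.
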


The $\Ad(H_r)$--equivariant homomorphism of
$\gz_{r,0} \oplus \gn_{r,1} \oplus \dots \oplus \gn_{r,m}$ onto $\gn_r$ defines
a Riemannian fibration
\begin{equation}\label{ffactors}
\gamma : \widetilde{M_r} =  M_{r,0} \times M_{r,1} \times \dots \times M_{r,m}
	\to M_r
\end{equation}
with flat totally geodesic fibers defined by intersections of the
$\exp([\gv_{r,i},\gv_{r,i}])$.


\begin{theorem}\label{spec-rad}
Let $M_r = G_r/H_r$ be a commutative nilmanifold, $G_r = N_r \rtimes H_r$
with $N_r$ connected simply connected and nilpotent, and with $H_r$
compact and connected.
Let $G'/H'$, $G''/H'' \in \{\{G_r/H_r\}\}$.
If $H' \cong H''$, then $G' \cong G''$\,.
\end{theorem}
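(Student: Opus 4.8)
The plan is to pass from the spaces to the corresponding involutions via Proposition \ref{family-desc}, and then to show that the isomorphism type of the whole group is forced by that of its reductive part. By Proposition \ref{family-desc} I may assume that $G'$ and $G''$ are connected and simply connected with $H'$, $H''$ connected, so that $G'/H'$ and $G''/H''$ arise from involutive automorphisms $\sigma'$ and $\sigma''$ of $G_r$ preserving $H_r$, with $\gg' = \gg_r^+ + \sqrt{-1}\,\gg_r^-$ and similarly for $\gg''$. Since a connected simply connected Lie group is determined by its Lie algebra, it suffices to prove $\gg' \cong \gg''$; writing $\nu'$ and $\nu''$ for the conjugations of $\gg_\C$ defining these real forms, I must show that $\nu'$ and $\nu''$ yield isomorphic real forms once their restrictions to $\gh_\C$ do.

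First I would record what $H' \cong H''$ buys us. Both $H'$ and $H''$ are real forms of the complex group with Lie algebra $\gh_\C = (\gh_r)_\C$, so they correspond to involutions $\tau' = \sigma'|_{\gh_r}$ and $\tau'' = \sigma''|_{\gh_r}$ of the compact form $\gh_r$; the hypothesis $H' \cong H''$ says exactly that $\tau'$ and $\tau''$ are conjugate under $\Aut(\gh_r)$. Inner conjugacies extend to automorphisms of $\gg_\C$ and carry a real form to an isomorphic one, so I may use the abstract identification $\gh' \cong \gh''$ to regard both conjugations as restricting to a single $\nu_0$ on $\gh_\C$, leaving the only remaining freedom in $\nu|_{\gn_\C}$. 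The task then becomes: any antilinear involution of $\gn_\C$ that intertwines the fixed $\gh_\C$--action with its $\nu_0$--conjugate and respects the bracket $\gv_\C \times \gv_\C \to \gz_\C$ produces, up to $\gh'$--isomorphism, the same real form $\gn'$.

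The engine for this is the multiplicity--free structure supplied by Proposition \ref{factors}, together with the decomposition $\gn_r = \gz_r + \gv_r$ and $\gv_r = \gv_{r,1} + \dots + \gv_{r,m}$. Because the complexified modules $(\gv_{r,i})_\C$ are mutually inequivalent irreducibles for $\gh_\C$, Schur's lemma forces the antilinear intertwiner $\nu|_{\gv_\C}$ to act componentwise, and on each isotypic piece it is determined by $\nu_0$ up to a scalar. The fixed, $\gh_\C$--equivariant bracket then propagates this determination to $\gz_\C$, so the center contributes nothing independent. What remains is to check that the scalar ambiguity does not alter the isomorphism type: the constraint $\nu^2 = 1$, combined with the presence of the reference compact form $\gg_r$, pins down the reality type (real, complex, or quaternionic) of each $(\gv_{r,i})_\C$ as an $\gh'$--module, so the $(\pm 1)$--eigenspace split of $\sigma$ on each $\gv_{r,i}$ depends only on $\tau$. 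Carrying the equivariant bracket along, $\gn'$ and $\gn''$ become isomorphic as $\gh'$--modules with bracket, whence $\gg' \cong \gg''$ and finally $G' \cong G''$.

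The step I expect to be the genuine obstacle is the last one: controlling the scalar ambiguity in the antilinear intertwiner on each irreducible component, that is, verifying that the real form of each $(\gv_{r,i})_\C$ --- and hence the signature data of the eigenspace split --- is dictated solely by the involution on $\gh_r$ and not by the particular extension to $\gn_r$. This is a real--structure computation that must treat the complex--type components, where $(\gv_{r,i})_\C$ is itself reducible, separately from the real and quaternionic ones; the inequivalence guaranteed by Proposition \ref{factors} is precisely what keeps these computations confined to a single component at a time.
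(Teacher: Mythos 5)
Your framework (reduce via Proposition \ref{family-desc}, compare the two structures through the $\gh_\C$--equivariant ``difference'' on $\gn_\C$, and apply Schur's lemma componentwise using the inequivalence in Proposition \ref{factors}) runs parallel to the paper's proof, but the proof is incomplete at exactly the step you flag as the genuine obstacle, and the statement you propose to verify there is false. You claim that the $(\pm 1)$--eigenspace split of $\sigma$ on each $\gv_{r,i}$ ``depends only on $\tau$,'' i.e.\ that the real form of $(\gv_{r,i})_\C$ is dictated by the involution on $\gh_r$ alone. It is not. Already in the Heisenberg case $\gn_r = \Im\C + \C^n$, $H_r = U(n)$, $\tau = \mathrm{id}$, there are two involutive extensions: $\sigma' = +1$ on $\gv_r$ and $\sigma'' = -1$ on $\gv_r$ (with $\sigma'' = +1$ on $\gz_r$, forced by $\gz_r = [\gv_r,\gv_r]$). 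They have opposite eigenspace splits and yield distinct real subalgebras $\gn' = \gz_r + \gv_r$ and $\gn'' = \gz_r + \sqrt{-1}\,\gv_r$ of $\gn_\C$. The theorem survives only because these non-equal real forms are abstractly isomorphic, and the isomorphism is not the one your sketch envisions: the $\gh'$--module isomorphism $v \mapsto \sqrt{-1}\,v$ from $\gv'$ to $\gv''$ \emph{negates} the bracket, $[\sqrt{-1}v_1,\sqrt{-1}v_2] = -[v_1,v_2]$, so one must take $\varphi(z,v) = (-z,\sqrt{-1}v)$, which acts by $-1$ on the center in order to absorb that sign. This explicit construction, together with its $\Ad(H_r)_\C$--equivariance so that it glues with $f : H' \cong H''$, is the actual content of the paper's proof; ``carrying the equivariant bracket along'' is precisely where the work lies, and no reality-type bookkeeping can substitute for it, since the ambiguity to be resolved is a genuine $\pm 1$ ambiguity in the extension, not an artifact.

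A secondary gap: your reduction to a single restriction $\nu_0$ on $\gh_\C$ assumes that the automorphism of $\gh_r$ conjugating $\tau'$ to $\tau''$ extends to $\gg_\C$. That is automatic for inner automorphisms but not for outer ones --- extendability of automorphisms of $H_r$ to $G_r$ is exactly the nontrivial issue treated in Theorem \ref{extends}. The paper sidesteps this by using only that $\sigma'|_{\gh_r}$ and $\sigma''|_{\gh_r}$ lie in the same component of the automorphism group, and then invoking de Siebenthal's theorem on disconnected compact groups to replace $\sigma''$ by an $\Ad(H_r)$--conjugate (inner, hence harmless) so that $\sigma'$ and $\sigma''$ commute; that is what legitimizes the Schur analysis of $\nu = (\sigma')^{-1}\sigma''$ with $\nu^2 = 1$. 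Your argument needs either this device or some explicit substitute before the componentwise comparison can even begin.
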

\begin{proof}
Let $\sigma', \sigma''$ be the involutive automorphisms of $G_r$ that
define $G'$ and $G''$.  As $H' \cong H''$, their restrictions to $\gh_r$
belong to the same component of the automorphism group.  Define
$L = \Ad(H_r) \cup \sigma'\Ad(H_r)$.  It is a compact group of linear
transformations of $\gg_r$ that has one or two components, and
$\sigma'' \in \sigma'\Ad(H_r)$.  Let $T$ be a maximal torus of the
centralizer of $\sigma'$ in $\Ad(H_r)$.  A theorem of de Siebenthal
\cite{deS1956} on compact disconnected Lie groups 
says that every element of $\sigma'\Ad(H_r)$ is $\Ad(H_r)$--conjugate to an
element of $\sigma' T$.  Thus we may replace $\sigma''$ by an
$\Ad(H_r)$--conjugate and assume $\sigma'' \in \sigma' T$.  That
done, $\sigma'$ and $\sigma''$ commute.  Thus we may assume that
$\nu := (\sigma')^{-1}\cdot\sigma''$ satisfies $\nu^2 = 1$.

We first consider the case where $G_r/H_r$ is irreducible.
In other words, following (\ref{defs1}), $[\gn_r,\gn_r] = \gz_r$
and $\Ad(H_r)|_{\gv_r}$ is irreducible.  Thus the commuting algebra
of $\Ad(H_r)|_{\gv_r}$ is $\R$, $\C$ or $\H$, so the only elements
of square $1$ in that commuting algebra are $\pm 1$.  As $\nu^2 = 1$, now
$\nu|_{\gv_r} = \pm 1$.  If
$\nu = 1$ on $\gv_r$ then $\nu = 1$ on $\gn_r$, in other words
$\sigma' = \sigma''$ on $\gn_r$\,.  Then $\gn' = \gn''$.  As
$\sigma'$ and $\sigma''$ commute, and as we have an isomorphism
$f: H' \cong H''$, we extend $f$ to an isomorphism $G' \to G''$ by the
identity on $N' = N''$.

The other possibility is that $\nu = -1$ on $\gv_r$. As linear
transformations of $\gv_r$, $\sigma' = c'$ and $\sigma'' = c''$
where $c'^2 = 1 = c''^2$ and $c'c'' = c''c'$.  Again using
irreducibility, $c' = \pm 1$ and $c'' = \pm 1$.  But $c'c'' =
\nu = -1$.  So we may suppose $c' = 1$ and $c'' = -1$.  Then
$\gn' = \gz_r + \gv_r$ and $\gn'' = \gz_r + \sqrt{-1}\gv_r$.
Now define $\varphi: \gn' \to \gn''$ by $\varphi(z,v) = (-z,\sqrt{-1}v)$.
Compute $[\varphi(z_1,v_1),\varphi(z_2,v_2)]$ =
$[(-z_1,\sqrt{-1}v_1),(-z_2,\sqrt{-1}v_2)]$ = $(-[v_1,v_2],0)$
= $\varphi([v_1,v_2],0)$ = $\varphi[(z_1,v_1),(z_2,v_2)]$.
Thus $\varphi: \gn' \to \gn''$ is an isomorphism.  It commutes with
$(\Ad(H_r)_\C)$, so it combines with $f: H' \cong H''$ to define an
isomorphism $G' \to G''$.

That completes the proof of Theorem \ref{spec-rad} in the case where
$G_r/H_r$ is irreducible.  We now reduce the general case to the
irreducible case, using Proposition \ref{factors}, i.e., material from
\cite[Section 13.4C]{W2007}.
As the representations
$\alpha_i$ of $H_r$ on the $\gv_{r,i}$ are inequivalent, $\nu|_{\gv_r}$
permutes the $\gv_{r,i}$\,.  If $\nu(\gv_{r,i}) = \gv_{r,j}$ with $i \ne j$
then $\nu$ defines an equivalence of $\alpha_i$ and $\alpha_j$\,,
contradicting inequivalence.  Thus $\nu(\gv_{r,i}) = \gv_{r,i}$ for every $i$.
As $\nu^2 = 1$ now $\nu|_{\gv_{r,i}} = \varepsilon_i = \pm 1$.  As in
the irreducible case $f:H' \cong H''$ together with the $\nu|_{\gv_{r,i}}$
defines an isomorphism $G' \cong G''$.
\end{proof}

\begin{theorem}\label{extends}
Let $M_r = G_r/H_r$ be a commutative, connected, simply connected
Riemannian nilmanifold, say with $G_r = N_r \rtimes H_r$ where $H_r$ is 
compact and connected and $N_r$ is nilpotent.  Let $\theta$ and $H$
denote an involutive automorphism of $H_r$ and the corresponding real
form of $(H_r)_\C$\,.  Consider the fibration
$\gamma: \widetilde{G_r}/\widetilde{H_r} = 
	\widetilde{M_r} \to M_r = G_r/H_r$
of {\rm (\ref{ffactors})}.  Then $\theta$ lifts to an involutive automorphism
$\widetilde{\theta}$ of $\widetilde{H_r}$\,, and $\widetilde{\theta}$ extends
to an automorphism $\sigma$ of $G_r$ such that $d\sigma(\gv_r) = \gv_r$\,.  

If $\sigma^2 = 1$ then the corresponding $(G,H) \in \{\{G_r/H_r\}\}$ is 
a homogeneous pseudo--Riemannian manifold.  If we cannot choose $\sigma$
so that $\sigma^2 = 1$ then $\theta$ and $H$ do not correspond to an
element of $\{\{G_r/H_r\}\}$.
\end{theorem}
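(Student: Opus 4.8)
The plan is to establish the four assertions in order: the involutive lift $\widetilde\theta$, the extension $\sigma$ with $d\sigma(\gv_r)=\gv_r$, the pseudo--Riemannian conclusion under $\sigma^2=1$, and the converse non--realization statement, which I expect to be the crux. First I would show that $\theta$ permutes the factors underlying the fibration $\gamma$. Since the representations $\alpha_i$ of $H_r$ on the $\gv_{r,i}$ are mutually inequivalent (Proposition \ref{factors}), each $\alpha_i\circ\theta$ is irreducible and hence equivalent to exactly one $\alpha_{\pi(i)}$; as $\theta^2=1$ the induced permutation $\pi$ of $\{0,1,\dots,m\}$ satisfies $\pi^2=1$, and $\theta(J_{r,i})=J_{r,\pi(i)}$ (with the Euclidean index $0$ handled through the $\Ad(H_r)$--action on $\gz_{r,0}$). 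Thus $\theta$ induces isomorphisms $H_{r,i}=H_r/J_{r,i}\to H_{r,\pi(i)}$, and assembling these over $\widetilde{H_r}=H_{r,0}\times\dots\times H_{r,m}$ yields $\widetilde\theta$, involutive because $\theta^2=1$ and $\pi^2=1$.

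Next I would build $\sigma$. For each $i$ the equivalence $\alpha_i\cong\alpha_{\pi(i)}\circ\theta$ furnishes, by Schur's lemma, an intertwiner $A_i:\gv_{r,i}\to\gv_{r,\pi(i)}$, unique up to a unit of the commuting division algebra $\R$, $\C$ or $\H$. I set $d\sigma=d\theta$ on $\gh_r$ and $d\sigma=A_i$ on $\gv_{r,i}$, and I am then forced to put $d\sigma[v,w]=[A_iv,A_iw]$ on the derived algebra; this is well defined because $\gn_{r,i}$ is generated by $\gv_{r,i}$ and $[\gv_{r,i},\gv_{r,j}]=0$ for $i\ne j$. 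Equivariance of the $A_i$ makes $d\sigma$ a Lie algebra automorphism factor by factor, and since $A_i(\gv_{r,i})=\gv_{r,\pi(i)}\subseteq\gv_r$ it satisfies $d\sigma(\gv_r)=\gv_r$. Descending along the $\Ad(H_r)$--equivariant homomorphism $\gz_r\oplus\gn_{r,1}\oplus\dots\oplus\gn_{r,m}\to\gn_r$ of Section \ref{sec2}, whose kernel is central and $\sigma$--stable, gives the automorphism $\sigma$ of $G_r$, restricting to $\theta$ on $H_r$. I would emphasize that $\sigma$ need not be involutive: on a $\pi$--fixed factor $d\sigma^2|_{\gv_{r,i}}=A_i^2$ lies in the commuting division algebra, and whether a rescaling of $A_i$ can force it to the identity is exactly the obstruction the theorem records.

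Now suppose $\sigma^2=1$. Proposition \ref{family-desc} produces the unique connected, simply connected $G'/H'\in\{\{G_r/H_r\}\}$ with $\gg'=\gg_r^++\sqrt{-1}\gg_r^-$ and $\gh'=\gh_r^++\sqrt{-1}\gh_r^-$; since $\sigma|_{\gh_r}=d\theta$, the eigenspaces of $\sigma$ on $\gh_r$ are those of $\theta$, so $H'$ is the real form $H$. The $\Ad(H_r)$--invariant positive definite inner product on $\gn_r\cong\gg_r/\gh_r$ that gives the Riemannian metric of $M_r$ complexifies and restricts to an $\Ad(H')$--invariant nondegenerate symmetric form on $\gn'=\gn_r^++\sqrt{-1}\gn_r^-$, positive on $\gn_r^+$ and negative on $\sqrt{-1}\gn_r^-$ because $\langle\sqrt{-1}x,\sqrt{-1}x\rangle=-\langle x,x\rangle$. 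As $N'$ is simply transitive on $G'/H'$, this is an invariant indefinite metric and $(G,H)=G'/H'$ is a homogeneous pseudo--Riemannian manifold.

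For the last assertion I would argue by contraposition: assume some $G'/H'\in\{\{G_r/H_r\}\}$ has $H'\cong H$. By Proposition \ref{family-desc}, apart from the real manifold underlying $(G_r)_\C/(H_r)_\C$, such a space arises from an involutive automorphism $\sigma_0$ of $G_r$ preserving $H_r$, and $\sigma_0|_{H_r}$ cuts out the real form $H'\cong H$, hence lies in the same component of $\Aut(H_r)$ as $\theta$. Exactly as in the proof of Theorem \ref{spec-rad}, the de Siebenthal conjugacy theorem lets me replace $\sigma_0$ by an $\Ad(H_r)$--conjugate whose restriction to $H_r$ is $\theta$ itself; then $\sigma_0$ is an involutive extension of $\theta$ with $d\sigma_0(\gv_r)=\gv_r$, so $\sigma$ can be chosen with $\sigma^2=1$. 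Consequently, if no involutive extension exists then $(H,\theta)$ is not realized in the family. The principal obstacle I anticipate is precisely this converse: one must ensure that a family member with $H'\cong H$ yields an involution whose restriction to $H_r$ can be aligned with $\theta$ rather than merely with a conjugate, and that the degenerate complex--structure case of Proposition \ref{family-desc} is correctly separated off; this is where the conjugacy argument and the eigenspace bookkeeping do the real work.
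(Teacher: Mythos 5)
Your construction of $\sigma$ (the first paragraph of the theorem) is in substance the paper's own proof: mutual inequivalence of the $\alpha_i$ gives the permutation $\pi$, Schur's lemma gives the intertwiners, the extension to $[\gn_r,\gn_r]$ is by $\Lambda^2$, and the general case reduces to the factors of the fibration (\ref{ffactors}); the paper packages exactly the same data through the disconnected group $\widetilde{H_r}=H_r\cup tH_r$ and a representation check, so the difference is purely organizational. Your handling of the case $\sigma^2=1$ is the argument of Corollary \ref{fam-ps-riem} and is fine.

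The genuine gap is in your contrapositive proof of the last assertion. First, de Siebenthal's theorem, as it is used in Theorem \ref{spec-rad}, conjugates $\sigma''$ into the set $\sigma'T$, i.e.\ it makes two automorphisms lying in the same component \emph{commute}; it never makes their restrictions to $H_r$ \emph{equal}. So the step ``replace $\sigma_0$ by an $\Ad(H_r)$--conjugate whose restriction to $H_r$ is $\theta$ itself'' is unjustified. Second, the statement you are thereby trying to prove --- that if some member of $\{\{G_r/H_r\}\}$ has isotropy group abstractly isomorphic to $H$, then $\theta$ admits an involutive extension --- is in fact false, and the paper's own results exhibit the failure. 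Take $H_r=SO(8)$ acting on $\gv_r=\R^8$ (Case 1 of Table \ref{vin-table}). By Lemma \ref{spin}, $SO^*(8)\simeq SO(6,2)$; the real form $SO(6,2)$ is realized in the family (Case 1 of Table \ref{max-irred}), yet Lemma \ref{no-so*} proves that the involution $\theta=\Ad(J)$ of $SO(8)$ cutting out $SO^*(8)$ has no involutive extension to $G_r$. The two relevant involutions of $\gs\go(8)$ are conjugate under $\Aut(\gs\go(8))$ (via triality) but not under $\Ad(SO(8))$, which is precisely the possibility your argument overlooks: abstract isomorphism of real forms only yields conjugacy of the involutions under the full automorphism group, and an outer conjugating automorphism need not transport $\sigma_0$ to an extension of $\theta$. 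The converse assertion must instead be read through the correspondence set up in Proposition \ref{family-desc}: a family member ``corresponds to $(\theta,H)$'' precisely when it is defined by an involutive automorphism of $G_r$ whose restriction to $\gh_r$ is $\theta$, equivalently $\gh'=\gh_r^{+}+\sqrt{-1}\,\gh_r^{-}$ for the $\theta$--eigenspaces. With that reading the converse is immediate from Proposition \ref{family-desc} --- if no involutive extension of $\theta$ exists, no such defining involution exists --- and no conjugacy argument is needed, or, as the $D_4$ example shows, available.
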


\begin{proof} 
In the notation leading to Proposition \ref{factors}, $\gv_r =
\gv_{r,1} + \dots + \gv_{r,m}$ where $\Ad(H_r)$ acts on
$\gv_{r,i}$ by an irreducible representation $\alpha_i$\,, and the
$\alpha_i$ are mutually inequivalent.  As $\theta(H_r) =H_r$ the
corresponding representations $\alpha'_i = \alpha_i\cdot\theta$ just
form a permutation of the $\alpha_i$\,, up to equivalence.
If $\theta$ is inner then $\alpha'_i = \alpha_i$\,.

If $i \ne j$ with $\alpha_i$ equivalent to $\alpha'_j$\,,  let
$\tau$ denote the intertwiner.  So $\alpha_i'(h)\tau  =
\tau\alpha_j(h)$ and the intertwiner $\tau$ interchanges
$\gv_{r,i}$ and $\gv_{r,j}$\,.  On the other hand if $\alpha_i$ is
equivalent to $\alpha'_i$\,, the intertwiner $\tau$ satisfies
$\alpha_i'(h)\tau  = \tau\alpha_i(h)$ and $\tau\gv_{r,i}
= \gv_{r,i}$.  Thus $\alpha(\theta(h))\tau
= \tau\alpha(h)$ for $h \in H_r$\,.

Define $\widetilde{H_r} = H_r \cup tH_r$
where $tht^{-1} = \theta(h)$ and $t^2$ belongs to the center of $H_r$\,.  
Define $\sigma(h) = \alpha(h)$ and
$\sigma(th) = \tau\alpha(h)$ for $h \in H_r$ (in particular $\sigma(t) = \tau$).
We check that $\sigma$ is a representation of $\widetilde{H_r}$ on $\gv$:

(i) $\sigma(th_1)\sigma(th_2) = \tau\alpha(h_1) \tau\alpha(h_2)
= \alpha(\theta h_1)\alpha(h_2) = \alpha(th_1th_2) = \sigma(th_1th_2)$,

(ii) $\sigma(th_1)\sigma(h_2) = \tau\alpha(h_1)\alpha(h_2) =
\tau\alpha(h_1h_2) = \sigma(th_1h_2)$, and

(iii) $\sigma(h_1)\sigma(th_2) = \alpha(h_1)\tau\alpha(h_2) =
\tau\alpha(\theta h_1)\alpha(h_2) = \tau\alpha(\theta(h_1)h_2) =
\sigma(t\theta(h_1)h_2) = \sigma(h_1 th_2)$.

Now we check that $\sigma(\widetilde{H_r})$ consists of automorphisms of
$\gn_r$.  Set $\sigma(t)$ equal to the identity on $\gz_{r,0}$.
Since $[\gv_{r,i},\gv_{r,j}] = 0$ for $i \ne j$ we extend
$\sigma(t)$ to the subalgebras $\gz_{r,i} := [\gv_{r,i},\gv_{r,i}]$
by $\Lambda^2(\alpha_i)$.  In order that this be well defined on
$[\gn_r,\gn_r]$ it suffices to know that $[\gn_r,\gn_r] = [\gv_r,\gv_r]$
is the direct sum of the $\gz_{r,i} = [\gv_{r,i},\gv_{r,i}]$.  
That is clear if there is only
one $[\gv_{r,i},\gv_{r,i}]$, in other words if $\Ad(H_r)$ is irreducible
on $\gv_r$\,.  In general $\theta$ permutes the irreducible
factors of the representation of $H_r$ on $\gv_r$, so it permutes the
$\gv_{r,i}$\,. Thus $\theta$ lifts to $\widetilde{H_r}$\,,
and we apply the irreducible case result to the factors $M_{r,i}$\,.
Thus $\theta$ extends to the automorphism $\sigma$ of $G_r$\,.
\end{proof}

\begin{remark}\label{vh-zvh}{\rm
If $\theta$ extends to $\sigma \in \Aut(G_r)$ then evidently that extension
is well defined on $\gv_r\rtimes H_r$\,.  But the converse holds as well
(and this will be important for us): If $\theta$ extends to $\alpha \in
\Aut(\gv_r\rtimes H_r)$ then $\theta$ extends to an element of $\Aut(G_r)$.
For $\alpha$ extends to $(\gz_r + \gv_r)\rtimes H_r$ since $\gz_r$ is an
$\Ad(H_r)$--invariant summand of $\Lambda^2_\R(\gv_r)$, and that extension
of $\alpha$ exponentiates to some $\sigma \in \Aut(G_r)$.}\hfill $\diamondsuit$
\end{remark}

\begin{corollary}\label{fam-ps-riem}
Let $M_r = G_r/H_r$ be a commutative, connected, simply connected
Riemannian nilmanifold, say with $G_r = N_r \rtimes H_r$ where $H_r$ is 
compact and connected and $N_r$ is nilpotent.  Let $M = G/H$ belong to
the real form family $\{\{G_r/H_r\}\}$.  Then $G = N\rtimes H$ where
$\gn = \gz + \gv$, $\gz$ is the center, and each of $\gn$ and $\gz$
has a nondegenerate $Ad(H)$--invariant symmetric bilinear form.  In
particular $M = G/H$ is a pseudo--Riemannian homogeneous space.
\end{corollary}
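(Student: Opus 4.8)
The plan is to transport the positive-definite $\Ad(H_r)$--invariant inner product that defines the Riemannian metric on $M_r$ across the complexification, and to restrict its complex-bilinear extension to the real form $\gn$. By Proposition \ref{family-desc} I may assume that $G/H$ is the space attached to an involutive automorphism $\sigma$ of $G_r$ preserving $H_r$, with $\gg = \gg_r^+ + \sqrt{-1}\gg_r^-$ and $\gh = \gh_r^+ + \sqrt{-1}\gh_r^-$ in terms of the $\sigma$--eigenspaces; the remaining possibility, that $G/H$ is the real manifold underlying $(G_r)_\C/(H_r)_\C$, I would dispose of at the end. Since $\sigma$ preserves the center $\gz_r$ and (by Theorem \ref{extends}) the complement $\gv_r$, these inherit $\sigma$--eigenspace decompositions $\gz_r = \gz_r^+ \oplus \gz_r^-$ and $\gv_r = \gv_r^+ \oplus \gv_r^-$, so that $\gn = \gz + \gv$ with $\gz = \gz_r^+ + \sqrt{-1}\gz_r^-$ and $\gv = \gv_r^+ + \sqrt{-1}\gv_r^-$; intersecting $\gn$ with the center $(\gz_r)_\C$ of $(\gn_r)_\C$ shows that $\gz$ is exactly the center of $\gn$.

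The first real step is to arrange that $\sigma$ is orthogonal. I start from a positive-definite $\Ad(H_r)$--invariant inner product $b$ on $\gn_r$ (chosen with $\gz_r \perp \gv_r$) and replace it by $\tfrac12\bigl(b + b\circ(\sigma\times\sigma)\bigr)$. Because $\sigma$ normalizes $\Ad(H_r)$ (it preserves $H_r$) and $\sigma^2 = 1$, the averaged form is still positive-definite, $\Ad(H_r)$--invariant, and now $\sigma$--invariant; hence the eigenspaces $\gn_r^+$ and $\gn_r^-$ are $b$--orthogonal. I then extend $b$ complex-bilinearly to $b_\C$ on $(\gn_r)_\C$ and restrict it to $\gn$. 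On $\gn_r^+$ it is positive, on $\sqrt{-1}\gn_r^-$ it equals $(\sqrt{-1})^2 b = -b$ hence negative, and the cross terms between $\gn_r^+$ and $\sqrt{-1}\gn_r^-$, namely $b_\C(x,\sqrt{-1}y) = \sqrt{-1}\,b(x,y)$, vanish precisely because $\gn_r^+ \perp \gn_r^-$. Thus $b_\C|_\gn$ is a real-valued, nondegenerate symmetric bilinear form of signature $(\dim\gn_r^+, \dim\gn_r^-)$; the identical computation on the $\sigma$--invariant subspace $\gz_r$ yields a nondegenerate form on $\gz$.

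It remains to check $\Ad(H)$--invariance and to pass to the metric. The infinitesimal invariance $b(\ad(\xi)x, y) + b(x,\ad(\xi)y) = 0$ for $\xi \in \gh_r$ extends $\C$--bilinearly to all $\xi \in (\gh_r)_\C$, in particular to $\xi \in \gh = \gh_r^+ + \sqrt{-1}\gh_r^-$; since the bracket on $\gg$ is the restriction of the bracket on $(\gg_r)_\C$ and $H$ is connected, $b_\C|_\gn$ is $\Ad(H)$--invariant, and likewise on $\gz$. Finally, $N$ is simply transitive on $G/H$ by Proposition \ref{family-desc}, so the tangent space at the base point is $\gn \cong \gg/\gh$ as an $\Ad(H)$--module, and the $\Ad(H)$--invariant nondegenerate form on $\gn$ defines a $G$--invariant pseudo--Riemannian metric on $M = G/H$. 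For the leftover complexification case I would run the same argument with $\Re\,b_\C$ in place of $b_\C|_\gn$, producing a nondegenerate neutral-signature $\Ad((H_r)_\C)$--invariant form; and ``up to covering'' is harmless since everything is computed at the Lie algebra level.

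I expect the only genuinely delicate point to be the reality and nondegeneracy of $b_\C|_\gn$: this is exactly where the orthogonality of the $\sigma$--eigenspaces is needed, and hence where the averaging that makes $\sigma$ orthogonal earns its keep. Everything else is bookkeeping about how the complexified bracket and form restrict to the real form $\gn$.
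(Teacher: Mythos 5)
Your proposal is correct and follows essentially the same route as the paper: both reduce via Proposition \ref{family-desc} and Theorem \ref{extends} to an involutive $\sigma$ preserving $\gz_r$ and $\gv_r$, and then apply Cartan duality to the positive definite $\Ad(H_r)$--invariant inner product on $\gn_r$ to obtain the nondegenerate $\Ad(H)$--invariant form on $\gn$ and $\gz$. Your extra steps (averaging $b$ so that the $\sigma$--eigenspaces are orthogonal, the explicit reality/nondegeneracy check, and the $\Re\,b_\C$ treatment of the complexification case) are just careful fillings-in of details the paper leaves implicit.
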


\begin{proof}
By Proposition \ref{family-desc} and Theorem \ref{extends},
the pair $(G,H)$ corresponds to an involutive automorphism $\sigma$ of
$G_r$ whose complex extension and restriction to $G$ gives a Cartan
involution of $H$.  We may assume that $\sigma$ preserves the 
nilradical $\gn_r$ of $\gg_r$, the center $\gz_r$ of $\gn_r$, and
the orthocomplement $\gv_r$ of $\gz_r$ in $\gn_r$.  Thus, in the
Cartan duality construction described in Proposition \ref{family-desc}, 
the positive
definite $\Ad(H_r)$--invariant inner product on $\gn_r$ (corresponding
to the invariant Riemannian metric on $G_r/H_r$), gives us a
nondegenerate $\Ad(H)$--invariant symmetric bilinear form on $\gn$
for which $\gv = \gz^\perp$.  The corollary follows.
\end{proof}

\begin{example}\label{heis}{\rm
Consider the Heisenberg group case $\gn_r = \Im\C + \C^n$ and
$\gh_r = \gu(n)$, with $\theta(h) = \overline{h}$.  Then $\theta$
extends to an involutive automorphism $\sigma$ of $\gg_r$ by complex
conjugation on $\gn_r$\,.
Denote $\widetilde{H_r} = H_r \cup tH_r$ where $t^2 = 1$ and
$tht^{-1} = \theta(h)$ for $h \in H_r$\,.  Then $\sigma(th)n =
\sigma(tht^{-1}\cdot t)n = \sigma(\theta(h))\sigma(t)n =
\sigma(\overline{h})\overline{n} = \sigma(t)(\sigma(h)n)$.  Thus in fact
$\sigma$ defines a representation of $\widetilde{H_r}$ given, in this
Heisenberg group case, by $\sigma(t): n \mapsto \overline{n}$.
}\hfill$\diamondsuit$
\end{example}

\section{Irreducible Commutative Nilmanifolds: Preliminaries}\label{sec3}
\setcounter{equation}{0}

Recall the definition (\ref{defs1}) of maximal irreducible commutative
Riemannian nilmanifolds.  They were classified by Vinberg 
(\cite{V2001}, \cite{V2003}) (or see \cite[Section 13.4A]{W2007}), and
we are going to extend that classification to the pseudo--Riemannian
setting.  In order to do that we need some specific results on linear
groups and bilinear forms.  We work those out in this section, and
we extend the Vinberg classification in the next section. 
\medskip

\centerline{$\mathbf{U(1)}$ {\bf factors.}}

We first look at the action of $\theta$ when $H_r$ has a $U(1)$ factor 
and see just when $\theta$ extends to an involutive automorphism of
$\gg_r$, in other words just when we do have a corresponding 
$(G,H)$ in $\{\{G_r,H_r\}\}$.

\begin{lemma}\label{ext1}
Let $(G_r,H_r)$ be an irreducible commutative Riemannian nilmanifold
such that $H_r = U(1)\cdot H'_r$\,.  Suppose that $|U(1)\cap H_r'| \geqq 3$.  
Let $(G,H) \in \{\{G_r,H_r\}\}$ corresponding to an involutive 
automorphism $\theta$ of $H_r$\,.  If $\theta|_{H_r'}$ is inner then
$H$ has form $U(1)\cdot H'$.  If $\theta|_{H_r'}$ is outer then
$H$ has form $\R^+\cdot H'$.
\end{lemma}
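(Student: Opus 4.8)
The plan is to reduce the whole statement to the sign of $\theta$ on the Lie algebra of the central circle. By Proposition \ref{family-desc} the subgroup $H$ is the real form with $\gh = \gh_r^+ + \sqrt{-1}\,\gh_r^-$, where $\gh_r^\pm$ are the $(\pm 1)$--eigenspaces of $\theta$. Since the $U(1)$ factor is the central circle acting on $\gv_r$ by a complex structure $J$ (scalar multiplication $e^{i\varphi}$), its Lie algebra $\mathfrak{u}(1)$ is a $\theta$--stable line, so $\theta$ acts there as $+1$ or $-1$. If $\theta = +1$ on $\mathfrak{u}(1)$ then $\mathfrak{u}(1) \subset \gh_r^+$ and $H$ keeps a compact $U(1)$; if $\theta = -1$ then $\sqrt{-1}\,\mathfrak{u}(1) \subset \gh$ and that circle becomes a noncompact $\R^+$. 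Thus everything comes down to the sign of $\theta$ on $\mathfrak{u}(1)$.

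First I would record the representation--theoretic reformulation supplied by Theorem \ref{extends}: the extension acts on $\gv_r$ through an intertwiner $\tau = \sigma(t)$ with $\tau\,\alpha(h)\,\tau^{-1} = \alpha(\theta(h))$ for all $h \in H_r$. Applying this to $h \in U(1)$, which acts as the scalars $e^{i\varphi}$ through $J$, shows that $\theta = +1$ on $U(1)$ precisely when $\tau J = J\tau$ ($\tau$ complex--linear), and $\theta = -1$ precisely when $\tau J = -J\tau$ ($\tau$ complex--antilinear). So ``$U(1)$ stays compact'' is the same as ``$\tau$ is $\C$--linear.''

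Next I would reduce to a single central element, which is exactly where the hypothesis $|U(1)\cap H_r'| \geqq 3$ is used. Let $\zeta$ generate the cyclic group $Z = U(1)\cap H_r'$; then $\zeta \in Z(H_r')$ and $\alpha(\zeta)$ is a primitive $|Z|$--th root of unity times the identity. Because $\theta$ is one automorphism of $H_r = U(1)\cdot H_r'$, its restrictions must agree on the overlap: $\theta|_{U(1)}(\zeta) = \theta|_{H_r'}(\zeta)$. Since $|Z| \geqq 3$ gives $\zeta \neq \zeta^{-1}$, the relation $\theta(\zeta) = \zeta^{\varepsilon}$ unambiguously fixes $\varepsilon = \pm 1$, and by the previous paragraph $\varepsilon = +1$ yields $H = U(1)\cdot H'$ while $\varepsilon = -1$ yields $H = \R^+\cdot H'$. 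The inner case is then immediate: an inner automorphism of $H_r'$ fixes the center pointwise, so $\theta(\zeta) = \zeta$, forcing $\varepsilon = +1$ and $H = U(1)\cdot H'$.

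For the outer case I must show $\theta(\zeta) = \zeta^{-1}$, and I expect this to be the main obstacle. The clean route is representation--theoretic: in the $U(1)$--factor case the commuting algebra of $H_r'$ on $\gv_r$ is $\C$, so the complex irreducible $W = (\gv_r, J)$ is of complex type, $W \not\cong \overline{W}$. Splitting $\tau$ into its $\C$--linear and $\C$--antilinear parts and applying Schur's lemma shows $\tau$ is \emph{purely} linear or \emph{purely} antilinear, according as $W^\theta \cong W$ or $W^\theta \cong \overline{W}$. Inner twists fix the class of $W$, so $\theta \mapsto [W^\theta] \in \{[W],[\overline{W}]\}$ factors through $\Out(H_r')$; the real content is that the nontrivial outer class is realized by complex conjugation, i.e.\ $W^\theta \cong \overline{W}$, equivalently that the outer automorphism of $H_r'$ inverts the central $\zeta$. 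The hard part will be establishing this last fact uniformly, and I would settle it from the specific factors in Vinberg's classification (for the $A_n$-- and $E_6$--type groups occurring here it is precisely the statement that the diagram automorphism acts by $-1$ on the center). Granting it, $\tau$ is antilinear, $\theta(\zeta) = \zeta^{-1}$, $\varepsilon = -1$, and $H = \R^+\cdot H'$.
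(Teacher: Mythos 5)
The paper states Lemma \ref{ext1} with no proof at all, so there is nothing of the authors' to compare your argument against line by line; judged on its own terms, most of your proposal is correct and is surely the intended mechanism. The reduction to the sign of $\theta$ on $\gu(1)$ via Proposition \ref{family-desc}, the observation that $\theta$ preserves the characteristic subgroups $U(1)$ and $H_r'$ so that $\theta|_{U(1)} = \pm 1$ is detected on a generator $\zeta$ of $Z = U(1)\cap H_r'$ precisely because $|Z| \geqq 3$, and the inner case (inner automorphisms fix $Z(H_r') \supseteq Z$ pointwise) are complete and correct. The paper's own case discussions confirm this is the intended crux: in Section \ref{sec4}, Case 22, the authors invoke exactly the fact that outer automorphisms of $E_6$ invert its center $\Z_3$.

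The genuine gap is the outer case, and it is worse than the ``hard part'' you flag: the fact you defer is actually false at the lemma's stated level of generality. You need ``$\theta|_{H_r'}$ outer $\Rightarrow \theta(\zeta) = \zeta^{-1}$,'' to be verified against Vinberg's list. For \emph{simple} $H_r'$ this works --- every outer automorphism of $SU(n)$, $n \geqq 3$, of $Spin(10)$ (which your list omits: center $\Z_4$, Case 14 of Table \ref{vin-table}, inverted by the $D_5$ diagram automorphism), and of $E_6$ inverts the center, since $\Aut$ acts on the center through $\Out \cong \Z_2$. But the hypotheses of the lemma also admit $H_r' = SU(n)\times SU(n)$ (Case 19 of Table \ref{vin-table} with $m = n$, where $|U(1)\cap H_r'| = n \geqq 3$), and there the needed fact fails: the factor swap $\theta\bigl(c(A\otimes B)\bigr) = c(B\otimes A)$ is an outer automorphism of $H_r'$ that fixes every $\zeta = \lambda\mu \in Z$, it extends to an involutive automorphism of $G_r$ (act by the transpose $X \mapsto X^t$ on $\gv_r = \C^{n\times n}$ and by the identity on $\gz_r$), and the resulting member of the family is $H = U(1)\cdot SL(n;\C)$ --- an outer $\theta$ whose $H$ retains a compact $U(1)$. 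So no case-by-case check can close your argument as written; both your proof and the lemma itself require an extra hypothesis (e.g.\ that $H_r'$ is simple, or at least that $\theta$ does not interchange isomorphic simple factors of $H_r'$), and under that restriction your outline does yield a complete proof. Incidentally this is a defect of the paper and not only of your reconstruction: Table \ref{max-irred}, Case 19 with $m = n$, records $\R^+\cdot SL(m;\C)$, which comes from the swap composed with conjugation, but omits the form $U(1)\cdot SL(m;\C)$ produced by the plain swap.
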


\begin{lemma}\label{ext2}
Let $(G_r,H_r)$ be an irreducible commutative Riemannian nilmanifold
such that $H_r = U(1)\cdot H'_r$\,.  Suppose that $|U(1)\cap H_r'| \leqq 2$.
Let $\theta'$ be an involutive automorphism of $H_r'$ and $H'$ the
corresponding real form of $(H'_r)_\C$\,.  
Then $\{\{G_r,H_r\}\}$ contains both an irreducible commutative 
pseudo--Riemannian nilmanifold with $H = U(1)\cdot H'$ and
an irreducible commutative pseudo--Riemannian nilmanifold with 
$H = \R^+\cdot H'$.
\end{lemma}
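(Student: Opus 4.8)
The plan is to realize both required members of $\{\{G_r,H_r\}\}$ by exhibiting two involutive automorphisms of $G_r$ that restrict to $\theta'$ on $H'_r$ but differ in how they act on the central circle factor, and then to feed these into Proposition \ref{family-desc} and Theorem \ref{extends} to produce the two pseudo--Riemannian nilmanifolds. Write $D = U(1)\cap H'_r$, so by hypothesis $D\subseteq\{1,d\}$ with $d^2 = 1$; since $d$ is central it acts on the irreducible $\gv_r$ by a scalar, and that scalar is $-1$, so in particular $\alpha$ is injective on $D$. First I would define two involutions $\theta_A,\theta_B$ of $H_r = U(1)\cdot H'_r$, each equal to $\theta'$ on $H'_r$, with $\theta_A$ the identity on $U(1)$ and $\theta_B$ the inversion $z\mapsto z^{-1}$ on $U(1)$.

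For well--definedness on the almost--direct product $H_r = (U(1)\times H'_r)/D$ I must check that each $\theta_\bullet$ respects the identification, i.e. that $\theta_\bullet(d) = \theta'(d)$ for $d\in D$. Since $d$ has order $\leqq 2$, both the identity and the inversion fix $d$, so $\theta_A(d) = \theta_B(d) = d$; and from the intertwiner relation $\alpha(\theta'(h))\tau = \tau\alpha(h)$ of Theorem \ref{extends}, together with the fact that $\alpha(d)$ is a scalar and $\tau$ commutes with scalars, one gets $\alpha(\theta'(d)) = \tau\alpha(d)\tau^{-1} = \alpha(d)$, hence $\theta'(d) = d$ by injectivity of $\alpha|_D$. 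Thus both $\theta_A$ and $\theta_B$ descend to genuine involutive automorphisms of $H_r$ restricting to $\theta'$ on $H'_r$. This is the step where the hypothesis $|D|\leqq 2$ is used: it is precisely what lets the action on $U(1)$ be chosen freely, whereas a nontrivial element of order $\geqq 3$ would force the choice and produce the rigidity of Lemma \ref{ext1}.

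Next I would extend each $\theta_\bullet$ to an automorphism $\sigma_\bullet$ of $G_r$ by Theorem \ref{extends}, using an intertwiner $\tau_\bullet$ on $\gv_r$ realizing $\alpha\cong\alpha\circ\theta_\bullet$. Because $\theta_A$ fixes $U(1)$, the intertwiner $\tau_A$ can be taken to commute with the scalar $U(1)$--action, i.e. $\C$--linear on $\gv_r$; because $\theta_B$ inverts $U(1)$, the intertwiner $\tau_B$ anticommutes with that action, i.e. $\C$--antilinear on $\gv_r$ (exactly as in Example \ref{heis}, where $\tau_B\colon n\mapsto\overline{n}$). The one point needing care is arranging $\sigma_\bullet^2 = 1$, and I expect this to be the main obstacle: $\tau_\bullet^2$ commutes with the irreducible $\alpha(H_r)$, so by Schur it is a scalar in the commuting division algebra, and I would rescale $\tau_\bullet$ by a suitable element of the central $U(1)$ (resp. of the commuting field) to normalize $\tau_\bullet^2 = \mathrm{id}$, with the corresponding choice $t^2 = 1$. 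The condition $|D|\leqq 2$ again supplies the needed room, since a scalar adjustment of $\tau_\bullet$ meets $\alpha|_{H'_r}$ only in $\{\pm 1\}$ and so does not disturb compatibility with the action on $H'_r$.

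Finally, with $\sigma_A^2 = \sigma_B^2 = 1$, Proposition \ref{family-desc} produces members $(G_A,H_A),(G_B,H_B)\in\{\{G_r,H_r\}\}$, each pseudo--Riemannian by Corollary \ref{fam-ps-riem} and irreducible because $\Ad(H)$ remains irreducible on $\gv$ (the construction preserves $\gv_r$ and its complexification). It then remains to read off the central factor from the eigenspace construction $\gh' = \gh_r^+ + \sqrt{-1}\,\gh_r^-$: in case $A$ the generator of $\gu(1)$ lies in $\gh_r^+$, so it is unchanged and $H_A = U(1)\cdot H'$; in case $B$ it lies in $\gh_r^-$, so it is multiplied by $\sqrt{-1}$ and exponentiates to $\R^+$, giving $H_B = \R^+\cdot H'$. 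This yields the two asserted nilmanifolds.
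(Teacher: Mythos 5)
Your reduction of the problem to producing a $\C$--linear intertwiner $\tau_A$ and a $\C$--antilinear intertwiner $\tau_B$ with square $1$ is the right framework (the paper states this lemma without proof, but this is clearly the intended mechanism), and the $U(1)\cdot H'$ half essentially works: for $\C$--linear $\tau_A$, Schur gives $\tau_A^2 = c \in \C^*$ and one can solve $\lambda^2 c = 1$ inside the commuting algebra $\C$. The genuine gap is the normalization step for $\tau_B$. If $\tau_B$ is $\C$--antilinear then $\tau_B^2$ is $\C$--linear, lies in the commuting algebra, and commutes with $\tau_B$; hence $\tau_B^2 = c$ with $c$ \emph{real}. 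Rescaling by any scalar $\lambda$ gives $(\lambda\tau_B)^2 = \lambda\overline{\lambda}\,\tau_B^2 = |\lambda|^2 c$, so the \emph{sign} of $c$ is an invariant that no rescaling --- by $U(1)$, by $\R^+$, or by anything in the commuting field --- can change; the hypothesis $|U(1)\cap H'_r|\leqq 2$ supplies no room here, contrary to what you assert. This sign is precisely the obstruction that Section \ref{sec3} of the paper is built around: it is the computation $I = B\overline{B} = |c|^2 J^2 = -I$ in Lemma \ref{no-glH}, and the second case of Remark \ref{some-prods-maybe} (``the extension $x \mapsto cJ\overline{x}$ has square $-I$, thus is not involutive'').

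Moreover the gap cannot be repaired, because the sign really is negative in cases allowed by the hypotheses. Writing $\rho$ for the antilinear structure map commuting with $\alpha(H'_r)$ (with $\rho^2 = +1$ or $-1$ according to the Frobenius--Schur type of $\gw|_{H'_r}$) and $\theta' = {\rm Ad}(j)$ when $\theta'$ is inner, every antilinear intertwiner for $\theta_B$ is $\lambda\,\alpha(j)\rho$ and has square $|\lambda|^2\alpha(j^2)\rho^2$. For $H_r = U(1)\cdot Sp(1) = U(2)$ on $\gv_r = \C^2$ with $\theta' = \mathrm{id}$ --- or, if you insist that $\theta'$ have exact order $2$, for $H_r = U(1)\cdot Sp(2)$ with $\theta' = {\rm Ad}(\diag(-1,1))$, $H' = Sp(1,1)$ --- one has $\rho^2 = -1$ and $\alpha(j^2) = +1$, so every antilinear intertwiner has negative square and $\R^+\cdot H'$ does not occur; in the first example $\R^+\cdot Sp(1) = GL(1;\H)$, which the paper's own Lemma \ref{no-glH} (with $n=2$) explicitly excludes. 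So your argument, if it were correct, would prove a statement contradicting Lemma \ref{no-glH}; the error is localized exactly at the rescaling claim, and the $\R^+\cdot H'$ half of the conclusion genuinely requires the case--by--case type analysis of Section \ref{sec3}. Two smaller gaps, secondary by comparison: the existence of $\tau_B$ at all requires $\gw\circ\theta' \cong \overline{\gw}$ as $H'_r$--modules (self--duality of $\gw|_{H'_r}$), which you never verify; and your deduction of $\theta'(d) = d$ from $\alpha(\theta'(d)) = \alpha(d)$ needs faithfulness of $\alpha$ on the relevant central elements, not just injectivity on $D$.
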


For all $G_r/H_r$ in Vinberg's list (\ref{vin-table}), for which 
$H_r = (U(1)\cdot) H'_r$\,,
the representation of $H_r$ on $\gv_r$ is not absolutely irreducible.
In other words $(\gv_r)_\C$ is of the form $\gw_r \oplus \overline{\gw_r}$
in which $\gv_r$ consists of the $(w,\overline{w})$.  Thus $u \in U(1)$
acts by $(w,\overline{w}) \mapsto (uw, \overline{uw})$.  In consequence, 

\begin{lemma}\label{ext3}
If $(G,H) \in \{\{G_r,H_r\}\}$ with $H = \R^+\cdot H'$ then 
$\gv_r = \gv_r' \oplus \gv_r''$ direct sum of real vector spaces 
that are eigenspaces of $\R^+$, in other words by the condition that
$e^t \in \R^+$ acts on $\gv_r$ by $v'+v'' \mapsto e^t v'+ e^{-t}v''$.
In particular $\gv'$ and $\gv''$ are totally isotropic, and paired with
each other, for the $Ad(H)$--invariant inner product on $\gv_r$\,.
Consequently that invariant inner product has signature $(p,p)$ where
$p = \dim_\C \gv_r = \tfrac{1}{2}\dim_\R \gv_r$\,.
\end{lemma}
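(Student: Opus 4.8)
The plan is to read the whole statement off the eigenvalue structure of the $\R^+$-action, using nothing beyond invariance of the bilinear form. First I would record the input from the paragraph preceding the lemma: on the complexification $(\gv_r)_\C = \gw_r \oplus \overline{\gw_r}$ the circle $U(1) \subset H_r$ acts by $w \mapsto uw$ on $\gw_r$ and by $\overline w \mapsto \overline u\,\overline w$ on $\overline{\gw_r}$, so the generator $X$ of $\gu(1)$ acts as $+\sqrt{-1}$ on $\gw_r$ and as $-\sqrt{-1}$ on $\overline{\gw_r}$. The hypothesis $H = \R^+\cdot H'$ means, in the Cartan-dual construction of Proposition \ref{family-desc}, that $X$ lies in the $(-1)$-eigenspace $\gh_r^-$ of the defining involution; hence the generator of the noncompact factor of the real form $\gh'$ is $\sqrt{-1}X$, which acts as the real scalar $\sqrt{-1}\cdot\sqrt{-1} = -1$ on $\gw_r$ and as $+1$ on $\overline{\gw_r}$.

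Next I would transport this to the real form $\gv = \gv_r^+ + \sqrt{-1}\gv_r^-$. Since $\sqrt{-1}X$ acts semisimply with the real eigenvalues $\mp 1$, it is diagonalizable over $\R$ on $\gv$, and $e^t \in \R^+$ acts with eigenvalues $e^{\mp t}$. The two real eigenspaces are real forms of $\overline{\gw_r}$ and of $\gw_r$; calling them $\gv_r'$ (eigenvalue $e^t$) and $\gv_r''$ (eigenvalue $e^{-t}$) yields the asserted splitting $\gv = \gv_r' \oplus \gv_r''$ with $e^t(v'+v'') = e^tv' + e^{-t}v''$. Each summand has real dimension $\dim_\C\gw_r = \dim_\C\gv_r = p$, so $\dim_\R\gv = 2p$.

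Finally I would extract the signature purely from the $\Ad(H)$-invariant nondegenerate symmetric form $B$ on $\gv$ provided by Corollary \ref{fam-ps-riem}. For $u,v \in \gv_r'$ invariance gives $B(u,v) = B(e^tu, e^tv) = e^{2t}B(u,v)$ for every $t$, forcing $B|_{\gv_r'} = 0$; the same computation with $e^{-t}$ gives $B|_{\gv_r''} = 0$, so both summands are totally isotropic. The mixed pairing obeys $B(e^tu, e^{-t}v) = B(u,v)$ and is therefore unconstrained; since $B$ is nondegenerate on $\gv = \gv_r'\oplus\gv_r''$ with each summand isotropic, the induced pairing $\gv_r'\times\gv_r''\to\R$ must be perfect. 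By the standard fact that a nondegenerate symmetric form carrying a totally isotropic subspace of half the dimension is split, $B$ then has signature $(p,p)$.

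The one delicate point, and the only place I expect to have to argue rather than compute, is the Cartan-dual bookkeeping of the first paragraph: one must be sure that the outer behavior on the $U(1)$ factor isolated in Lemmas \ref{ext1} and \ref{ext2} is exactly what puts $X$ into $\gh_r^-$, so that the imaginary eigenvalues $\pm\sqrt{-1}$ are replaced by the reciprocal real eigenvalues $e^{\pm t}$. Once that is pinned down, isotropy, the pairing, and the signature $(p,p)$ are all forced by invariance of $B$.
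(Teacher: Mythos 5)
Your proof is correct and follows essentially the same route as the paper, which states Lemma \ref{ext3} as an immediate consequence of the preceding observation that $(\gv_r)_\C = \gw_r \oplus \overline{\gw_r}$ with $U(1)$ acting by $(w,\overline{w}) \mapsto (uw,\overline{uw})$: dualizing the $U(1)$ factor to $\R^+$ turns the eigenvalues $u, \overline{u}$ into $e^t, e^{-t}$, and invariance of the bilinear form then forces the two eigenspaces to be totally isotropic and paired, giving signature $(p,p)$. You have simply made explicit the Cartan-duality bookkeeping (placing the generator $X$ of $\gu(1)$ in $\gh_r^-$, which is exactly what the hypothesis $H = \R^+\cdot H'$ means) and the linear-algebra endgame that the paper leaves implicit.
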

\medskip

\centerline{\bf Spin Representations}

Next, we recall signatures of some spin representations for the groups 
$Spin(k,\ell)$.  

\begin{lemma}\label{spin} {\rm (\cite[Chapter 13]{H1990})}
Real forms of $Spin(7;\C)$ satisfy
$$
Spin(6,1) \subset SO^*(8) \simeq SO(6,2), 
Spin(5,2) \subset SO^*(8) \simeq SO(6,2) 
\text{ and } Spin(4,3) \subset SO(4,4).
$$
Real forms of $Spin(9;\C)$ satisfy
$$
Spin(8,1) \subset SO(8,8), 
Spin(7,2) \subset SO^*(16),
Spin(6,3) \subset SO^*(16) 
\text{ and } Spin(5,4) \subset SO(8,8).
$$
Real forms of $Spin(10;\C)$ satisfy
$$
\begin{aligned}
&Spin(9,1) \subset SL(16;\R), 
Spin(8,2) \subset SU(8,8),
Spin(7,3) \subset SL(4;\H) \subset Sp(4,4), 
Spin(6,4) \subset SU(8,8),\\
& Spin(5,5) \subset SL(16;\R) \subset SO(16,16),
\text{ and } Spin^*(10) \subset SL(4;\H) \subset Sp(4,4). 
\end{aligned}
$$ 
\end{lemma}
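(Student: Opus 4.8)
The statement collects, for each real form $Spin(p,q)$ of the complex spin group, the classical group into which its (half-)spin representation embeds; its content is precisely the determination of the invariant form carried by that representation. The plan is to treat all cases uniformly through the Bott-periodic classification of the real Clifford algebras $Cl(p,q)$, following the spinor formalism of \cite{H1990}.

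First I would fix the modules. Since $7$ and $9$ are odd, $Spin(7;\C)$ and $Spin(9;\C)$ each carry a single irreducible spin module, of complex dimensions $8$ and $16$; since $10$ is even, $Spin(10;\C)$ carries two half-spin modules, each of complex dimension $16$. In every case a real form $Spin(p,q)$ acts on the real or quaternionic points of the module singled out by the classification of $Cl(p,q)$. Second, I would read off the \emph{type} of the commuting algebra of that module --- real, complex, or quaternionic --- from $p-q \bmod 8$. This already fixes the family of the target group: a real orthogonal or symplectic group in the real case, a group preserving a complex Hermitian form such as $SU(a,b)$ in the complex case, and a group with quaternionic structure such as $SO^*(2n)$, $Sp(a,b)$, or $SL(n;\H)$ in the quaternionic case.

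Third, within each family I would locate the invariant form via the charge-conjugation intertwiners: the matrix $C$ with $C\gamma_\mu C^{-1} = \pm\gamma_\mu^{T}$ furnishes the invariant bilinear form, whose symmetry (symmetric versus skew) separates the orthogonal cases from the symplectic ones, while the Hermitian intertwiner $A$ with $A\gamma_\mu A^{-1} = \pm\gamma_\mu^{*}$ furnishes the invariant sesquilinear form governing the unitary and quaternionic cases. The symmetry of $C$ and the presence or absence of a compatible quaternionic structure are again controlled by congruences mod $8$, so type and symmetry are settled by bookkeeping.

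The one ingredient \emph{not} determined purely by these congruences --- and the main obstacle --- is the exact signature of the invariant Hermitian form, i.e. the precise real form of the target group (for instance $SO(6,2)$ as opposed to the split $SO(4,4)$ among the $Spin(7)$ cases, or the distinction between the various $SU(a,b)$). I would compute it by restricting the spin module to a maximal compact subgroup $Spin(p)\times Spin(q)$ modulo its diagonal $\Z_2$; the invariant Hermitian form has a definite sign on each isotypic summand (pairing conjugate summands where the summand is not self-conjugate), so counting the positive against the negative contributions yields the signature directly. Once type, symmetry, and signature are known, each target group is pinned down, and the chained inclusions --- for example $Spin(7,3) \subset SL(4;\H) \subset Sp(4,4)$ and $Spin(5,5) \subset SL(16;\R) \subset SO(16,16)$ --- follow from the standard inclusions among classical groups sharing a compatible invariant structure. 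The exceptional isomorphism $SO^*(8) \simeq SO(6,2)$ on the right is an instance of $D_4$ triality and is checked at the level of Lie algebras.
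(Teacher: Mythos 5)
Your outline is, in substance, the argument of the source that the paper itself relies on: the paper gives no proof of this lemma at all, quoting it from \cite[Chapter 13]{H1990}, and Harvey's treatment is exactly your route --- the Bott-periodic classification of $Cl(p,q)$, the type of the spin module read off from $p-q \bmod 8$, invariant bilinear/sesquilinear forms from the intertwiners, and signatures settled case by case. For the entries of the form $Spin(p,q)$ your plan is sound; in particular your maximal-compact device correctly produces $(4,4)$ for $Spin(4,3)$, $(8,8)$ for $Spin(8,1)$, $Spin(5,4)$, $Spin(8,2)$ and $Spin(6,4)$, and it even explains the ``formless'' entries: restricted to $Spin(9)$ (resp.\ $Spin(5)\times Spin(5)$) the half-spin module of $Spin(9,1)$ (resp.\ $Spin(5,5)$) remains irreducible, so any invariant form would be definite, which is impossible for a noncompact group; hence $SL(16;\R)$.

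There are, however, two genuine gaps. First, $Spin^*(10)$ occurs in the statement and is not of the form $Spin(p,q)$: it does not come from a real quadratic form, so your entire apparatus --- $Cl(p,q)$, the congruence $p-q \bmod 8$, restriction to $Spin(p)\times Spin(q)$ --- never touches it. It requires a separate argument (its maximal compact is a double cover of $U(5)$, its Cartan involution is inner, and one must compare $\overline{S^\pm}$ with $(S^\pm)^*$ for this real form). Second, your step three is stated too loosely for the $Spin(10;\C)$ row: since $D_5$ has $(S^\pm)^* \cong S^\mp$, the charge-conjugation matrix $C$ intertwines the two half-spin modules rather than furnishing a form on either one, i.e.\ $C$ anticommutes with the chirality element here. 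Applied blindly to a single half-spin module, your recipe would assign orthogonal or symplectic groups to $(9,1)$ and $(5,5)$, where in fact no invariant bilinear form exists; handled correctly, this non-self-duality is precisely what forces the $SL$- and $SU$-type answers in that row. Related to this, a careful execution of your method for $Spin(7,3)$ yields a quaternionic half-spin module $\H^8$ carrying \emph{no} invariant sesquilinear form (again because $(S^+)^* \cong S^-$), so you should reconcile your computation against the chain $SL(4;\H) \subset Sp(4,4)$ asserted in the statement, whose middle term acts on a space of only half the required dimension, before declaring that case closed.
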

\medskip

\centerline{$\mathbf{E_6}$}

Issues involving $E_6$ are more  delicate.
If $L$ is a connected reductive Lie group, let $\varphi_{L,b}$
denote the fundamental representation corresponding to the
$b^{th}$ simple root in Bourbaki order, $\varphi_{L,0}$
denote the trivial $1$--dimensional representation, and write $\tau$ for
the defining $1$--dimensional representation of $U(1)$.  Then
$\varphi_{E_6,6}|_{C_4} = \varphi_{C_4,2}$,
$\varphi_{E_6,6}|_{F_4} = \varphi_{F_4,4}\oplus \varphi_{F_4,0}$ and
$\varphi_{E_6,6}|_{A_5A_1} = (\varphi_{A_5,5}\otimes \varphi_{A_1,1})
	\oplus (\varphi_{A_5,2}\otimes \varphi_{A_1,0})$. 
These $C_4$ and $F_4$ restrictions are real.
As $\varphi_{E_6,6}(H)$ is noncompact, and there are only one or two 
summands under its maximal compact subgroup, we conclude
that $\varphi_{E_6,6}(E_{6,C_4}) \subset SO(27,27)$,
$\varphi_{E_6,6}(E_{6,F_4}) \subset SO(26,1)$ and
$\varphi_{E_6,6}(E_{6,A_5A_1}) \subset SU(15,12)$.     

$\varphi_{E_6,6}|_{D_5T_1}$ has three summands, 
$(\varphi_{D_5,0}\otimes \tau^{-1})
\oplus (\varphi_{D_5,4}\otimes \tau^{-1})
\oplus (\varphi_{D_5,1}\otimes \tau^{2})$, of respective degrees
$1$, $16$ and $10$, so the above argument must be supplemented.
For that, we look at $\varphi_{E_6,6}|_{L_r}$ where ${L_r} \cong SU(3)$
is a certain subgroup of $E_6$\,.

Write $\xi_b$ for the $b^{th}$ fundamental highest weight of $A_2$\,.
Thus $A_2$ has adjoint representation $\alpha := \varphi_{A_2, \xi_1+\xi_2}$
Denote $\beta:= \varphi_{A_2, 2\xi_1 + 2\xi_2}$, so the symmetric square
$S^2(\alpha) = \varphi_{A_2,0} \oplus \alpha \oplus \beta$.  Then
\cite[Theorem 16.1]{D1952} $E_6$ has a subgroup ${L_r}\cong SU(3)$ such that
$\varphi_{E_6,6}|_{L_r} = \beta$.  Further
\cite[Table 24]{D1952} ${L_r}$ is invariant under the outer automorphism of
$E_6$ that interchanges $\varphi_{E_6,6}$ with its dual $\varphi_{E_6,1}$\,.
The representation of $H_r$ on $\gv_r$ treats $\gv_r$ as the unique
$(\varphi_{E_6,6}\oplus \varphi_{E_6,1})$--invariant of $\gv_\C$\,, so it
is the invariant real form for
$\varphi_{E_6,6}({L_r})\oplus \varphi_{E_6,1}({L_r})$\,.
Thus the representation of $H = E_{6,D_5T_1}$ on $\gv$ treats $\gv$ as the
invariant real form of $\gv_\C$ for
$\varphi_{E_6,6}(L)\oplus \varphi_{E_6,1}(L)$
where $L = (L_r)_\C \cap H$.  $L$ must be one of the real forms $SU(1,2)$ or
$SL(3;\R)$ of $(L_r)_\C = SL(3;\C)$.  Now
$S^2(\alpha)$ has signature $(20,16)$ or $(21,15)$.  Subtracting
$\alpha$ from $S^2(\alpha)$ leaves signature $(16,12)$, and subtracting 
$\varphi_{A_2,0}$ (for the Killing form of $L$) leaves signature $(15,12)$ 
or $(16,11)$.  But this
must come from the summands of $\varphi_{E_6,6}|_{D_5T_1}$, which have
degrees $1$, $16$ and $10$.  Thus $\varphi_{E_6,6}(E_{6,D_5T_1})
\subset SU(16,11)$.
\medskip

\centerline{\bf Split Quaternion Algebra}

Another delicate matter concerns the split real quaternion algebra
$\H_{sp}$\,.  While $\H_{sp} \cong \R^{2\times 2}$, the conjugation of
$\H_{sp}$ over $\R$ is given by 
$\overline{\bigl ( \begin{smallmatrix} a & b \\ c & d \end{smallmatrix} \bigr )}
= \bigl ( \begin{smallmatrix} d & -b \\ -c & a \end{smallmatrix} \bigr )$.
Thus $\Im\H_{sp}^{n\times n}$ has real dimension $2n^2+n$ and is isomorphic
to the Lie algebra of $Sp(n;\R)$, and 
$\Re\H_{sp}^{n\times n}$ has real dimension $2n^2-n$.  In Case 9 of Table
\ref{max-irred} below, we can have $\gv = \H^n_{sp}$ with 
$\gz = \Re\H_{sp,0}^{n\times n} \oplus \Im\H_{sp}$\,, where 
$H = \{1,U(1),\R^+\}Sp(n;\R)$.  Then the bracket $\gv \times \gv \to \gz$
has two somewhat different pieces.  The obvious one is $\gv \times \gv \to
\Im\H_{sp}$\,, given by $[u,v] = \Im u\overline{v}$\,.  For the more subtle one
we note $\H^n \simeq \C^{2n}$ as a $\C^*\cdot Sp(n;\C)$--module, 
$\gz = [\gv,\gv] \subset \Lambda^2_\C(\C^{2n})$, and 
$\dim_\R \Re\H_{sp}^{n\times n} = 2n^2-n = \dim_\C(\Lambda^2_\C(\C^{2n}))$,
so $\Re\H_{sp}^{n\times n}$ is an $Sp(n;\R)$--invariant real form of 
$\Lambda^2_\C (\C^{2n})$.  Thus 
$\Re\H_{sp}^{n\times n} \simeq \Lambda^2_\R(\R^{2n})$
as an $Sp(n;\R)$--module.  If $\omega$ denotes the $Sp(n;\R)$--invariant
antisymmetric bilinear form on $\R^{2n}$ then $\langle u\wedge v,
u'\wedge v'\rangle = \omega(u,u')\omega(v,v')$ defines the 
$Sp(n;\R)$--invariant symmetric bilinear form on 
$\Lambda^2_\R(\R^{2n})$.  The assertions in Case 9 of Table
\ref{max-irred} follow.
\medskip

\centerline{$\mathbf{SL(n/2;\H)}$ {\bf and} {$\mathbf{GL(n/2;\H)}$}}

The real forms $SL(n/2;\H)$ of $SU(n)$ and $GL(n/2;\H)$ of $U(n)$ can
appear or not, in an interesting way.

\begin{lemma}\label{no-glH}
Let $(G,H) \in \{\{G_r,H_r\}\}$.
Suppose that $H_r = U(n)$ or $H_r = SU(n)$, and that $\gv_r = \C^n$. Then 
$H \ne GL(n/2;\H)$ and $H \ne SL(n/2;\H)$.
\end{lemma}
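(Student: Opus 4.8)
The plan is to combine Proposition \ref{family-desc} and Theorem \ref{extends} to turn the assertion into a statement about intertwining operators on $\gv_r = \C^n$, and then to produce a sign obstruction. A member $(G,H)$ of $\{\{G_r,H_r\}\}$ with the prescribed $H$ exists exactly when the involutive automorphism $\theta$ of $H_r$ attached to $H$ extends to an automorphism $\sigma$ of $G_r$ with $d\sigma(\gv_r) = \gv_r$ and $\sigma^2 = 1$; if no such involutive $\sigma$ can be chosen, then $H$ does not occur. Since $\Ad(H_r)$ is irreducible on the single summand $\gv_r$, set $\tau := d\sigma|_{\gv_r}$ and let $\alpha$ be the standard representation of $H_r$. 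The automorphism condition reads $\tau\,\alpha(h) = \alpha(\theta(h))\,\tau$ for all $h \in H_r$, and involutivity of $\sigma$ forces $\tau^2 = 1$ on $\gv_r$. Thus I must show that, for the $\theta$ giving $GL(n/2;\H)$ or $SL(n/2;\H)$, no real-linear $\tau$ obeying the intertwining relation can satisfy $\tau^2 = 1$.

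First I would record that these two real forms are precisely the ones attached to $\theta(h) = J\bar h J^{-1}$, where $J$ is the standard skew form with $J^2 = -I$; equivalently $\alpha(\theta(h)) = j\,\alpha(h)\,j^{-1}$ for the quaternionic structure $j(v) = J\bar v$ on $\C^n$, which satisfies $j^2 = -1$. Next I would show that $\tau$ must be $\C$-antilinear. For $H_r = U(n)$ this is immediate from the central circle: taking $h = \zeta I$ with $\zeta \in U(1)$ gives $\alpha(\theta(h)) = \bar\zeta I$, so $\tau(\zeta v) = \bar\zeta\,\tau(v)$, which is antilinearity and kills any $\C$-linear part. For $H_r = SU(n)$ with $n \geq 3$ the same follows because $\C^n$ is of complex type, so its commutant is $\C$ and it admits no nonzero antilinear self-intertwiner, whence the $\C$-linear part of $\tau$ would intertwine $\alpha$ with the inequivalent conjugate representation and must vanish.

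Once $\tau$ is antilinear, $j^{-1}\tau$ is $\C$-linear and commutes with the irreducible family $\alpha(H_r)$, so Schur's lemma gives $\tau = \lambda j$ for some $\lambda \in \C$ with $\lambda \ne 0$. Then
\[
\tau^2 = (\lambda j)(\lambda j) = \lambda\bar\lambda\, j^2 = -\,|\lambda|^2\, I ,
\]
a negative scalar, so $\tau^2 = 1$ is impossible. The only freedom in the choice of $\tau$ is to replace it by $c\tau$ with $c \ne 0$ (two intertwiners differ by an element of the commutant $\C$), and then $\tau^2$ is merely rescaled to $-|c|^2|\lambda|^2 I$, still negative. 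Hence $\theta$ has no involutive extension preserving $\gv_r$, and by Theorem \ref{extends} neither $GL(n/2;\H)$ nor $SL(n/2;\H)$ can arise as $H$, which is the claim.

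I expect the main obstacle to be conceptual rather than computational: the crux is the sign in $\tau^2 = -|\lambda|^2 I$, which expresses the dichotomy between a real structure ($\tau^2 = +1$, producing $SL(n;\R)$) and a quaternionic structure ($\tau^2 = -1$, attached to $SU^*(n)$); an antilinear operator implementing the quaternionic involution simply cannot itself be an involution. The point requiring care is the justification that $\tau$ is forced to be $\C$-antilinear, i.e. the complex-type input for $SU(n)$ (so one keeps $n \geq 3$ in the $SU$ case, the residual $SU(2)$ case being the compact $SL(1;\H) \cong SU(2)$ already present as the Riemannian member), together with the remark that rescaling the extending operator cannot remove the negative sign.
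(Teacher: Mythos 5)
Your proof is correct and is essentially the paper's own argument: both reduce the question, via Proposition \ref{family-desc} and Theorem \ref{extends}, to the fact that any extension of $\theta$ must act on $\gv_r$ as a scalar multiple of the quaternionic structure $x \mapsto J\overline{x}$ (the paper derives $B = cJ$ from the observation that $J^{-1}B$ centralizes $U(n)$, you derive $\tau = \lambda j$ from Schur's lemma --- the same point), and that the square of such a map is $-|c|^2 I \ne I$. The only refinement is that you explicitly justify the antilinearity of the intertwiner (via the central circle for $U(n)$, and non-self-conjugacy of $\C^n$ for $SU(n)$, $n \geqq 3$, noting the degenerate $SU(2)$ case), a step the paper takes for granted by writing the extension in the form $(x,g) \mapsto (B\overline{x}, J\overline{g}J^{-1})$.
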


\begin{proof}
Let $H_r$ be $U(n)$ or $SU(n)$ and $\gv_r = \C^n$.
Suppose that $H$ is $GL(n/2;\H)$ or $SL(n/2;\H)$.  Then
the corresponding involutive automorphism $\theta$ of $H_r$ is given
by $\theta(g) = J\overline{g}J^{-1}$ where $J = \diag\{J',\dots,J'\}$
with $J' = (\begin{smallmatrix} 0 & 1 \\ -1 & 0\end{smallmatrix})$.  
Then one extension of $\theta$ to $G_r = N_r \rtimes H_r$ is given 
on $G_r/Z_r \simeq \gv_r\rtimes H_r$ by 
$\alpha(x,g) = (J\overline{x},J\overline{g}J^{-1})$.  This extension
is not involutive: $\sigma^2(x,1) = (-x,1)$.  However, since
$H$ is $GL(n/2;\H)$ or $SL(n/2;\H)$, $\theta$ has an involutive
extension $\beta$ to $G_r$\,.  Thus 
$\beta(x,g) = (B\overline{x},J\overline{g}J^{-1})$ for some $B \in U(n)$.
We compare $\alpha$ and $\beta$.

Calculate $\beta(x,g)\beta(x',g') = (B\overline{x},J\overline{g}J^{-1})
(B\overline{x'},J\overline{g'}J^{-1})  
= (B\overline{x} + J\overline{g}J^{-1}(B\overline{x'}), J\overline{gg'}J^{-1})$
and $\beta((x,g)(x',g')) = \beta(x+g(x'),gg') =
(B\overline{x} + B\overline{gx'},J\overline{gg'}J^{-1})$.  Since $\beta$
is an automorphism this says $J\overline{g}J^{-1}B = B\overline{g}$, in
other words $\overline{g}\cdot J^{-1}B = J^{-1}B \cdot \overline{g}$.
Thus $J^{-1}B$ is a central element of $U(n)$, in other words
$B = cJ$ with $c \in \C\,, |c| = 1$.  As $\beta$ is involutive we calculate
$(x,g) = \beta^2(x,g) = \beta(B\overline{x}, J\overline{g}J^{-1})
= (B\,\overline{B\overline{x}},J\,\overline{J\overline{g}J^{-1}}\,J^{-1})
= (B\overline{B}x,g)$.  Thus $I = B\overline{B} = (cJ)(\overline{cJ})
= |c|^2J^2 = -I$.  That contradicts 
Theorem \ref{extends}, and the Lemma follows.
\end{proof}

\begin{remark}\label{s2-l2-ok}{\rm
Let $(G,H) \in \{\{G_r,H_r\}\}$ with $H_r = U(n) \text{ or } SU(n)$.
Suppose that $H$ is $GL(n/2;\H)$ or $SL(n/2;\H)$ as defined by the 
involutive automorphism $\theta$ of $H_r$\,.  Then 
$\theta(g) = J\overline{g}J^{-1}$ as noted in the proof of Corollary 
\ref{no-glH}.  By contrast here, if $\gv_r$ is a subspace of 
$\bigotimes^2(\C^n)$ then $\theta$ does extend to an involutive
automorphism of $G_r$\,.  That extension is given on $\gv_r$ by
$\alpha(x\otimes y,g) = 
(J\overline{x}\otimes J\overline{y},J\overline{g}J^{-1})$\,, and
on $\gz_r$ as a subspace of $\Lambda^2_\R(\gv_r)$.  
The point here is that $\alpha^2(x\otimes y,g)
= (J\overline{J\overline{x}}\otimes J\overline{J\overline{y}},
J\overline{J\overline{g}J^{-1}}J^{-1} = (J^2x\otimes J^2y,g)
= ((-x)\otimes(-y),g) = (x\otimes y,g)$.}\hfill$\diamondsuit$
\end{remark}

\begin{remark}\label{some-prods-ok}{\rm
Following the idea of Remark \ref{s2-l2-ok}, suppose that $H_r$ is locally
isomorphic to a product, say $H_r = H_r'\cdot H_r''$ with 
$H_r'' = U(n) \text{ or } SU(n)$, and that $\gv_r = \gv_r'\otimes\gv_r''$
accordingly with $\gv_r'' = \C^n$.  Suppose $\theta = \theta'\otimes\theta''$
so that $H$ splits the same way.  Let $\sigma'$ denote the extension of
$\theta'$ to $\gv_r'$\,.  If $\sigma'^2 = 1$ then $H''$ cannot be 
$GL(n/2;\H)$ or $SL(n/2;\H)$.  For example this says that $H$ cannot
be $SL(m;\R)\times SL(n/2;\H)$. But if $\sigma'^2 = -1$ and 
$\gv_r = \C^m\otimes\C^n$, one must consider the possibility
that $H$ be $SL(m/2;\H)\cdot SL(n/2;\H)$.}\hfill$\diamondsuit$
\end{remark}
\medskip

\centerline{$\mathbf{U(1)\cdot H_r''}$}

\begin{remark}\label{some-prods-maybe}{\rm
A small variation the argument of Remark \ref{some-prods-ok} has a useful
application to some more of the the cases 
$H_r = H_r''\cdot H_r'''$ where $H_r''$ has form  $U(1)\cdot H_r'$\,.
If $\theta|_{H_r''}$ has form $g \mapsto JgJ^{-1}$ with $J^2 = -I$,
so that the extension $x \mapsto Jx$ to $\gv_r$ has square $-I$, we
can replace $J$ by $iJ$; then the extension $x \mapsto iJx$ is involutive.
If $\theta|_{H_r''}$ has form $g \mapsto J\overline{g}J^{-1}$ with $J$ real
and $J^2 = -I$, then this fails, for if $c \in U(1)$
the extension $x \mapsto cJ\overline{x}$ has square -I, thus is not involutive.
That could be balanced if extension of $\theta|_{H_r'''}$ also
has square $-I$.
}\hfill$\diamondsuit$
\end{remark}
\medskip

\centerline{$\mathbf{SO^*(2\ell)}$ {\bf and } $\mathbf{Sp(n;\R)}$}

The analog of Lemma \ref{no-glH} (or at least the analog of the proof) 
for the groups $SO^*(2\ell)$ is

\begin{lemma}\label{no-so*}
Let $(G,H) \in \{\{G_r,H_r\}\}$ with $H_r = SO(2\ell)$ and $\gv_r = \R^{2\ell}$.
Then $H \ne SO^*(2\ell)$.
\end{lemma}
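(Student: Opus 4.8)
The plan is to imitate the proof of Lemma \ref{no-glH}, replacing the complex conjugation that appears there by the observation that here $H_r = SO(2\ell)$ is already a real matrix group, so the relevant involution is inner. First I would record that if $H = SO^*(2\ell)$, then the corresponding involutive automorphism $\theta$ of $H_r = SO(2\ell)$ is $\theta(g) = JgJ^{-1}$, where $J = \diag\{J',\dots,J'\}$ with $J' = \bigl(\begin{smallmatrix} 0 & 1 \\ -1 & 0\end{smallmatrix}\bigr)$, so that $J^2 = -I$ and $J \in SO(2\ell)$ (indeed $\det J = 1$). This is the Cartan involution cutting out the real form $SO^*(2\ell)$ of $SO(2\ell;\C)$; its $+1$--eigenspace on $\mathfrak{so}(2\ell)$ is the centralizer $\mathfrak{u}(\ell)$ of $J$, confirming the real form. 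Note that, in contrast to the $g \mapsto J\overline g J^{-1}$ of Lemma \ref{no-glH}, here $\theta$ is inner and no conjugation enters.

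Next, by Proposition \ref{family-desc} the pair $(G,H)$ corresponds to an involutive automorphism $\sigma$ of $G_r$ restricting to $\theta$ on $H_r$, and by Remark \ref{vh-zvh} it suffices to work on $\gv_r \rtimes H_r$; we may assume $\sigma(\gv_r) = \gv_r$ as in Theorem \ref{extends}. Writing $T := \sigma|_{\gv_r} \in GL(\gv_r)$ and letting $\rho$ be the defining representation of $SO(2\ell)$ on $\gv_r = \R^{2\ell}$, the requirement that $\sigma$ be an automorphism extending $\theta$ becomes the intertwining relation $T\rho(g) = \rho(\theta(g))\,T = J\rho(g)J^{-1}T$ for all $g \in H_r$; equivalently $J^{-1}T$ commutes with $\rho(H_r)$. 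The heart of the argument is then that $\rho$ is absolutely irreducible of real type, so its $\R$--commuting algebra is $\R\cdot I$; hence $J^{-1}T = rI$ for some $r \in \R$, with $r \ne 0$ since $T$ is invertible. Thus $T = rJ$ and $T^2 = r^2 J^2 = -r^2 I \ne I$, so $\sigma^2$ cannot be the identity on $\gv_r$. This contradicts $\sigma^2 = 1$, and therefore $H \ne SO^*(2\ell)$.

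The main obstacle I anticipate is not the computation but making the two structural reductions airtight: first, correctly identifying $\theta$ as the inner involution $\operatorname{Ad}(J)$ (so that the extension to $\gv_r$ is genuinely $\R$--linear and the intertwiner is literally $J$ up to scalar), and second, justifying that the commuting algebra of $\rho$ is exactly $\R$. The latter is the real--type statement for the vector representation of $SO(2\ell)$ on $\R^{2\ell}$, valid for $\ell \geqq 2$ (for which $\R^{2\ell}\otimes\C$ stays irreducible under $SO(2\ell;\C)$); this is precisely what forbids any $T$ with $T^2 = +I$, and it is the analogue of the step $B\overline B = -I$ in Lemma \ref{no-glH}. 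Once these two points are secured, the contradiction $T^2 = -r^2 I$ is immediate, and thanks to Remark \ref{vh-zvh} no separate analysis on $\gz_r$ is required.
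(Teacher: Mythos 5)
Your proof is correct and follows essentially the same route as the paper's: identify $\theta = \Ad(J)$, note that any extension of $\theta$ to $\gv_r$ must intertwine the standard representation with its $\theta$--twist, conclude it is a real multiple of $J$, and derive the contradiction from the square being a negative multiple of the identity. Your invocation of Schur's lemma (commutant $=\R\cdot I$ by absolute irreducibility, giving $T = rJ$) is in fact a slightly more precise rendering of the paper's step asserting that $J^{-1}B$ is ``central in $SO(2\ell)$'' and hence $\pm I$, but the substance of the two arguments is identical.
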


\begin{proof}
Suppose $H = SO^*(2\ell)$.  Then it is the centralizer in $SO(2\ell)$ of
$J = (\begin{smallmatrix} 0 & I \\ -I & 0 \end{smallmatrix})$, and
$\theta(g) = JgJ^{-1}$.  Thus $\theta$ extends to an automorphism $\alpha$
of $G_r$ given on $G_r/Z_r \simeq \gv_r\rtimes H_r$ by 
$\alpha(x,g) = (Jx,JgJ^{-1})$.
Note that $\alpha^2$ is $-1$ on $\gv_r$\,.  Now let $\beta$ be an involutive
extension of $\theta$, so $\beta(x,g) = (Bx,JgJ^{-1})$ with $B^2 = I$.
Compute $\beta(x,g)\beta(x',g') = (Bx+JgJ^{-1}Bx',Jgg'J^{-1})$ and
$\beta( (x,g)(x',g') ) = \beta(x+gx', gg') = (Bx+Bgx',Jgg'J^{-1})$,
so $JgJ^{-1}Bx' = Bgx'$ and it follows that $J^{-1}B$ is central in
$SO(2\ell)$.  Thus either $J^{-1}B = I$ so $J = B$ contradicting
$J^2 = -I = -B^2$, or $J^{-1}B = -I$ so $J = -B$ contradicting
$J^2 = -I = -B^2$.  That contradicts $\beta^2 = 1$, and the Lemma follows.
\end{proof}

The arguments of Lemmas \ref{no-glH} and \ref{no-so*} go through without 
change for $Sp(n;\R)$:

\begin{lemma}\label{no-spnr}
Let $(G,H) \in \{\{G_r,H_r\}\}$ with $H_r = Sp(n)$ and $\gv_r = \C^{2n}$.
Then $H \ne Sp(n;\R)$.
\end{lemma}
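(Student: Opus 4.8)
My plan is to follow the proof of Lemma~\ref{no-so*} essentially verbatim, the only change being that $H_r=Sp(n)$ acts on the complex space $\gv_r=\C^{2n}$. First I would identify the involution: $Sp(n;\R)$ is the real form attached to the inner involution $\theta(g)=JgJ^{-1}$ of $Sp(n)$, where $J=(\begin{smallmatrix} 0 & I\\ -I & 0\end{smallmatrix})$ is the standard symplectic matrix, so $J\in Sp(n)$ and $J^2=-I$, and the $\theta$-fixed subgroup is the maximal compact $U(n)$ of $Sp(n;\R)$. By Theorem~\ref{extends} this $\theta$ extends to an automorphism $\alpha$ of $G_r$, given on $G_r/Z_r\simeq\gv_r\rtimes H_r$ by $\alpha(x,g)=(Jx,\,JgJ^{-1})$; since $J^2=-I$ we have $\alpha^2=-I$ on $\gv_r$, so $\alpha$ is not involutive, exactly as in the $SO^*(2\ell)$ case.

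Next I would assume for contradiction that an involutive extension $\beta(x,g)=(Bx,\,JgJ^{-1})$ with $B^2=I$ exists. Expanding $\beta((x,g)(x',g'))=\beta(x,g)\beta(x',g')$ gives $JgJ^{-1}Bx'=Bgx'$ for all $g\in Sp(n)$ and $x'\in\gv_r$, so $J^{-1}B$ commutes with the entire defining action of $Sp(n)$. If one can upgrade this to the statement that $J^{-1}B$ is a \emph{central} element of $Sp(n)$, then $J^{-1}B=\pm I$, hence $B=\pm J$ and $B^2=J^2=-I\neq I$, contradicting $B^2=I$ and so $\beta^2=1$.

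The one genuinely new point---where this differs from Lemma~\ref{no-so*}---is showing that $J^{-1}B$ lies in the center $\{\pm I\}$ rather than merely in the commutant of the representation. For $SO(2\ell)$ on the real space $\R^{2\ell}$ the defining representation is absolutely irreducible, its commutant is $\R$, and $J^{-1}B=\pm I$ follows immediately; but $Sp(n)$ on $\C^{2n}$ is of quaternionic type, so a priori the real commutant is $\H$ and equivariance alone does not suffice. I expect this to be the crux. To handle it I would use that $\beta$ preserves the bracket $\gv_r\times\gv_r\to\gz_r$, which here is built from the $Sp(n)$-invariant symplectic form on $\gv_r$; hence $B$ preserves that form up to the scalar by which $\beta$ acts on the corresponding summand of $\gz_r$, and the delicate step is to force that scalar to be $+1$, so that $B\in Sp(n)$. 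Granting $B\in Sp(n)$, the commutant meets $Sp(n)$ only in the center $\{\pm I\}$, and the contradiction above runs as in the orthogonal case; the facts $J\in Sp(n)$ and $B\in Sp(n)$ are the plain symplectic analogues of the computations in Lemma~\ref{no-so*}.
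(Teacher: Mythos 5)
Your diagnosis of where the proof of Lemma \ref{no-so*} fails to transfer is exactly right, and it is precisely the step that the paper's own proof compresses into the unsupported phrase ``we do a computation to see that $J^{-1}B$ is central in $Sp(n)$.''  The relation $JgJ^{-1}B = Bg$ only places $J^{-1}B$ in the commutant of $Sp(n)$ acting on $\gv_r = \C^{2n}$, and that commutant is $\C$ for complex-linear maps and $\H$ for real-linear ones, not $\R$ as in the absolutely irreducible $SO(2\ell)$ case.  So you have correctly located the crux, which the paper passes over without argument.

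However, the repair you propose cannot be carried out, and the gap it leaves is fatal to the entire strategy.  Take $B = iJ$, so that $J^{-1}B = iI$ is a non-central element of the commutant.  Then $B$ is complex-linear and unitary, $B^2 = i^2J^2 = I$, and $\theta(g)B = (JgJ^{-1})(iJ) = iJg = Bg$ for every $g \in Sp(n)$; thus $\beta(x,g) = (iJx,\,JgJ^{-1})$ is an involutive extension of $\theta$ to $\gv_r \rtimes H_r$, hence by Remark \ref{vh-zvh} to $G_r$.  Your bracket criterion does not exclude it: in the Heisenberg case the bracket is $[u,v] = \Im\langle u,v\rangle$, built from the \emph{Hermitian} form rather than the complex symplectic form, and since $iJ$ is unitary one gets $[Bu,Bv] = [u,v]$ exactly --- the scalar you hoped to pin down is already $+1$.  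Nevertheless $B \notin Sp(n)$, because $B$ is anti-symplectic for the complex symplectic form, $\omega(iJu,iJv) = -\omega(u,v)$; preserving $\Im\langle\cdot,\cdot\rangle$ only places $B$ in the full real symplectic group of that form, which is far larger than $Sp(n)$.  So the implication ``scalar $=+1 \Rightarrow B \in Sp(n)$'' on which your plan hinges is false.  Worse, since $B = iJ$ satisfies every constraint in play (equivariance, $B^2 = I$, bracket preservation), no argument from these constraints alone can produce the desired contradiction; within the framework of Theorem \ref{extends} and Proposition \ref{family-desc} this $B$ is exactly the ``replace $J$ by $iJ$'' device of Remark \ref{some-prods-maybe}, and nothing there requires a $U(1)$ factor in $H_r$, because an intertwiner need not lie in the image of $H_r$.  (For $n=1$ this is consistent with Case 1 of Table \ref{heis-comm}: $Sp(1) = SU(2)$ and $Sp(1;\R) \cong SU(1,1)$, which is listed there.)  The upshot: the step you yourself flagged as delicate is not merely delicate but impossible as formulated, the same objection applies verbatim to the paper's proof of this lemma, and any correct proof of the statement would have to invoke some restriction beyond those stated in Theorem \ref{extends} and Remark \ref{vh-zvh}.
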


\begin{proof} $Sp(n;\R)$ is defined by the involution 
$\theta: g \mapsto JgJ^{-1}$ of $Sp(n)$ with fixed point set $U(n)$.
As before, $\theta$ extends to an automorphism $\alpha$ on $G_r$
given on $G_r/Z_r \simeq \gv_r\rtimes H_r$ by 
$\alpha(x,g) = (Jx,JgJ^{-1})$, where
$J = (\begin{smallmatrix} 0 & I \\ -I & 0 \end{smallmatrix})$, we let
$\beta$ be an involutive extension of $\theta$, and note 
$\beta(x,g) = (Bx,JgJ^{-1})$ with $B^2 = I$.  We do a computation to see that 
$J^{-1}B$ is central in $Sp(n)$, so it is $\pm I$, and $J = \pm B$.  Either
choice of sign contradicts $B^2 = I$, and the Lemma follows.
\end{proof}

\begin{remark}\label{some-prods}{\rm
Consider cases where the semisimple part of $H_r$ is of the form
$H'_r\cdot H''_r$ with $H''_r = Sp(n)$.  Let $\theta = \theta' \times \theta''$
define the real form $H'' = Sp(n;\R)$ corresponding to $\theta''$.
Consider an extension $\beta$ of $\theta$ to $G_r$, given by
$\beta(x'\otimes x'',g) = (B'x'\otimes B''x'',\theta g)$ on $\gv_r\times H_r$\,.
The argument of Lemma \ref{no-spnr} shows $B''^2 = -I$, so $\beta$ cannot be
involutive unless either $B'^2 = -I$ as well, or $H_r$ has a $U(1)$ factor so 
that we can replace $B'$ by a scalar multiple with square $-I$.  For example, 
in Table \ref{max-irred},

In Case 9: $H \ne Sp(n;\R)$ and $H \ne \R^+\cdot Sp(n;\R)$. 

In Case 17: $H \ne Sp(1)\times Sp(n;\R)$ and $H \ne Sp(1;\R)\otimes Sp(r,s)$.

In Case 18:
$H \notin \{Sp(2)\times Sp(n;\R), Sp(2;\R)\times Sp(r,s), 
Sp(1,1)\times Sp(n;\R)\}$.  

In Case 21: $H\notin \{\{1,\R^+\}
(Sp(2;\R)\times SL(n;\R)), (Sp(2;\R)\times SU(r,s)\}$.  

In Case 22: $H\ne GL(2;\R)\times Sp(n;\R)$.

In Case 23: $H\ne GL(3;\R)\times Sp(n;\R)$.

\noindent Similar methods and restrictions apply to Table \ref{indecomp}.
} \hfill $\diamondsuit$
\end{remark}

\centerline{{\bf Signature of Products}}

\begin{lemma}\label{more-prods}
Suppose that $H_r$ is irreducible on $\gv_r$ and that 
$H_r = H'_r\cdot H''_r$ with $\theta = \theta' \times \theta''$.
Suppose further that there is an involutive extension of $\theta$
to $G_r$\,, resulting in $(G,H) \in \{\{(G_r,H_r\}\}$ with $H = H'\cdot H''$.
Further, $\gv$ has form $\gv'\otimes\gv''$ with action of $H$ of the form 
$\alpha' \otimes \alpha''$, with invariant $\R$--bilinear forms $b'$ and $b''$.
The $H$--invariant symmetric $\R$--bilinear form on $\gv$ is 
$b := b' \otimes b''$. 
If one of $b', b''$ is antisymmetric, so is the other, and $b$ has
signature $(t,t)$ where $2t = \dim_\R(\gv)$.  If one of $b', b''$ has
signature of the form $(u,u)$ then also $b$ has
signature $(t,t)$ where $2t = \dim_\R(\gv)$.
More generally, if $b'$ is symmetric with signature $(k,\ell)$
and $b''$ is symmetric with signature $(r,s)$ then $b$ has signature
$(kr + \ell s, ks + \ell r)$.
\end{lemma}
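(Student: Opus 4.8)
The plan is to reduce everything to elementary linear algebra on the tensor product $b = b'\otimes b''$, handling the symmetric and antisymmetric cases by their respective normal forms. Throughout I may use that $b$ is nondegenerate (being the invariant pseudo--Riemannian form on $\gv$ produced by Corollary \ref{fam-ps-riem}), and that a tensor product of nondegenerate forms is nondegenerate.

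First I would record the symmetry bookkeeping. Writing $\varepsilon', \varepsilon'' \in \{\pm 1\}$ for the symmetry signs of $b'$ and $b''$ ($+1$ symmetric, $-1$ antisymmetric), the defining identity $(b'\otimes b'')(x\otimes y,\, x'\otimes y') = b'(x,x')\, b''(y,y')$ shows that interchanging the two arguments multiplies the value by $\varepsilon'\varepsilon''$. Since $b$ is the invariant \emph{symmetric} form on $\gv$, we must have $\varepsilon'\varepsilon'' = 1$; hence $b'$ and $b''$ are either both symmetric or both antisymmetric. In particular, if one is antisymmetric so is the other, which is the first assertion of the lemma.

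Next I would treat the symmetric--symmetric case, which yields the general signature formula and the signature-$(u,u)$ statement as a special case. Diagonalize $b'$ in a basis $\{e_i'\}$ of $\gv'$ with $b'(e_i',e_i') = +1$ for $k$ indices and $-1$ for $\ell$ indices, and likewise diagonalize $b''$ in a basis $\{e_j''\}$ with $r$ plus signs and $s$ minus signs. The tensor basis $\{e_i'\otimes e_j''\}$ then diagonalizes $b$, with $b(e_i'\otimes e_j'',\, e_i'\otimes e_j'') = b'(e_i',e_i')\,b''(e_j'',e_j'')$. A diagonal entry is $+1$ exactly when the two factors carry equal sign, giving $kr + \ell s$ positive entries, and $-1$ otherwise, giving $ks + \ell r$ negative entries. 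This is the claimed signature $(kr+\ell s,\ ks+\ell r)$. Setting $k = \ell = u$ collapses it to $(u(r+s),\, u(r+s)) = (t,t)$ with $2t = 2u(r+s) = \dim_\R\gv$, which is the signature-$(u,u)$ assertion.

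Finally I would handle the antisymmetric--antisymmetric case, the one point where the diagonalization above does not apply and which I expect to be the only genuine obstacle. Here $b'$ and $b''$ are nondegenerate symplectic forms, so $\dim_\R\gv' = 2p$ and $\dim_\R\gv'' = 2q$ are even, and $\gv''$ contains a Lagrangian (maximal totally isotropic) subspace $L''$ of dimension $q$. Setting $W = \gv'\otimes L''$, the formula $b(x\otimes y,\, x'\otimes y') = b'(x,x')\,b''(y,y')$ vanishes whenever $y,y'\in L''$, so $W$ is totally isotropic for $b$ with $\dim_\R W = 2p\cdot q = \tfrac12\dim_\R\gv$. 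A nondegenerate symmetric form admitting a totally isotropic subspace of exactly half its dimension is split, so $b$ has signature $(t,t)$ with $2t = \dim_\R\gv$. Equivalently, one may compute the eigenvalues of the symmetric matrix $J'\otimes J''$ representing $b$ from the eigenvalues $\pm\sqrt{-1}$ of the symplectic matrices $J'$ and $J''$, obtaining $+1$ and $-1$ each with multiplicity $2pq$. This completes all three assertions.
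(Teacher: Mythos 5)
Your proof is correct. On the symmetric--symmetric side it is essentially the paper's own argument --- both of you diagonalize $b'$ and $b''$ and count signs of the products $b'(e_i',e_i')\,b''(e_j'',e_j'')$ --- except that you establish the general formula $(kr+\ell s,\, ks+\ell r)$ first and obtain the $(u,u)$ statement as the special case $k=\ell=u$, whereas the paper proves the $(u,u)$ case directly and remarks that the general formula follows by the same computation; your symmetry bookkeeping $\varepsilon'\varepsilon''=1$ is likewise the paper's opening observation that $b$ symmetric forces $b'$ and $b''$ to have the same parity. The genuine divergence is in the antisymmetric--antisymmetric case. The paper splits each factor into a pair of dual Lagrangians, $\gv'=\gv_1'\oplus\gv_2'$ and $\gv''=\gv_1''\oplus\gv_2''$, chooses dual bases, and asserts that $(\gv_1'\otimes\gv_1'')\oplus(\gv_2'\otimes\gv_2'')$ is positive definite and $(\gv_1'\otimes\gv_2'')\oplus(\gv_2'\otimes\gv_1'')$ negative definite; you instead exhibit the single totally isotropic subspace $\gv'\otimes L''$ of dimension $\tfrac12\dim_\R\gv$ and invoke the Witt-index fact that a nondegenerate symmetric form with such a subspace is split. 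Your route is shorter and, as it happens, more careful: the paper's intermediate claim is not literally correct, since in the chosen bases the Gram matrix of $b$ on $(\gv_1'\otimes\gv_1'')\oplus(\gv_2'\otimes\gv_2'')$ is $\bigl(\begin{smallmatrix} 0&I\\ I&0\end{smallmatrix}\bigr)$, which is hyperbolic rather than definite; each of the paper's two subspaces is in fact split, so the conclusion $(t,t)$ survives, but your argument needs no such repair. The one point you take as given that the paper actually derives is the opening structural claim: irreducibility of $H_r$ on $\gv_r$ is what forces the action to factor as $\alpha'\otimes\alpha''$ on $\gv=\gv'\otimes\gv''$, whence $b=b'\otimes b''$; this is a one-line appeal to the fact that an irreducible representation of a product group is an outer tensor product, but strictly it is part of what the lemma asserts rather than a hypothesis.
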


\begin{proof} Since $H_r$ is irreducible on $\gv_r$ its action there has
form $\alpha'_r\otimes\alpha''_r$\,.  Thus the action of $H$ on $\gv$ has
form $\alpha'\otimes\alpha''$, and the $H$--invariant symmetric $\R$--bilinear
form on $\gv$ is $b := b' \otimes b''$ as asserted.  

If $b'$ is
antisymmetric, then $b''$ is antisymmetric also, because $b$ is symmetric.
Then $\gv' = \gv'_1 \oplus \gv'_2$ where $b'(\gv'_i,\gv'_i) = 0$ and
$b'$ pairs $\gv'_1$ with $\gv'_2$\,, and $\gv'' = \gv''_1 \oplus \gv''_2$
similarly.  Choose bases $\{e'_{1,i}\}$ of $\gv'_1$\,, $\{e'_{2,j}\}$ of 
$\gv'_2$\,, $\{e''_{1,u}\}$ of $\gv''_1$ and $\{e''_{2,v}\}$ of $\gv''_2$
such that $b'(e'_{1,i}, e'_{2,j}) = \delta_{i,j}$ and
$b''(e''_{1,u}, e''_{2,v}) = \delta_{u,v}$\,.  Here of course
$\dim \gv'_1 = \dim \gv'_2$ and $\dim \gv''_1 = \dim \gv''_2$\,.  
Then $(\gv'_1\otimes \gv''_1)
\oplus (\gv'_2\otimes \gv''_2)$ is positive definite for $b$,
$(\gv'_1\otimes \gv''_2) \oplus (\gv'_2\otimes \gv''_1)$ is negative
definite for $b$, and the two are $b$--orthogonal and of equal dimension.
That proves the first assertion on signature.

Now suppose that $b'$ and $b''$ are symmetric, that $b'$ has signature
of the form $(u,u)$, and that $b''$ has signature of the form $(v,w)$.
Then $\gv' = \gv'_1 \oplus \gv'_2$ into orthogonal positive definite and
negative definite sumands, similarly $\gv'' = \gv''_1 \oplus \gv''_2$\,,
$\dim \gv'_1 = \dim \gv'_2 = u$, $\dim \gv''_1 = v$ and
$\dim \gv''_2 = w$.  So the corresponding decomposition of $\gv$ is
$\gv = \gv_1\oplus\gv_2$ where
$\gv_1 = (\gv'_1 \otimes \gv''_1) + (\gv'_2 \otimes \gv''_2)$ and
$\gv_2 = (\gv'_1 \otimes \gv''_2) + (\gv'_2 \otimes \gv''_1)$.
Thus $\dim \gv_1 = uv + uw = \dim \gv_2$\,.
That proves the second assertion on signature.
The third signature assertion follows by the same calculation.
\end{proof}

\begin{lemma}\label{sp-sp-so} We have inclusions 
{\rm (i)} $Sp(m;\R)\cdot Sp(n;\R) \subset SO(4mn,4mn)$, 
{\rm (ii)} $Sp(m;\R) \subset SO(2m,2m)$, 
{\rm (iii)} $Sp(m;\R)\cdot U(r,s) \subset SO(4mn,4mn), n=r+s$,
{\rm (iv)} $Sp(m;\R)\cdot SO(r,s) \subset SO(4mn,4mn), n=r+s$ and 
{\rm (v)} $Sp(m;\R)\cdot Sp(r,s) \subset SO(8mn,8mn), n=r+s$.
\end{lemma}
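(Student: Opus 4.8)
The plan is to derive all five inclusions from Lemma~\ref{more-prods}, whose signature bookkeeping does all the work once the invariant bilinear form on each tensor factor is identified. The organizing principle is that the factor $Sp(m;\R)$ is the engine: on its standard module $\R^{2m}$ it preserves the nondegenerate \emph{antisymmetric} symplectic form $\omega$, and this antisymmetry (or, after the realification that appears in the tables, a balanced \emph{symmetric} form) is exactly what forces the output signature to be balanced. Concretely, I expect every case to yield an $H$--invariant symmetric form of signature $(t,t)$ with $2t=\dim_\R\gv$, so that the only genuine content is the evaluation of $t$.

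First I would settle (ii) as the base case, since there is no partner factor. I realize the relevant module as $\gv=\R^{2m}\otimes\R^2$, with the auxiliary $\R^2$ carrying an antisymmetric form $\epsilon$, and take the $Sp(m;\R)$--invariant symmetric form $b=\omega\otimes\epsilon$; by the antisymmetric clause of Lemma~\ref{more-prods} this has signature $(2m,2m)$, which is the assertion $Sp(m;\R)\subset SO(2m,2m)$. This step also fixes the two admissible shapes of the $Sp(m;\R)$ factor---the antisymmetric $\omega$ on $\R^{2m}$, or the balanced symmetric $\omega\otimes\epsilon$ on $\R^{4m}$---to be fed into the remaining cases.

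For (i), (iii), (iv) and (v) I would write $\gv=\gv'\otimes\gv''$ with $Sp(m;\R)$ acting on $\gv'$, read off an invariant form $b''$ on the partner module $\gv''$, and apply Lemma~\ref{more-prods} to $b=b'\otimes b''$. The partner forms are what I must pin down: $Sp(n;\R)$ carries the antisymmetric $\omega_n$ on $\R^{2n}$ (case (i)); $U(r,s)$ on $\C^n$ carries both the \emph{symmetric} real part of its Hermitian form, of signature $(2r,2s)$, and the \emph{antisymmetric} imaginary part (case (iii)); $Sp(r,s)$ on $\H^n$ carries the symmetric real part of signature $(4r,4s)$ together with the antisymmetric imaginary ($i,j,k$) parts, which I would analyze exactly as in the split--quaternion discussion above (case (v)); and $SO(r,s)$ on $\R^n$ carries only the symmetric form of signature $(r,s)$ (case (iv)). In each case I match the symmetry type of the partner form to that of the $Sp(m;\R)$ factor so that $b=b'\otimes b''$ comes out symmetric, whereupon the first clause (both antisymmetric) or the second clause (one factor of balanced signature) of Lemma~\ref{more-prods} delivers a balanced signature $(t,t)$; the third clause then pins down $t$ once the dimension is counted.

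The main obstacle will be twofold. First, case (iv): since $SO(r,s)$ offers no invariant \emph{antisymmetric} form, I cannot pair it with the antisymmetric $\omega$ of $Sp(m;\R)$, and must instead use the balanced symmetric form of (ii) on the $Sp(m;\R)$ side (an already doubled, dimension $4m$, module) together with the second clause of Lemma~\ref{more-prods}. Second, and running through all four product cases, is the dimension bookkeeping: one must correctly account for the complex structures carried by $\gv$ that realify to double the naive tensor dimension (and, in (iv), the compounded doubling just described), so that the ambient orthogonal group acquires precisely the stated rank $4mn$, $4mn$, $4mn$, $8mn$. Once the modules in Table~\ref{max-irred}, their forms, and these realifications are identified, the signature computations are immediate applications of Lemma~\ref{more-prods}, and no further idea is required.
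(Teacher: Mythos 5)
Your strategy---identify an invariant bilinear form on each tensor factor, match symmetry types so that the product form is symmetric, and let Lemma~\ref{more-prods} do the signature count---is the paper's own strategy for (i), (ii) and (iii). In particular your $\omega\otimes\epsilon$ on $\R^{2m}\otimes\R^{2}$ is literally the paper's proof of (ii) (the diagonal action on two paired symplectic spaces $\R^{2m}\oplus\R^{2m}$), and your antisymmetric pairing for (iii) matches the paper's $b'\otimes b''$ with $b''(u,v)=\Im\langle u,v\rangle$ on $\C^{r,s}$. The genuine divergence is at (iv) and (v): the paper does no further computation there, deducing (iv) from (iii) via $SO(r,s)\subset U(r,s)$ and (v) from (iii) via $Sp(r,s)\subset U(2r,2s)$, whereas you propose to redo each case directly. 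That choice is exactly where your ``bookkeeping'' obstacle bites, and it is more than bookkeeping as you have set it up. In (iv), tensoring the balanced form of (ii) on $\R^{4m}$ with the signature-$(r,s)$ form on $\R^{r,s}$ produces a form on a $4mn$-dimensional space and hence only $Sp(m;\R)\cdot SO(r,s)\subset SO(2mn,2mn)$; the missing factor of two cannot come from ``complex structures carried by $\gv$,'' because $\R^{r,s}$ carries none---you must complexify the $SO(r,s)$-module to $\C^{r,s}$, which is precisely what the paper's inclusion $SO(r,s)\subset U(r,s)$ builds in. Similarly in (v), both of your candidate pairings are symmetric but live on spaces of different dimensions: $\omega$ on $\R^{2m}$ against an imaginary part of the quaternionic Hermitian form gives only $SO(4mn,4mn)$, and it is the doubled symmetric pairing ($(2m,2m)$ against the real part, of signature $(4r,4s)$, total dimension $16mn$) that gives the stated $SO(8mn,8mn)$---so ``match the symmetry type'' alone does not pin down the target group. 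Both repairs are mechanical, so your plan does go through, but the paper's reduction of (iv) and (v) to (iii) buys immunity from exactly these dimension traps, at the cost only of noticing the two subgroup inclusions.
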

\begin{proof}
The first of these is immediate from the proof of Lemma \ref{more-prods}.  
For {\rm (ii)} view $Sp(m;\R)$ as the diagonal action on 
two paired real symplectic vector spaces of dimension $2m$, for example on
$\R^{4m} = \R^{2m} \oplus \R^{2m}$ or on $\R^{2m} \oplus \sqrt{-1}\R^{2m}$.
For {\rm (iii)}, we have the antisymmetric $\C$--bilinear $b'$ on $\C^{2m}$
and the antisymmetric $\R$--bilinear form $b''(u,v) = \Im\langle u,v\rangle$
on $\C^{r,s}$, so $b' \otimes b''$ is a symmetric bilinear form of signature
$(4mn,4mn)$ on the real vector space underlying $\C^{2m}\otimes_\R \C^{r,s}$,
and the assertion follows as in Lemma \ref{more-prods}.  Then
{\rm (iv)} follows because $SO(r,s) \subset U(r,s)$ and {\rm (v)}  follows 
because $Sp(r,s) \subset U(2r,2s)$.
\end{proof}

\section{General Heisenberg Nilmanifolds}\label{sec4}
\setcounter{equation}{0}

We recall the basic facts on commutative nilmanifolds $M_r = G_r/H_r$\,,
where $G_r = N_r \rtimes H_r$ and $N_r$ is the Heisenberg group 
$\Im\C + \C^n$\,.

\begin{proposition}\label{heis-riem}{\rm (\cite[Theorem 4.6]{BJR1990}
or see \cite[Theorem 13.2.4]{W2007})}
Let $N_r$ denote the Heisenberg group $\Im\C + \C^n$ of dimension $2n+1$,
as in {\rm Example \ref{heis}}.  Let $H_r$ be a closed connected subgroup of
$U(n)$ acting irreducibly on $\C^n$.  Then the following are equivalent.

{\rm 1. } $M_r = G_r/H_r$ is commutative,  where $G_r = N_r\rtimes H_r$\,.

{\rm 2. } The representation of $H_r$ on $\C^n$ is multiplicity free on
the ring of polynomials on $\C^n$\,.

{\rm 3. } The representation of $H_r$ on $\C^n$ is equivalent to one of
the following.

\begin{longtable}{|r|l|l|l|l|}\hline
\endhead
\hline \multicolumn{5}{r}{\textit{table continued on next page $\dots$}} \\
\endfoot
\hline
\endlastfoot

 & Group $H_r$ & Group $(H_r)_\C$ & acting on & conditions on $n$ \\ \hline
{\rm 1} & $SU(n)$ & $SL(n;\C)$ & $\C^n$ & $n \geqq 2$ \\ \hline
{\rm 2} & $U(n)$ & $GL(n;\C)$ & $\C^n$ & $n \geqq 1$ \\ \hline
{\rm 3} & $Sp(m)$ & $Sp(m;\C)$ & $\C^n$ & $n = 2m$ \\ \hline
{\rm 4} & $U(1) \cdot Sp(m)$ & $\C^* \times Sp(m;\C)$ & $\C^n$ & $n = 2m$ \\ \hline
{\rm 5} & $U(1) \cdot SO(n)$ & $\C^* \times SO(n;\C)$ & $\C^n$ & $n \geqq 2$
         \\ \hline
{\rm 6} & $U(m)$ & $GL(m;\C)$ & $S^2(\C^m)$ & $m \geqq 2, \
        n = \tfrac{1}{2}m(m+1)$  \\ \hline
{\rm 7} & $SU(m)$ & $SL(m;\C)$ & $\Lambda^2(\C^m)$ & $m$ odd,
        $n = \tfrac{1}{2}m(m-1)$  \\ \hline
{\rm 8} & $U(m)$ & $GL(m;\C)$ & $\Lambda^2(\C^m)$ &
        $n = \tfrac{1}{2}m(m-1)$  \\ \hline
{\rm 9} & $SU(\ell) \cdot SU(m)$ & $SL(\ell;\C) \times SL(m;\C)$ &
        $\C^\ell \otimes \C^m$ & $n = \ell m, \ \ell \ne m$ \\ \hline
{\rm 10} & $U(\ell) \cdot SU(m)$ & $GL(\ell;\C) \times SL(m;\C)$ &
        $\C^\ell \otimes \C^m$ & $n = \ell m$ \\ \hline
{\rm 11} & $U(2) \cdot Sp(m)$ & $GL(2;\C) \times Sp(m;\C)$ &
        $\C^2 \otimes \C^{2m}$ & $n = 4m$ \\ \hline
{\rm 12} & $SU(3) \cdot Sp(m)$ & $SL(3;\C) \times Sp(m;\C)$ &
        $\C^3 \otimes \C^{2m}$ & $n = 6m$ \\ \hline
{\rm 13} & $U(3) \cdot Sp(m)$ & $GL(3;\C) \times Sp(m;\C)$ &
        $\C^3 \otimes \C^{2m}$ & $n = 6m$ \\ \hline
{\rm 14} & $U(4) \cdot Sp(4)$ & $GL(4;\C) \times Sp(4;\C)$ &
        $\C^4 \otimes \C^8$ & $n = 32$ \\ \hline
{\rm 15} & $SU(m) \cdot Sp(4)$ & $SL(m;\C) \times Sp(4;\C)$ &
        $\C^m \otimes \C^8$ & $n = 8m, \ m \geqq 3$ \\ \hline
{\rm 16} & $U(m) \cdot Sp(4)$ & $GL(m;\C) \times Sp(4;\C)$ &
        $\C^m \otimes \C^8$ & $n = 8m, \ m \geqq 3$ \\ \hline
{\rm 17} & $U(1) \cdot Spin(7)$ & $\C^* \times Spin(7;\C)$ & $\C^8$ &
        $n = 8$ \\ \hline
{\rm 18} & $U(1) \cdot Spin(9)$ & $\C^* \times Spin(9;\C)$ & $\C^{16}$ &
        $n = 16$ \\ \hline
{\rm 19} & $Spin(10)$ & $Spin(10;\C)$ & $\C^{16}$ & $n = 16$ \\ \hline
{\rm 20} & $U(1) \cdot Spin(10)$ & $\C^* \times Spin(10;\C)$ &
        $\C^{16}$ & $n = 16$ \\ \hline
{\rm 21} & $U(1) \cdot G_2$ & $\C^* \times G_{2,\C}$ & $\C^7$ & $n=7$ \\ \hline
{\rm 22} & $U(1) \cdot E_6$ & $\C^* \times E_{6,\C}$ & $\C^{27}$ & $n=27$ \\ \hline
\end{longtable}
\end{proposition}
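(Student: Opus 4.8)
The plan is to prove the three conditions equivalent by first translating commutativity of $G_r/H_r$ into a statement of harmonic analysis on the Heisenberg group, then identifying that statement with multiplicity--freeness on polynomials, and finally carrying out the representation--theoretic classification that produces the table. For the first step I would invoke the Stone--von Neumann theorem: the irreducible unitary representations of $N_r = \Im\C + \C^n$ with nonzero central character $\lambda$ are all unitarily equivalent to a single model $\pi_\lambda$ on the Fock space $\cF$ of holomorphic functions on $\C^n$, and $\cF$ is graded as an $H_r$--module by $\cF = \bigoplus_{d \geqq 0} S^d(\C^n)$, i.e.\ the polynomial ring $\cP$ on $\C^n$. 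Because $H_r \subset U(n)$ acts by unitary substitution, its oscillator action on $\cF$ intertwines with $\pi_\lambda$, so $\pi_\lambda$ extends to a representation of $G_r = N_r \rtimes H_r$ and the $H_r$--module structure of $\pi_\lambda|_{H_r}$ is exactly that of $\cP$, independently of the choice of $\lambda \ne 0$.

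Next I would establish the equivalence of (1) and (2) through the standard Gelfand--pair criterion. By general theory (see \cite[Chapter 13]{W2007}), $G_r/H_r$ is commutative precisely when each irreducible unitary representation of $G_r$ has at most a one--dimensional space of $H_r$--fixed vectors, and by reciprocity this is controlled by the multiplicities with which the irreducibles of $H_r$ occur in the $\pi_\lambda$. The representations with $\lambda = 0$ factor through $N_r/[N_r,N_r] = \C^n$ and contribute only the irreducible $H_r$--action there, so they present no obstruction once $H_r$ is irreducible on $\C^n$. Thus commutativity reduces to the requirement that $\pi_\lambda|_{H_r}$ be multiplicity free for $\lambda \ne 0$, which by the previous paragraph is exactly the assertion that $H_r$ act multiplicity freely on the polynomial ring $\cP$. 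This is statement (2), and conversely multiplicity--freeness returns commutativity by the same chain of equivalences.

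The substantive content lies in proving (2) $\Leftrightarrow$ (3), the actual classification. Here I would argue that an irreducible $H_r \subset U(n)$ whose polynomial ring is multiplicity free is severely constrained: multiplicity--freeness of $\cP$ forces the ring of $H_r$--invariants $\cP^{H_r}$ to be a polynomial algebra (a coregularity condition) and, more sharply, is equivalent to the generic $H_r$--orbits on $\C^n$ being spherical, placing $(H_r,\C^n)$ among the classified irreducible multiplicity--free (equivalently, visible or coregular, in the sense of Kac) representations. I would then prune that list to the cases where every symmetric power $S^d(\C^n)$, and not merely the invariants, decomposes without repetition, checking each surviving candidate by an explicit plethysm or branching computation; the outcome is precisely the twenty--two families tabulated, matching the Vinberg and Benson--Jenkins--Ratcliff lists (\cite{V2001}, \cite{BJR1990}).

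I expect this classification step to be the main obstacle. The reductions (1) $\Leftrightarrow$ (2) are formal once the Stone--von Neumann model is in place, but verifying multiplicity--freeness of the \emph{entire} graded ring $\cP$ for each candidate is delicate: the tensor--product families (entries 9--16) and the spin and exceptional entries (17--22) require nontrivial decompositions of $S^d$ of a tensor or spin module, and one must also confirm that the near--misses (for instance $SO(n)$ without the $U(1)$ factor, or $SU(m)$ on $\Lambda^2(\C^m)$ with $m$ even) genuinely fail multiplicity--freeness so that the stated arithmetic conditions on $n$ are sharp. Organizing these verifications uniformly, rather than one family at a time, is where the real work resides.
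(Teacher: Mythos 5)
The paper gives no proof of this proposition at all --- it is quoted from Benson--Jenkins--Ratcliff \cite[Theorem 4.6]{BJR1990} (see also \cite[Theorem 13.2.4]{W2007}) --- and your outline reconstructs exactly the argument of those sources: the Stone--von Neumann/Fock model identifies $\pi_\lambda|_{H_r}$ with the action on the polynomial ring, Mackey theory together with Carcano's criterion \cite{C1987} gives $(1)\Leftrightarrow(2)$, and $(2)\Leftrightarrow(3)$ is the classification of irreducible multiplicity--free linear actions due to Kac, which is precisely the table. So your approach is essentially the same as the one the paper relies on by citation; the one point to tighten is that ``coregular'' and ``visible'' are strictly weaker than multiplicity--free (sphericity means a Borel subgroup of $(H_r)_\C$ has an open orbit on $\C^n$, not a condition on generic $H_r$--orbits), so the classification step should invoke Kac's multiplicity--free list directly rather than prune a coregular list.
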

\noindent
In each case, $G_r/H_r$ is a weakly symmetric Riemannian manifolds; see
\cite[Theorem 15.4.7]{W2007}.

Now consider the corresponding real form families.  Following
Theorems \ref{spec-rad} and \ref{extends} we need only enumerate the real
forms $H$ of the groups $(H_r)_\C$ listed in Proposition \ref{heis-riem}.
All of them are weakly symmetric.  The only ones of Lorentz signature come from
the Riemannian cases ($H = H_r$ compact) by changing the sign of the
metric on the center $\gz$ of $\gn$.
In all cases $H$ acts trivially on $\gz$, which has dimension $1$, and
$\gv = \gv_r$ with the action of $H$ given by the restriction of the
action of $(H_r)_\C$\,.  Note that the action of $H$ on $\gv$ is irreducible
except in a few cases, such as $H = SL(n;\R)$ in Case 1, where
$\gv = \R^n \oplus \R^n$ under the action of $H$.  

In general we need and
use the tools from Sections \ref{sec2} and \ref{sec3}.
The discussion of $U(1)$ factors shows that many potential cases do not
occur.  The discussions of linear groups and signature of product groups
also eliminate many potential cases.  

Later, in Section \ref{sec5}, we will consider real form families in the 
non--Heisenberg cases.  In view of the length of the classification in 
the Heisenberg cases, we will limit our considerations in the 
non--Heisenberg setting to cases where $H_r$ is maximal in the 
following sense.  If $G_r' = N_r \ltimes H'_r$ with $G'_r/H'_r$ weakly 
symmetric and  $H_r \subset H'_r$, then $H_r = H_r$ (and so $G_r = G'_r$). 

We run through the real form families corresponding to the entries of 
the table in Proposition \ref{heis-riem}.  We use the notation $k + \ell = m$ 
and $r+s=n$ where applicable, and if we write e.g. $m/2$ for some case,
usually $GL(m/2;\H)$, then it is assumed that $m$ is even for that case.
The notation $\{L_1, \dots , L_p\}$ means any one of the $L_i$\,,
as in $\{1, U(1), \R^+\}\cdot H'$.  Our convention on possible
invariant signatures is that $(a,b)$ represents both possibilities $(a,b)$
and $(b,a)$, that $(a,b)\oplus (c,d)$ represents all four possibilities
$(a+c,b+d), (a+d,b+c), (b+d,a+c) \text{ and } (b+c,a+d)$, etc.

\underline{Cases 1 and 2.}  Lemma \ref{no-glH} shows $H \ne SL(n/2;\H)$ and
$H \ne GL(n/2;\H)$.  The signatures are obvious.

\underline{Case 3.} Lemma \ref{no-spnr} shows $H \ne Sp(m;\R)$.  
The signatures are obvious.

\underline{Case 4.}  Lemma \ref{ext2} covers the other possibilities.
The signatures come from Lemma \ref{ext3} and Remark \ref{some-prods-maybe}.

\underline{Case 5.}  The argument of Lemma \ref{no-so*} combines with the 
adjustment described in Remark \ref{some-prods-maybe} to cover the case
$H = U(1)\cdot SO^*(n)$, $n$ even.

\underline{Cases 6, 7 and 8.}  Remark \ref{s2-l2-ok} shows that the linear
groups do occur here.  The signatures for the general linear groups
come from Lemma \ref{ext3}, and they follow for the special linear groups.

\underline{Cases 9 through 16.}  The only difficulties here are with the
linear groups and the groups $Sp(m;\R)$, and for those we apply Remark 
\ref{some-prods} and Lemma \ref{more-prods}.

\underline{Cases 17 through 20.}  Here we apply Lemma \ref{spin},
Remark \ref{some-prods-ok}, and the fact that the centers
$Z(Spin(7))\cong\Z_2 \cong Z(Spin(9))$ and $Z(Spin(10)) \cong \Z_4$\,, 

\underline{Case 21.} This uses the classification of real forms of $G_{2,\C}$
and the fact that the compact simply connected $G_2$ is centerless and has 
no outer automorphisms.

\underline{Case 22.} This uses the classification of real forms of $E_{6,\C}$
and the fact that the compact simply connected $E_6$ has center $\Z_3$
and that the outer automorphisms of $E_6$ act on that center by
$z \mapsto z^{-1}$.

Omitting the obvious cases $(G_r)_\C/(H_r)_\C$\,, where $\gv$ and $\gz$ have
signatures of the form $(n,n)$ and $(1,1)$, now

\addtocounter{equation}{1}
{\footnotesize
\begin{longtable}{|r|l|l|l|} 
\caption*{\bf {\normalsize Table} \thetable \quad {\normalsize 
        Irreducible Commutative Heisenberg Nilmanifolds
        $(N_r\rtimes H_r)/H_r$}} \label{heis-comm} \\
\hline
 & Group $H$ & $\gv \text{ and signature}(\gv)$&
                $\gz\text{ and signature}(\gz)$  \\ \hline
\hline
\hline
\endfirsthead
\multicolumn{4}{l}{{\normalsize \textit{Table \thetable\, continued from 
        previous page $ \dots$}}} \\
\hline
 & Group $H$ & $\gv \text{ and signature}(\gv)$&
                $\gz\text{ and signature}(\gz)$  \\ \hline
\hline
\endhead
\hline \multicolumn{4}{r}{{\normalsize \textit{$\dots$ Table \thetable\, 
        continued on next page}}} \\
\endfoot
\hline
\endlastfoot
\hline
{\rm 1}  & $SU(n), n \geqq 2$ & $\C^n,\,\,\, (2n,0)$ & $\Im\C,\,\,\, (1,0)$ 
	\\ \hline
   & $SU(r,s)$  &  $\C^{r,s}, \,\,\, (2r,2s)$ & $\Im\C,\,\,\, (1,0)$
	\\ \hhline{|~|-|-|-|}
   & $SL(n;\R)$ &  $\R^{n,n},\,\,\,  (n,n)$   & $\Im\C,\,\,\, (1,0)$
	\\ \hline \hline
{\rm 2}  & $U(n), n \geqq 1$ &$\C^n,\,\,\, (2n,0)$ & $\Im\C,\,\,\, (1,0)$ 
	\\ \hline
   & $U(r,s)$ &  $\C^{r,s}, \,\,\, (2r,2s)$ & $\Im\C,\,\,\, (1,0)$
        \\ \hhline{|~|-|-|-|}
   & $GL(n;\R)$ &  $\R^{n,n},\,\,\,  (n,n)$   & $\Im\C,\,\,\, (1,0)$
	\\ \hline\hline
{\rm 3}  & $Sp(m)$ & $\C^{2m},\,\,\, (4m,0)$ & $\Im\C,\,\,\, (1,0)$ 
	\\ \hline
   & $Sp(k,\ell)$ & $\C^{2k,2\ell},\,\,\, (4k,4\ell)$ & $\Im\C,\,\,\, (1,0)$   
	\\ \hline\hline
{\rm 4} & $U(1) \cdot Sp(m)$ & $\C^{2m},\,\,\, (4m,0)$ & $\Im\C,\,\,\, (1,0)$
	\\ \hline
   & $\{U(1),\R^+\}\cdot Sp(k,\ell)$ & $\C^{2k,2\ell},\,\,\, (4k,4\ell)$ 
	& $\Im\C,\,\,\, (1,0)$   \\ \hhline{|~|-|-|-|}
   & $U(1) \cdot Sp(m;\R)$ & $\C^{m,m},\,\,\, (2m,2m)$ & $\Im\C,\,\,\, (1,0)$ 
	\\ \hline\hline
{\rm 5} & $SO(2) \cdot SO(n), n\geqq 2$ & $\R^{2\times n},\,\,\, (2n,0)$ 
	&  $\Im\C,\,\,\, (1,0)$ \\ \hline
   & $SO(2)\cdot SO(r,s)$ & $\R^{2\times(r,s)},\,\,\, (2r,2s)$
	&  $\Im\C,\,\,\, (1,0)$ \\ \hhline{|~|-|-|-|}
   & $SO(1,1)\cdot SO(r,s)$ & $\R^{(1,1)\times(r,s)},\,\,\, (n,n)$
	&  $\Im\C,\,\,\, (1,0)$ \\ \hhline{|~|-|-|-|}
   & $U(1)\cdot SO^*(n), n$ even & $\C^n \simeq \R^{n,n},\,\,\, (n,n)$ & $\Im\C,\,\,\, (1,0)$ 
	\\ \hline\hline
{\rm 6} & $U(m), m\geqq 2$ & $S^2_\C(\C^m),\,\,\, (m^2+m,0)$ & $\Im\C,\,\,\, (1,0)$
	\\ \hline
   & $U(k,\ell)$ & $S_\C^2(\C^{k,\ell}),\,\,\, (k^2+k+\ell^2+\ell,2k\ell))$ 
	& $\Im\C,\,\,\, (1,0)$ \\ \hhline{|~|-|-|-|}
   & $GL(m;\R)$ & $S_\C^2(\C^m) \simeq \R^{1,1}\otimes S^2_\R(\R^m),\,\,\, (\tfrac{m^2+m}{2},\tfrac{m^2+m}{2})$ 
	& $\Im\C,\,\,\, (1,0)$ \\ \hhline{|~|-|-|-|}
   & $GL(m/2;\H)$ &  $S_\C^2(\C^m) \simeq \R^{1,1}\otimes S^2_\R(\R^m),\,\,\, (\tfrac{m^2+m}{2},\tfrac{m^2+m}{2})$
	& $\Im\C,\,\,\, (1,0)$
         \\ \hline\hline
{\rm 7} & $SU(m), m$ odd & $\Lambda_\C^2(\C^m),\,\,\, (m^2-m,0)$ 
	& $\Im\C,\,\,\, (1,0)$ \\ \hline
   & $SU(k,\ell)$ & $\Lambda_\C^2(\C^{k,\ell}),\,\,\,(k^2-k+\ell^2-\ell,2k\ell)$
	& $\Im\C,\,\,\, (1,0)$ \\ \hhline{|~|-|-|-|}
   & $SL(m;\R)$ & $\Lambda_\C^2(\C^m)\simeq \R^{1,1}\otimes\Lambda_\R(\R^m),\,\,\,(\tfrac{m^2-m}{2},\tfrac{m^2-m}{2})$
         & $\Im\C,\,\,\, (1,0)$ \\ \hline\hline
{\rm 8} & $U(m)$ &  $\Lambda_\C^2(\C^m),\,\,\, (m^2-m,0)$ 
	& $\Im\C,\,\,\, (1,0)$ \\ \hline
   & $U(k,\ell)$ & $\Lambda^2_\C(\C^{k,\ell}),\,\,\,(k^2-k+\ell^2-\ell,2k\ell))$
        & $\Im\C,\,\,\, (1,0)$ \\ \hhline{|~|-|-|-|}
   & $GL(m;\R)$ & $\Lambda_\C^2(\C^m)\simeq \R^{1,1}\otimes\Lambda_\R(\R^m),\,\,\,(\tfrac{m^2-m}{2},\tfrac{m^2-m}{2})$
        & $\Im\C,\,\,\, (1,0)$ \\ \hhline{|~|-|-|-|}
   & $GL(m/2;\H)$ & $\Lambda_\C^2(\C^m)\simeq \R^{1,1}\otimes\Lambda_\R(\R^m),\,\,\, 
	(\tfrac{m^2-m}{2},\tfrac{m^2-m}{2})$ & $\Im\C,\,\,\, (1,0)$
         \\ \hline\hline
{\rm 9} & $SU(m) \cdot SU(n)$ & $\C^{m \times n},\,\,\, (2mn,0)$ 
	& $\Im\C,\,\,\, (1,0)$ \\ \hline
   & $SU(k,\ell) \cdot SU(r,s)$ & $\C^{(k,\ell) \times (r,s)},\,\,\,
	(2kr+2\ell s, 2ks + 2\ell r)$ & $\Im\C,\,\,\,(1,0)$\\ \hhline{|~|-|-|-|}
   & $SL(m;\R)\cdot SL(n;\R)$ & $\R^{m \times n} \oplus \R^{m \times n},
	\,\,\, (mn,mn)$ & $\Im\C,\,\,\, (1,0)$ \\ \hhline{|~|-|-|-|}
   & $SL(\tfrac{m}{2};\H) \cdot SL(\tfrac{n}{2};\H)$ & $\C^{m \times n},\,\,\, 
	(mn,mn)$ & $\Im\C,\,\,\, (1,0)$ \\ \hline\hline
{\rm 10} & $S(U(m) \cdot U(n))$ & $\C^{m \times n},\,\,\, (2mn,0)$ 
	& $\Im\C,\,\,\, (1,0)$ \\ \hline
   & $S(U(k,\ell) \cdot U(r,s))$ & $\C^{(k,\ell) \times (r,s)},\,\,\,
        (2kr+2\ell s, 2ks + 2\ell r)$ & $\Im\C,\,\,\,(1,0)$\\ \hhline{|~|-|-|-|}
   & $S(GL(m;\R)\cdot GL(n;\R))$ & $\R^{m \times n} \oplus \R^{m \times n},
        \,\,\, (mn,mn)$ & $\Im\C,\,\,\, (1,0)$ \\ \hhline{|~|-|-|-|}
   & $S(GL(\tfrac{m}{2};\H) \cdot GL(\tfrac{n}{2};\H))$ 
	& $\C^{m \times n},\,\,\, 
        (mn,mn)$ & $\Im\C,\,\,\, (1,0)$ \\ \hline\hline
{\rm 11} & $U(2) \cdot Sp(m)$ & $\C^2 \otimes_\C \C^{2m},\,\,\, (8m,0)$ 
	& $\Im\C,\,\,\, (1,0)$ \\ \hline
   & $U(a,b) \cdot Sp(k,\ell), \begin{smallmatrix} a+b=2\\k+\ell = m
	\end{smallmatrix}$ & $\C^{a,b} \otimes_\C \C^{2k,2\ell},\,\,\,
	(4ak+4b\ell,4a\ell +4bk)$ & $\Im\C,\,\,\,(1,0)$\\ \hhline{|~|-|-|-|}
   & $U(a,b)\cdot Sp(m;\R), a+b=2$ & $\C^{a,b}\otimes_\C \C^{2m},\,\,\,
	(4m,4m)$ & $\Im\C,\,\,\,(1,0)$\\ \hhline{|~|-|-|-|}
   & $GL(2;\R) \cdot Sp(k,\ell), k+\ell = m$ & $\C^2 \otimes_\C \C^{2k,2\ell}
	\,\,\, (4m,4m)$ & $\Im\C,\,\,\,(1,0)$ \\ \hhline{|~|-|-|-|}
   & $GL(1;\H) \cdot Sp(m;\R)$ & $\C^2 \otimes_\C \C^{2m},\,\,\, (4m,4m)$
        & $\Im\C,\,\,\, (1,0)$ \\ \hline\hline
{\rm 12} & $SU(3) \cdot Sp(m)$ & $\C^3 \otimes_\C \C^{2m},\,\,\, (12m,0)$ 
	& $\Im\C,\,\,\, (1,0)$ \\ \hline
   & $SU(a,b) \cdot Sp(k,\ell), \begin{smallmatrix} a+b=3\\k+\ell = m
        \end{smallmatrix}$ & $\C^{a,b} \otimes_\C \C^{2k,2\ell},\,\,\,
        (4ak+4b\ell,4a\ell +4bk)$ & $\Im\C,\,\,\, (1,0)$ \\ \hhline{|~|-|-|-|}
   & $SL(3;\R) \cdot Sp(k,\ell), k+\ell = m$ &
         $\C^3 \otimes_\C \C^{2k,2\ell},\,\,\, (6m,6m)$  
	& $\Im\C,\,\,\, (1,0)$ \\ \hline\hline
{\rm 13} & $U(3) \cdot Sp(m)$ & $\C^3 \otimes_\C \C^{2m},\,\,\, (12m,0)$ 
	& $\Im\C,\,\,\, (1,0)$ \\ \hline
   & $U(a,b) \cdot Sp(k,\ell), \begin{smallmatrix} a+b=3\\k+\ell = m
        \end{smallmatrix}$ & $\C^{a,b} \otimes_\C \C^{2k,2\ell},\,\,\,
        (4ak+4b\ell,4a\ell +4bk)$ & $\Im\C,\,\,\, (1,0)$ \\ \hhline{|~|-|-|-|}
   & $U(a,b) \cdot Sp(m;\R), a+b=3$ & $\C^{a,b} \otimes_\C \C^{2m},\,\,\,
	(6m,6m)$ & $\Im\C,\,\,\, (1,0)$ \\ \hhline{|~|-|-|-|}
   & $GL(3;\R) \cdot Sp(k,\ell), k+\ell = m$ & $\C^3 \otimes_\C  \C^{2k,2\ell},
	\,\,\, (6m,6m)$ & $\Im\C,\,\,\, (1,0)$  \\ \hline\hline
{\rm 14} & $U(4) \cdot Sp(4)$ & $\C^4 \otimes_\C \C^8,\,\,\, (64,0)$ 
	& $\Im\C,\,\,\, (1,0)$ \\ \hline
   & $U(a,b) \cdot Sp(k,\ell), \begin{smallmatrix} a+b=4\\k+\ell = 4
        \end{smallmatrix}$ & $\C^{a,b} \otimes_\C \C^{2k,2\ell},\,\,\,
        (4ak+4b\ell,4a\ell +4bk)$ & $\Im\C,\,\,\, (1,0)$ \\ \hhline{|~|-|-|-|}
   & $U(a,b) \cdot Sp(4;\R), a+b=4$ & $\C^{a,b} \otimes_\C \C^8,\,\,\,
        (32,32)$ & $\Im\C,\,\,\, (1,0)$ \\ \hhline{|~|-|-|-|}
   & $GL(4;\R) \cdot Sp(k,\ell), k+\ell = 4$ & $\C^4 \otimes_\C  \C^{2k,2\ell},
        \,\,\, (32,32)$ & $\Im\C,\,\,\, (1,0)$ \\ \hhline{|~|-|-|-|}
   & $GL(2;\H) \cdot Sp(4;\R)$ & $\C^4 \otimes_\C \C^8,\,\,\,
        (32,32)$ & $\Im\C,\,\,\, (1,0)$ \\ \hline\hline
{\rm 15} & $SU(m) \cdot Sp(4), m \geqq 3$ & $\C^m \otimes_\C \C^8,\,\,\,
	(16m,0)$ & $\Im\C,\,\,\, (1,0)$ \\ \hline
  & $SU(k,\ell) \cdot Sp(r,s), \begin{smallmatrix} k+\ell = m \\ r+s=4
        \end{smallmatrix}$ & $\C^{k,\ell} \otimes_\C \C^{2r,2s},\,\,\,
        (4kr+4\ell s,4ks +4\ell k)$ & $\Im\C,\,\,\, (1,0)$\\ \hhline{|~|-|-|-|}
  & $SL(m;\R) \cdot Sp(r,s), r+s=4$ & $\C^m \otimes_\C \C^{2r,2s},\,\,\,
	(8m,8m)$ & $\Im\C,\,\,\, (1,0)$\\ \hhline{|~|-|-|-|}
  & $SL(m/2;\H) \cdot Sp(4;\R)$ & $\C^m \otimes_\C \C^8,\,\,\,
        (8m,8m)$ & $\Im\C,\,\,\, (1,0)$ \\ \hline\hline
{\rm 16} & $U(m)\cdot Sp(4), m \geqq 3$ & $\C^m\otimes_\C \C^8,\,\,\, (16m,0)$ 
	& $\Im\C,\,\,\, (1,0)$ \\ \hline
   & $U(k,\ell) \cdot Sp(r,s), \begin{smallmatrix} k+\ell = m\\ r+s=4
        \end{smallmatrix}$ & $\C^{k,\ell} \otimes_\C \C^{2r,2s},\,\,\,
        (4kr+4\ell s,4ks +4\ell r)$ & $\Im\C,\,\,\, (1,0)$ \\ \hhline{|~|-|-|-|}
   & $U(k,\ell) \cdot Sp(4;\R), k+\ell =m$ & $\C^{k,\ell}\otimes_\C \C^8,\,\,\,
        (8m,8m)$ & $\Im\C,\,\,\, (1,0)$ \\ \hhline{|~|-|-|-|}
   & $GL(m;\R) \cdot Sp(r,s), r+s = 4$ & $\C^m \otimes_\C \C^{2r,2s},
        \,\,\, (8m,8m)$ & $\Im\C,\,\,\, (1,0)$ \\ \hhline{|~|-|-|-|}
   & $GL(m/2;\H) \cdot Sp(4;\R)$ & $\C^m \otimes_\C \C^8,\,\,\,
        (8m,8m)$ & $\Im\C,\,\,\, (1,0)$ \\ \hline\hline
{\rm 17} & $U(1) \cdot Spin(7)$ & $\C^8,\,\,\, (16,0)$ 
	& $\Im\C,\,\,\, (1,0)$ \\ \hline
   & $U(1) \cdot Spin(6,1)$ & $\C^{6,2},\,\,\, (12,4)$
	& $\Im\C,\,\,\, (1,0)$ \\ \hhline{~|-|-|-}
   & $U(1) \cdot Spin(5,2)$ & $\C^{6,2},\,\,\, (12,4)$
        & $\Im\C,\,\,\, (1,0)$ \\ \hhline{~|-|-|-}
   & $U(1) \cdot Spin(4,3)$ & $\C^{4,4},\,\,\, (8,8)$
        & $\Im\C,\,\,\, (1,0)$ \\ \hhline{~|-|-|-}
   & $\R^+ \cdot Spin(r,s), r+s = 7$ & $\R^{8,8},\,\,\, (8,8)$
	& $\Im\C,\,\,\, (1,0)$ \\ \hline\hline
{\rm 18} & $U(1) \cdot Spin(9)$ & $\C \otimes_\R \R^{16},\,\,\, (32,0)$ 
	& $\Im\C,\,\,\, (1,0)$ \\ \hline
   & $U(1)\cdot Spin(r,s), r+s=9$ & $\C^{8,8},\,\, (16,16)$
        & $\Im\C,\,\,\, (1,0)$ \\ \hhline{~|-|-|-}
   & $\R^+\cdot Spin(r,s), r+s=9$ & $\C^{8,8},\,\, (16,16)$
	& $\Im\C,\,\,\, (1,0)$ \\ \hline\hline
{\rm 19} & $Spin(10)$ & $\C^{16},\,\,\, (32,0)$ & $\R,\,\,\, (1,0)$ \\ \hline
   & $Spin(9,1)$ & $\R^{16,16},\,\, (16,16)$
	& $\Im\C,\,\,\, (1,0)$ \\ \hhline{~|-|-|-}
   & $Spin(8,2)$ & $\C^{8,8},\,\, (16,16)
        $ & $\Im\C,\,\, (1,0)$ \\
        \hhline{|~|-|-|-|}
   & $Spin(7,3)$ & $\H^{4,4}$,\,\, (16,16)
        & $\Im\C,\,\, (1,0)$ \\
        \hhline{|~|-|-|-|}
   & $Spin(6,4)$ & $\C^{8,8},\,\, (16,16)$
        & $\Im\C,\,\, (1,0)$ \\
        \hhline{|~|-|-|-|}
   & $Spin(5,5)$ & $\R^{16,16},\,\, (16,16)$
        & $\R,\,\, (0,1)$ \\
	 \hline\hline
{\rm 20} & $U(1) \cdot Spin(10)$ & $\C^{16},\,\,\, (32,0)$ 
	& $\Im\C,\,\,\, (1,0)$ \\ \hline
   & $\R^+\cdot Spin(9,1)$ & $\R^{16,16},\,\, (16,16)$ 
	& $\R,\,\, (0,1)$ \\ \hhline{~|-|-|-}
   & $U(1)\cdot Spin(8,2)$ & $\C^{8,8},\,\, (16,16)
        $ & $\Im\C,\,\, (1,0)$ \\
        \hhline{|~|-|-|-|}
   & $\R^+\cdot Spin(7,3)$ & $\H^{4,4}$,\,\, (16,16)
        & $\Im\C,\,\, (1,0)$ \\
        \hhline{|~|-|-|-|}
   & $U(1)\cdot Spin(6,4)$ & $\C^{8,8},\,\, (16,16)$
        & $\Im\C,\,\, (1,0)$ \\
        \hhline{|~|-|-|-|}
   & $\R^+\cdot Spin(5,5)$ & $\R^{16,16},\,\, (16,16)$
        & $\R,\,\, (0,1)$ \\
        \hhline{|~|-|-|-|}
   & $U(1)\cdot Spin^*(10)$ & $\H^{4,4},\,\, (16,16)$
        & $\Im\C,\,\, (1,0)$ \\
	\hline\hline
{\rm 21} & $U(1) \cdot G_2$ & $\C^7,\,\,\, (14,0)$ & $\Re\O,\,\,\, (1,0)$ 
	\\ \hline
   & $U(1)\cdot G_{2,A_1A_1}$ & $\C^{3,4},\,\, (6,8)$
        & $\Re\O_{sp},\,\, (1,0)$ \\
        \hhline{|~|-|-|-|}
   & $\R^+\cdot G_2$ & $\R^{1,1} \otimes_\R \R^7,\,\, (7,7)$
        & $\Re\O,\,\, (0,1)$ \\
        \hhline{|~|-|-|-|}
   & $\R^+\cdot G_{2,A_1A_1}$ & $\R^{1,1} \otimes_\R \R^{3,4},\,\, (7,7)$
        & $\Re\O_{sp},\,\, (1,0)$ \\
	\hline\hline
{\rm 22} & $U(1) \cdot E_6$ & $\C^{27},\,\,\, (54,0)$ & 
	$\Im\C,\,\,\, (1,0)$ \\ \hline
   & $U(1)\cdot E_{6,A_5A_1}$ & $\C^{15,12},\,\, (30,24)$
        & $\Im\C,\,\, (1,0)$ \\
        \hhline{|~|-|-|-|}
   & $U(1)\cdot E_{6,D_5T_1}$ & $\C^{16,11},\,\, (32,22)$
        & $\Im\C,\,\, (1,0)$ \\
        \hhline{|~|-|-|-|}
   & $\R^+\cdot E_{6,C_4}$ & $\R^{1,1}\otimes_\R \R^{27},\,\, (27,27)$
        & $\R,\,\, (0,1)$ \\
        \hhline{|~|-|-|-|}
   & $\R^+\cdot E_{6,F_4}$ &  $\R^{1,1}\otimes_\R \R^{26,1},\,\, (27,27)$
        & $\R,\,\, (0,1)$ \\
\hline
\end{longtable}
}

Certain signatures of pseudo--Riemannian metrics from Table \ref{heis-comm} 
are particularly interesting.  The Riemannian ones, of course, are just the
$G_r/H_r$, in other words those where $H$ is compact.  But every such
$G_r/H_r$ also has an invariant Lorentz metric, from the invariant 
symmetric bilinear form on $\gn_r$ that is positive definite on $\gv_r$ and
negative definite on the (one dimensional) center $\gz_r$\,.  But
inspection of Table \ref{heis-comm} shows that there are a few others,
where $\gv$ has an invariant bilinear form of Lorentz signature, say
$(d,1)$, so that $\gn$ has an invariant bilinear form of Lorentz signature
$(d+1,1)$.  For each of those, $d = 1$ and $H \cong \R^+$, so $\gn$ is
the $3$--dimensional Heisenberg algebra $\Bigl ( \begin{smallmatrix} 
0&x&z\\ 0&0&y \\0&0&0 \end{smallmatrix}\Bigr )$ where $\R^+$ acts by
$t : \Bigl ( \begin{smallmatrix} 
0&x&z\\ 0&0&y \\0&0&0 \end{smallmatrix}\Bigr ) \to
\Bigl ( \begin{smallmatrix} 
0&tx&z\\ 0&0& t^{-1}y \\0&0&0 \end{smallmatrix}\Bigr )$ and the metric
has signature $(2,1)$. 

The trans-Lorentz signature is more interesting.  Running through the 
table we see that the only cases there are the following.

\begin{proposition}\label{heis-tLorentz}
The trans--Lorentz cases in {\rm Table \ref{heis-comm}}, signature of the form 
$(p-2,2)$, all are weakly symmetric.  They are

Case 1.  $H = SU(n-1,1)$ where $G/H$ has a $G$--invariant metric of
signature $(2n-1,2)$, and $H = SL(2;\R)$ where $G/H$ has a $G$--invariant metric
of signature $(3,2)$.

Case 2.  $H = U(n-1,1)$ where $G/H$ has a $G$--invariant metric of
signature $(2n-1,2)$, and $H = GL(2;\R)$ where $G/H$ has a $G$--invariant metric
of signature $(3,2)$.

Case 4.  $H = U(1)\cdot Sp(1;\R)$ where $G/H$ has a $G$--invariant metric of
signature $(3,2)$.

Case 5.  $H = SO(2)\cdot SO(n-1,1)$ where $G/H$ has a $G$--invariant metric of
signature $(2n-1,2)$.  

Case 6.  $H = U(1,1)$ where $G/H$ has a $G$--invariant metric of
signature $(5,2)$.

Case 7.  $H = SU(2,1)$ where $G/H$ has a $G$--invariant metric of
signature $(5,2)$.

Case 8.  $H = U(2)$ and $H = U(1,1)$, where $G/H$ has a $G$--invariant metric of
signature $(1,2)$; $H = U(2,1)$ where $G/H$ has a $G$--invariant metric of
signature $(5,2)$; $H = GL(2;\R)$ and $H = GL(1;\H)$, where $G/H$
has a $G$--invariant metric of signature $(1,2)$. 
\end{proposition}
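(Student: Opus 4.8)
The plan has two parts, matching the two clauses of the statement. First I would dispose of weak symmetry at once. By Definition \ref{family-def}, a member $G/H$ of a real form family $\{\{G_r/H_r\}\}$ is \emph{weakly symmetric} precisely when the Riemannian base $G_r/H_r$ is weakly symmetric, and Proposition \ref{heis-riem} together with the sentence following it guarantees that every irreducible commutative Heisenberg nilmanifold $G_r/H_r$ is weakly symmetric Riemannian. Every row of Table \ref{heis-comm}, and in particular every trans--Lorentz entry, lies in such a family, so weak symmetry is automatic and requires no further argument.

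Second, the classification itself is a finite inspection of Table \ref{heis-comm}. For each row $\gn = \gz + \gv$ with $\dim_\R\gz = 1$, and the table records the signature of the $\Ad(H)$--invariant form on $\gv$. Because $H$ acts trivially on the line $\gz$, the sign of the invariant form there is unconstrained, so the signatures of $\gn$ available for a given $H$ are $\mathrm{sig}(\gv)+(1,0)$ and $\mathrm{sig}(\gv)+(0,1)$, each read up to the flip convention $(a,b)=(b,a)$; by Corollary \ref{fam-ps-riem} these are genuine invariant pseudo--Riemannian metrics. The plan is to declare the space trans--Lorentz, signature $(p-2,2)$, exactly when one of these totals carries a $2$ among its two entries, and then to solve for this condition case by case.

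The remaining work is row-by-row arithmetic on the tabulated signatures. I would write $\mathrm{sig}(\gv)$ in terms of the integer parameters and solve for a $2$ in the total: in Cases 1, 2 and 5 the real--form signature $(2r,2s)$ forces $s=1$, giving $SU(n-1,1)$, $U(n-1,1)$, $SO(2)\cdot SO(n-1,1)$ with total $(2n-1,2)$, while the split rows $(n,n)$ force $n=2$, giving $SL(2;\R)$, $GL(2;\R)$; Case 4 likewise forces $m=1$ in $U(1)\cdot Sp(m;\R)$. Cases 6, 7 and 8 use the $S^2$ and $\Lambda^2$ signatures, where $2k\ell=2$ or $k^2\mp k+\ell^2\mp\ell=2$ singles out $k=\ell=1$ and $(k,\ell)=(2,1)$, and the small compact or real rows of signature $(2,0)$ or $(1,1)$ contribute a $2$ after flipping the sign on $\gz$; this produces exactly $U(1,1), SU(2,1), U(2), U(2,1), GL(2;\R), GL(1;\H)$ with the stated signatures. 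The hard part is the bookkeeping: one must combine both sign freedoms with the flip convention, so that, e.g., $\mathrm{sig}(\gv)=(2,4)$ for $SU(2,1)$ yields $(5,2)$ rather than only $(3,4)$, and the definite rows such as $U(2)$ yield the genuinely indefinite $(1,2)$. Finally, to confirm completeness of the list I would check that Case 3 and the tensor--product and exceptional/spin Cases 9--22 never produce a $2$: there the signature entries are governed by products, hence are multiples of $4$ whenever an $Sp$ factor is present and otherwise have both entries $\geqq 4$ in any nontrivial real form, so no choice of signs is trans--Lorentz; one must also discard the reducible low--rank degenerations, such as $n=2$ in Case 5, that fall outside the irreducible classification.
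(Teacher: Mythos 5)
Your proposal is correct and follows essentially the same route as the paper: the paper disposes of weak symmetry by its Definition \ref{family-def} convention together with the remark after Proposition \ref{heis-riem}, and then obtains the list purely by inspection (``running through the table''), using exactly the sign freedom on the one--dimensional center $\gz$ and the flip convention $(a,b)\sim(b,a)$ that you describe. Your write-up in fact supplies more detail than the paper does --- the row-by-row arithmetic, the $(2,4)+(0,1)\sim(5,2)$ bookkeeping for $SU(2,1)$, and the exclusion of Cases 3 and 9--22 and of the degenerate low-rank rows --- none of which the paper records explicitly.
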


\section{Irreducible Commutative Nilmanifolds: Classification}\label{sec5}
\setcounter{equation}{0}

In our notation, Vinberg's classification of maximal 
irreducible commutative Riemannian nilmanifolds is

\addtocounter{equation}{1}
\begin{longtable}{|r|l|l|l|l|l|}
\caption*{\bf {\normalsize Table \thetable \quad Maximal Irreducible 
	Nilpotent Gelfand 
        Pairs $(N_r\rtimes H_r,H_r)$ }} \label{vin-table}\\
\hline
 & Group $H_r$ & $\gv_r$ & $\gz_r$ & $U(1)$ & max \\ \hline
\hline\hline
\endfirsthead
\multicolumn{6}{l}{\textit{Table \thetable\, continued from previous 
	page $\dots$}}\\
\hline
 & Group $H_r$ & $\gv_r$ & $\gz_r$ & $U(1)$ & max \\ \hline
\hline
\endhead
\hline \multicolumn{6}{r}{\textit{$\dots$ Table \thetable\, continued on next 
	page $\dots$}} \\
\endfoot
\hline
\endlastfoot
\hline
 & Group $H_r$ & $\gv_r$ & $\gz_r$ & $U(1)$ & max \\ \hline
1 & $SO(n)$ & $\R^n$ & $\Lambda\R^{n\times n} = \gs\go(n)$ &  & \\ \hline
2 & $Spin(7)$ & $\R^8 = \O$ & $\R^7 = \Im\O$  &  & \\ \hline
3 & $G_2$ & $\R^7 = \Im\O$ & $\R^7 = \Im\O$ &  & \\ \hline
4 & $U(1)\cdot SO(n)$ & $\C^n$ & $\Im\C$ & & $n\ne 4$ \\ \hline
5 & $(U(1)\cdot) SU(n)$ & $\C^n$ & $\Lambda^2\C^n \oplus\Im\C$ & 
        $n$ odd &  \\ \hline
6 & $SU(n), n$ odd & $\C^n$ & $\Lambda^2\C^n$ &  & \\ \hline
7 & $SU(n), n$ odd & $\C^n$ & $\Im\C$ &  & \\ \hline
8 & $U(n)$ & $\C^n$ & $\Im \C^{n\times n} = \gu(n)$ &  & \\ \hline
9 & $(U(1)\cdot) Sp(n)$ & $\H^n$ & $\Re \H^{n \times n}_0 \oplus \Im\H$ &  & 
        \\ \hline
10 & $U(n)$ & $S^2(\C^n)$ & $\R$ & & \\ \hline
11 & $(U(1)\cdot) SU(n), n \geqq 3$ & ${\Lambda}^2(\C^n)$ & $\R$ & $n$ even & 
        \\ \hline
12 & $U(1)\cdot Spin(7)$ & $\C^8$ & $\R^7 \oplus \R$ & & \\ \hline
13 & $U(1)\cdot Spin(9)$ & $\C^{16}$ & $\R$ & & \\ \hline
14 & $(U(1)\cdot) Spin(10)$ & $\C^{16}$ & $\R$ & & \\ \hline
15 & $U(1)\cdot G_2$ & $\C^7$ & $\R$ & & \\ \hline
16 & $U(1)\cdot E_6$ & $\C^{27}$ & $\R$ & & \\ \hline
17 & $Sp(1)\times Sp(n)$ & $\H^n$ & $\Im \H = \gs\gp(1)$ & & $n \geqq 2$ 
        \\ \hline
18 & $Sp(2)\times Sp(n)$ & $\C^{4\times 2n}$ & 
        $\Im \H^{2\times 2} = \gs\gp(2)$ & & \\ \hline
19 & $(U(1)\cdot) SU(m) \times SU(n)$ &  &  &  & \\
   & $m,n \geqq 3$ & $\C^m\otimes \C^n$ & $\R$ & $m=n$ &   \\ \hline
20 & $(U(1)\cdot) SU(2) \times SU(n)$ & $\C^2 \otimes \C^n$ & 
        $\Im \C^{2\times 2} = \gu(2)$ & $n=2$ & \\ \hline
21 & $(U(1)\cdot) Sp(2) \times SU(n)$ & $\H^2\otimes \C^n$ & $\R$ & 
        $n \leqq 4$ & $n \geqq 3$ \\ \hline
22 & $U(2)\times Sp(n)$ & $\C^2 \otimes \H^n$ & $\Im \C^{2\times 2} = \gu(2)$ &
        & \\ \hline
23 & $U(3)\times Sp(n)$ & $\C^3 \otimes \H^n$ & $\R$ & & $n \geqq 2$
        \\ \hline
\end{longtable}
All groups are real.  All spaces $(G_r/H_r, ds^2)$ are weakly symmetric
except for entry 9 with $H_r = Sp(n)$; see \cite[Theorem 15.4.10]{W2007}.  
This is due to Lauret \cite{L1998}.  For more details
see \cite[Section 15.4]{W2007}.  If a group $H_r$ 
is denoted $(U(1)\cdot)H_r'$ it can be
$H_r'$ or $U(1)\cdot H_r'$\,.  Under certain conditions the only case
is $U(1)\cdot H_r'$\; then those conditions are noted in the $U(1)$
column.  In this section we extend the considerations of 
Table \ref{vin-table} from 
commutative (including weakly
symmetric) Riemannian nilmanifolds to the pseudo--Riemannian setting.

Now we run through the corresponding real form families, omitting the
complexifications of the Riemannian forms $G_r/H_r$\,.
When we write $m/2$ it is implicit that we are in a case where $m$ is even,
and similarly $n/2$ assumes that $n$ is even.   Further
$k+\ell = m$ and $r+s = n$ where applicable.  We also use the notation
$\{L_1, \dots , L_p\}$ to mean any one of the $L_i$\,,
as in $\{\{1\}, U(1), \R^+\}\cdot H'$.  Finally, our convention on possible
invariant signatures is that $(a,b)$ represents both possibilities $(a,b)$
and $(b,a)$, that $(a,b)\oplus (c,d)$ represents all four possibilities
$(a+c,b+d), (a+d,b+c), (b+d,a+c) \text{ and } (b+c,a+d)$, etc.

\underline{Case 1.}  $H \ne SO^*(n)$ by Lemma \ref{no-so*}.  The
signature calculations are straightforward.

\underline{Case 2.}  The assertions follow from Lemma \ref{spin}.

\underline{Case 3.}  The assertions are obvious.

\underline{Case 4.}  Lemma \ref{no-so*} does not eliminate 
$H = U(1)\cdot SO^*(2m)$ because $\theta = \Ad(J)$, 
$J = \left ( \begin{smallmatrix} 0 & I\\-I & 0 \end{smallmatrix}\right )$,
extends to $\C^{2m}$ as $cJ$ where $c \in U(1)$ with $(cJ)^2 = 1$. 
However that is necessarily trivial on the $U(1)$ factor, so
$H = \R^+\cdot SO^*(2m)$ is eliminatred.  The
signature calculations are straightforward. 

\underline{Cases 5a and 5b.}  $H \ne \{S,G\}L(n/2;\H)$ by Lemma \ref{no-glH}.
The signatures for $(S)U(r,s)$ are obvious, and for $\{S,G\}L(n;\R)$  
they follow from Lemma \ref{ext3}. 

\underline{Cases 6 and 7.} The calculations are straightforward.

\underline{Case 8.}  $H \ne GL(n/2;\H)$ by Lemma \ref{no-glH}.  The 
signatures for $U(r,s)$ are obvious, and for $GL(n;\R)$  
they follow from Lemma \ref{ext3}.

\underline{Case 9.}  The $Sp(n;\R)$ entry depends on Example \ref{more-prods}.

\underline{Cases 10, 11a and 11b.}  The calulations are straightforward.
Note Remark \ref{s2-l2-ok} for $H=GL(n/2;\H)$.

\underline{Cases 12 and 13.}  The calculations are straightforward.

\underline{Case 14.}  $H = U(1)\cdot Spin^*(10)$ is admissible by lifting 
$\Ad(J)$, 
$J = \left ( \begin{smallmatrix} 0 & I\\-I & 0 \end{smallmatrix}\right )$,
from $SO(10)$ to $\theta = \Ad(c\widetilde{J})$ on $Spin(10)$, where
$c \in U(1)$ so that $c\widetilde{J}$ has square $1$ on $\gv_r$\,.
But $H \ne Spin^*(10)$ by Lemma \ref{no-so*}.

\underline{Case 15.}  The assertions are obvious.

\underline{Case 16.}  The assertions follow from the $E_6$ discussion in
Section \ref{sec3}.

\underline{Case 17.} $H \ne Sp(1)\times Sp(n;\R)$  and
$H \ne Sp(1;\R)\otimes Sp(r,s)$ as noted in
Remark \ref{some-prods}.  The signatures are obvious for 
$H = Sp(1)\times Sp(r,s)$.  They follow from Lemma \ref{more-prods}
for the other two cases.

\underline{Case 18.} $H$ cannot be $Sp(2)\times Sp(n;\R)$, 
$Sp(2;\R)\times Sp(r,s)$, nor $Sp(1,1)\times Sp(r,s)$, as noted in
Remark \ref{some-prods}.  See Lemma \ref{sp-sp-so} for the signatures
when $H = Sp(2;\R)\times Sp(n;\R)$.  The other signatures are immediate.

\underline{Case 19.} Lemma \ref{no-glH} eliminates both variations on
$\{1,\R^+\}(SL(m;\R)\times SL(n/2;\H))$.  The signatures in the other
cases are straightforward. 

\underline{Case 20.} $H$ cannot have semisimple part $SU(k,l)\times Sl(n/2;\H)$
by Remark \ref{some-prods-ok}.  The signatures are evident in the other
four cases.

\underline{Case 21, 22 and 23.} Remarks \ref{some-prods} and \ref{some-prods-ok}
eliminate most cases with a real symplectic group and some cases with a
quaternion linear group.  The signatures are computable.  

For the convenience of the reader in using the tables, 
Table \ref{max-irred} repeats some material from
Table \ref{heis-comm}. 

\addtocounter{equation}{1}
{\footnotesize
\begin{longtable}{|r|l|l|l|}
\caption*{\bf {\normalsize Table} \thetable \quad {\normalsize Maximal 
	Irreducible Commutative Nilmanifolds 
	$(N_r\rtimes H_r)/H_r$}} \label{max-irred} \\
\hline
 & Group $H$ & \begin{tabular}{l} $\gv$ \\ $\text{signature}(\gv)$\end{tabular}
	& \begin{tabular}{l} $\gz$ \\ $\text{signature}(\gz)$ \end{tabular} 
	\\ \hline \hline
\endfirsthead
\multicolumn{4}{l}{{\normalsize \textit{Table \thetable\, continued from 
	previous page $ \dots$}}} \\
\hline
 & Group $H$ & \begin{tabular}{l} $\gv$ \\ \text{signature}$(\gv)$\end{tabular}
	 & \begin{tabular}{l} $\gz$ \\ \text{signature}$(\gz)$\end{tabular}  \\ \hline
\hline
\endhead
\hline \multicolumn{4}{r}{{\normalsize \textit{$\dots$ Table \thetable\, 
	continued on next page}}} \\
\endfoot
\hline
\endlastfoot
\hline
1 & $SO(r,s)$ & \begin{tabular}{l} $\R^{r,s}$ \\ $(r,s)$\end{tabular}
	& \begin{tabular}{l} $\gs\go(r,s)$ \\ $(\tfrac{r(r-1)+s(s-1)}{2},rs)$ 
	  \end{tabular} \\ \hline
\hline \hline
2 & \begin{tabular}{l}$Spin(k,7-k)$ \\ $4 \leqq k\leqq 7$\end{tabular}
	& \begin{tabular}{l}$\R^{q,8-q}\,, q = 2[\tfrac{k+1}{2}]$\\
		$(q,8-q)$ \end{tabular}
		 & \begin{tabular}{l} $\R^{k,7-k}$\\ $(k,7-k)$  \end{tabular} 
	 \\ \hline \hline 
3 & $G_2$ & 
	$\Im\O,\,\,(7,0)$ 
	& $\Im\O,\,\,(7,0)$  
	 \\ \hhline{|~|-|-|-|}
  & $G_{2,A_1A_1}$ 
	& $\Im  \O_{sp},\,\,(3,4)$ 
	& $\Im \O_{sp},\,\,(3,4)$   \\
\hline \hline
4 & \begin{tabular}{l} $U(1)\cdot SO(r,s)$ \\  
	 $n=r+s\ne 4$\end{tabular} 
	& $\C^{r,s}$,\, $(2r,2s)$ 
	& $\Im\C$,\, $(1,0)$  \\ 
	\hhline{|~|-|-|-|}
  & $U(1)\cdot SO^*(2m)$ & $\C^{m,m},\,\,(2m,2m)$ 
	& $\Im \C,\,\,(1,0)$  \\
	\hhline{|~|-|-|-|}
  & $\R^+ \cdot SO(r,s)$ & $\R^{1,1} \otimes_\R \R^{r,s},\,\,(n,n)$
	& $\R$,\,\,(0,1)  \\
\hline \hline

5a & \begin{tabular}{l} $SU(r,s)$,\\ $n=r+s$ even \end{tabular} 
	& $\C^{r,s},\,\,(2r,2s)$ 
	& \begin{tabular}{l} $\Lambda_\R^2(\C^{r,s}) \oplus\Im\C$\\
		{\tiny $(2r^2-r+2s^2-s,4rs)$} \\
		{\tiny $\qquad \oplus (1,0)$} \end{tabular}
	 \\ \hhline{|~|-|-|-|}
   & $SL(n;\R)$ & $\R^{n,n},\,\,(n,n)$
        & \begin{tabular}{l} $\Lambda^2_\R(\R^{n,n}) \oplus\R$\\
                $(n^2-\tfrac{n}{2},n^2-\tfrac{n}{2})$ \\
		$\qquad \oplus (1,0)$\end{tabular}
         \\
\hline \hline

5b & $U(r,s),n=r+s$ & $\C^{r,s},\, (2r,2s)$ & 
	\begin{tabular}{l}  $\Lambda_\R^2(\C^{r,s}) \oplus\Im\C$\\
               {\tiny $(2r^2-r+2s^2-s,4rs)$} \\
		{\tiny  $\oplus (1,0)$} \end{tabular} 
		\\ \hhline{|~|-|-|-|}
  & $GL(n;\R)$ & $\R^{n,n},\,\, (n,n)$ 
	& \begin{tabular}{l} $\Lambda_\R^2(\R^{n,n}) \oplus\R$\\
                $(n^2-\tfrac{n}{2},n^2-\tfrac{n}{2})$ \\
		$\qquad \oplus (1,0)$\end{tabular}
	 \\
\hline\hline

6 & \begin{tabular}{l} $SU(r,s)$ \\ $r+s=n$ odd \end{tabular}
		& $\C^{r,s},\,\, (2r,2s)$ 
	& \begin{tabular}{l} $\Lambda^2_\R(\C^{r,s})$\\ 
		{\tiny $(2r^2-r+2s^2-s,4rs)$} \end{tabular}  
	\\ \hhline{|~|-|-|-|}
  & $SL(n;\R)$ & $\R^{n,n},\,\, (n,n)$ & 
	\begin{tabular}{l} $\Lambda^2_\R(\R^{n,n})$\\ 
		$(n^2-\tfrac{n}{2},n^2-\tfrac{n}{2})$\end{tabular}   \\
\hline\hline

7 & \begin{tabular}{l} $SU(r,s)$\\$r+s= n$ odd \end{tabular}
	& $\C^{r,s},\,\, (2r,2s)$ & $\Im\C,\,\, (1,0)$   \\ \hhline{|~|-|-|-|}
  & $SL(n;\R)$ & $\R^{n,n},\,\, (n,n)$ & $\R,\,\, (0,1)$   \\
\hline\hline

8 & $U(r,s)$ & $\C^{r,s},\,\, (2r,2s)$ & $\gu(r,s),\,\, (r^2+s^2,2rs)$  \\
	\hhline{|~|-|-|-|}
  & $GL(n;\R)$ & $\R^{n,n},\,\, (n,n)$ 
	& \begin{tabular}{l} $\gg\gl(n;\R)$ \\ 
		$(\tfrac{n(n-1)}{2}, \tfrac{n(n+1)}{2})$\end{tabular}  \\
\hline\hline

9 & $\{\{1\},U(1),\R^+\}\cdot Sp(r,s)$ & $\H^{r,s},\,\,(4r,4s)$ & 
      \begin{tabular}{l} {\tiny $\Re \H^{(r,s) \times (r,s)}_0 \oplus \Im\H $} 
        \\ {\tiny $(2n^2\text{-}n\text{-}4rs\text{-}1,4rs)$}\\
		{\tiny $\qquad \oplus (3,0)$} 
		\end{tabular} \\ \hhline{|~|-|-|-|}
   & $U(1)\cdot Sp(n;\R)$ & $\R^{2n,2n},\,\, (2n,2n)$   
	& \begin{tabular}{l} $\Re \H_{sp,0}^{n \times n} \oplus 
               \Im\H_{sp}$\\ $(n^2-1,n^2-n)$\\
		$\quad \oplus (2,1)$\end{tabular} \\
\hline\hline

10 & $U(r,s)$ & \begin{tabular}{l} $S_\C^2(\C^{r,s})$
		\\ {\tiny $(r(r+1) + s(s+1), 2rs)$}\end{tabular} 
	& $\Im\C,\,\, (1,0)$  \\
	\hhline{|~|-|-|-|}
  & $GL(n;\R)$ & \begin{tabular}{l} $\R^{1,1}\otimes_\R S_\R^2(\R^n)$\\
	{\tiny $(\tfrac{n(n+1)}{2},  \tfrac{n(n+1)}{2})$}\end{tabular} 
		&  $\R,\,\, (0,1)$ \\
	\hhline{|~|-|-|-|}
  & $GL(\tfrac{n}{2};\H)$ & \begin{tabular}{l} $S^2_\C(\C^n)$\\
	{\tiny $(\tfrac{n(n+1)}{2}, \tfrac{n(n+1)}{2})$}\end{tabular} 
		 & $\R,\,\, (0,1)$  \\
\hline\hline

11a & \begin{tabular}{l} $SU(r,s)$ \\ $r+s=n>3$ odd \end{tabular} 
	& \begin{tabular}{l} $\Lambda^2_\C(\C^{r,s})$ \\
		$(r^2-r+s^2-s,2rs)$\end{tabular}  
	& $\Im\C,\,\, (1,0)$  \\
	\hhline{|~|-|-|-|}
   & $SL(n;\R)$ 
	& \begin{tabular}{l} $\R^{1,1}\otimes_\R \Lambda_\R^2(\R^n)$\\
		$(\tfrac{n(n-1)}{2}, \tfrac{n(n-1)}{2}))$ \end{tabular}
	& $\R$,\,\, (0,1)  \\ 
\hline\hline        

11b & \begin{tabular}{l} $U(r,s)$ \\ $r+s=n \geqq 3$ \end{tabular}
	& \begin{tabular}{l}  ${\Lambda}^2_\C(\C^{r,s})$ \\
	  $(r^2-r+s^2-s,2rs)$\end{tabular} & $\Im\C,\,\, (1,0)$  \\
	\hhline{|~|-|-|-|}
    & $GL(n;\R)$ 
	& \begin{tabular}{l} {\tiny $\R^{1,1}\otimes_\R \Lambda_\R(\R^n)$}\\
	  {\tiny $(\tfrac{n(n-1)}{2}, \tfrac{n(n-1)}{2})$}\end{tabular}
		 & $\R$,\,\, (0,1)  \\ 
	\hhline{|~|-|-|-|}
    & $H= GL(\tfrac{n}{2};\H)$ 
	& \begin{tabular}{l} ${\Lambda}^2_\C(\C^n)$ \\
	 $(\tfrac{n(n-1)}{2}, \tfrac{n(n-1)}{2})$\end{tabular} 
	  & $\R,\,\, (0,1)$  \\ 
\hline\hline        

12 & \begin{tabular}{l} $U(1)\cdot Spin(k,7-k)$ \\
		$4\leqq k\leqq 7,\, q=2[\tfrac{k+1}{2}]$ \end{tabular}
	& \begin{tabular}{l} $\C\otimes_\R \R^{q,8-q}$ \\
		$(2q,16-2q)$ \end{tabular}
	& \begin{tabular}{l} $\R^{k,7-k} \oplus \Im\C$\\
		$(k,7-k)\oplus (1,0)$ \end{tabular} \\ \hhline{|~|-|-|-|}
  &\begin{tabular}{l} $\R^+\cdot Spin(k,7-k)$ \\
                $4\leqq k\leqq 7,\, q=2[\tfrac{k+1}{2}]$ \end{tabular}
        & \begin{tabular}{l} $\R^{1,1}\otimes_\R \R^{q,8-q}$ \\
                $(8,8)$ \end{tabular}
        & \begin{tabular}{l} $\R^{k,7-k} \oplus \R$\\
                $(k,7-k)\oplus (0,1)$ \end{tabular} \\ \hline\hline

13 & \begin{tabular}{l} $U(1)\cdot Spin(k,9-k)$\\
		$5\leqq k\leqq 9,\, q=2^{1+[\tfrac{k+3}{4}]}$ \end{tabular}
	& \begin{tabular}{l} $\C\otimes_\R \R^{q,16-q}$\\
		$(2q,32-2q)$\end{tabular}
	& $\Im\C,\,\, (1,0)$ \\ \hhline{|~|-|-|-|}
   & \begin{tabular}{l} $\R^+\cdot Spin(k,9-k)$\\
                $5\leqq k\leqq 9,\, q=2^{1+[\tfrac{k+3}{4}]}$ \end{tabular}
        & \begin{tabular}{l} $\R^{1,1}\otimes_\R \R^{q,16-q}$\\
		$(16,16)$ \end{tabular}
	& $\R,\,\, (0,1)$ \\ \hline\hline

14 & $\{\{1\} ,\, U(1)\}\cdot )Spin(10)$ & $\C^{16},\,\, (32,0)$ 
	& $\Im\C,\,\, (1,0)$ \\ \hhline{|~|-|-|-|}
   & $\{\{1\} ,\, \R^+\}\cdot Spin(9,1)$ & $\R^{16,16},\,\, (16,16)$ 
	& $\R,\,\, (0,1)$ \\
	\hhline{|~|-|-|-|}
   & $\{\{1\} ,\, U(1)\}\cdot Spin(8,2)$ & $\C^{8,8},\,\, (16,16)
	$ & $\Im\C,\,\, (1,0)$ \\
	\hhline{|~|-|-|-|}
   & $\{\{1\} ,\, \R^+\}\cdot Spin(7,3)$ & $\H^{4,4}$,\,\, (16,16) 
	& $\R,\,\, (0,1)$ \\
	\hhline{|~|-|-|-|}
   & $\{\{1\} ,\, U(1)\}\cdot Spin(6,4)$ & $\C^{8,8},\,\, (16,16)$ 
	& $\Im\C,\,\, (1,0)$ \\
	\hhline{|~|-|-|-|}
   & $\{\{1\},\,  \R^+\}\cdot Spin(5,5)$ & $\R^{16,16},\,\, (16,16)$ 
	& $\R,\,\, (0,1)$ \\
	\hhline{|~|-|-|-|}
   & $U(1)\cdot Spin^*(10)$ & $\H^{4,4},\,\, (16,16)$ 
	& $\Im\C,\,\, (1,0)$ \\
\hline\hline

15 & $U(1)\cdot G_2$ & $\C^7 = \Im\O_\C,\,\, (14,0)$ 
	& $\R = \Re\O,\,\, (0,1)$ \\ \hline
   & $U(1)\cdot G_{2,A_1A_1}$ & $\C^{3,4},\,\, (6,8)$ 
	& $\Re\O_{sp},\,\, (1,0)$ \\ 
	\hhline{|~|-|-|-|}
   & $\R^+\cdot G_2$ & $\R^{1,1} \otimes_\R \R^7,\,\, (7,7)$ 
	& $\Re\O,\,\, (0,1)$ \\ 
	\hhline{|~|-|-|-|}
   & $\R^+\cdot G_{2,A_1A_1}$ & $\R^{1,1} \otimes_\R \R^{3,4},\,\, (7,7)$ 
	& $\Re\O_{sp},\,\, (1,0)$ \\ 
\hline\hline

16 & $U(1)\cdot E_6$ & $\C^{27},\,\, (54,0)$ 
	& $\Im\C,\,\, (1,0)$ \\ \hline
   & $U(1)\cdot E_{6,A_5A_1}$ & $\C^{15,12},\,\, (30,24)$ 
	& $\Im\C,\,\, (1,0)$ \\ 
	\hhline{|~|-|-|-|}
   & $U(1)\cdot E_{6,D_5T_1}$ & $\C^{16,11},\,\, (32,22)$ 
	& $\Im\C,\,\, (1,0)$ \\ 
	\hhline{|~|-|-|-|}
   & $\R^+\cdot E_{6,C_4}$ & $\R^{1,1}\otimes_\R \R^{27},\,\, (27,27)$ 
	& $\R,\,\, (0,1)$ \\
	\hhline{|~|-|-|-|}
   & $\R^+\cdot E_{6,F_4}$ &  $\R^{1,1}\otimes_\R \R^{26,1},\,\, (27,27)$ 
	& $\R,\,\, (0,1)$ \\
\hline\hline

17 & $Sp(1)\cdot Sp(r,s), r+s\geqq 2$ & $\H^{r,s},\,\, (4r,4s)$ 
	& $\gs\gp(1),\,\, (3,0)$ \\
	\hhline{|~|-|-|-|}
   & $Sp(1;\R)\cdot Sp(n;\R)$ & $\R^{2n,2n},\,\, (2n,2n)$
	& $\gs\gp(1;\R),\,\, (1,2)$ \\
\hline\hline

18 & \begin{tabular}{l} $Sp(2)\cdot Sp(r,s)$ \\ $r+s=n \geqq 2$\end{tabular} 
	& $\C^4\otimes_\C \C^{2r,2s}$,\,  
		$(16r,16s)$ 
	& $\gs\gp(2)$,\,\, $(10,0)$  \\ 
	\hhline{|~|-|-|-|}
   & $Sp(1,1)\cdot Sp(r,s)$ & $\C^{2,2}\otimes_\C\C^{2r,2s},\,\, (8n,8n)$
	& $\gs\gp(1,1),\,\, (6,4)$ \\
	\hhline{|~|-|-|-|}
   & $H = Sp(2;\C)$ ($n=2$) & $\C^{4\times 4},\,\,(16,16)$ 
	& $\gs\gp(2),\,\, (10,0)$   \\
	\hhline{|~|-|-|-|}
   & $H=Sp(2;\R)\cdot Sp(n;\R)$ &  $\R^{8n,8n},\,\, (8n,8n)$ 
	& $\gs\gp(2;\R),\,\, (4,6)$  \\
\hline\hline
        
19 & \begin{tabular}{l} $\{\{1\} ,\, U(1)\}\cdot$\\
		$\quad (SU(k,\ell) \cdot SU(r,s))$ \\
		$m=k+\ell,n=r+s \geqq 3$ \\
	$U(1) \text{ required if } m = n$ \end{tabular}
	& \begin{tabular}{l} $\C^{(k,\ell)\times (r,s)}$\\
		$(2kr+2\ell s, 2ks + 2\ell r)$\end{tabular} 
	& $\Im \C,\,\,(1,0)$  \\ \hhline{|~|-|-|-|}
  & \begin{tabular}{l} $\R^+ \cdot SL(m;\C)$ ($m=n$) \end{tabular}
  	& \begin{tabular}{l} $\gg\gl(m;\C)$\\$(m^2,m^2)$\end{tabular} 
	& $\Im\C,\,\, (1,0)$ \\
	\hhline{|~|-|-|-|}
  & \begin{tabular}{l} $\{\{1\} ,\, \R^+\}\cdot $ \\
		$\quad (SL(m;\R)\cdot SL(n;\R))$ \end{tabular}
	& \begin{tabular}{l} $\R^{1,1}\otimes_\R \R^{m\times n}$ \\
		$(mn,mn)$ \end{tabular} & $\R,\,\, (0,1)$ \\
	\hhline{|~|-|-|-|}
  & \begin{tabular}{l} $\{\{1\} ,\, \R^+\}\cdot$ \\ 
		$\quad (SL(\tfrac{m}{2};\H) \cdot SL(\tfrac{n}{2};\H))$ 
		\end{tabular}
	&  \begin{tabular}{l} $\R^{1,1}\otimes_\R \H^{(m/2) \times (n/2)}$\\
		$(mn,mn)$ \end{tabular}
	& $\R,\,\, (0,1)$  \\
\hline\hline

20 & \begin{tabular}{l} $\{\{1\} ,\,  U(1)\}\cdot$\\ 
		$\quad (SU(k,\ell) \cdot SU(r,s))$ \\ 
		$k+\ell =2, n =r+s \geqq 2$ \\ 
		$U(1) \text{ required if } n=2$ \end{tabular} 
	& \begin{tabular}{l} $\C^{(k,\ell)\times (r,s)}$ \\
		$(2kr+2\ell s, 2ks+2\ell r)$ \end{tabular}
	& \begin{tabular}{l} $\gu(k,\ell)$ \\
		$(2k,2\ell)$\end{tabular}   \\ 
	\hhline{|~|-|-|-|}	
    & \begin{tabular}{l} $\{\{1\} ,\, \R^+\}\cdot$\\
		$\quad  (SL(1;\H)\cdot SL(n/2;\H))$ \\
		\end{tabular}
	& $\C^{2\times n},\,\, (2n,2n)$
	& $\gg\gl(1;\H),\,\, (3,1)$ \\
        \hhline{|~|-|-|-|}
    & \begin{tabular}{l} $\{\{1\},\,\R^+\}\cdot(SL(2;\R) \cdot $\\ 
		$\quad SL(n;\R))$ \\
                \end{tabular}
	& \begin{tabular}{l} $\R^{1,1}\otimes_\R \R^{2\times n}$ \\
		$(2n,2n)$ \end{tabular}
	& $\gg\gl(2;\R), \,\, (1,3)$  \\
\hline\hline

21 & \begin{tabular}{l} $\{\{1\} ,\, U(1)\}\cdot Sp(k,\ell)\cdot$\\
	$\quad SU(r,s), \begin{smallmatrix} k+\ell = 2 \\
		r+s = n \geqq 3\end{smallmatrix} $\\
	$U(1) \text{ required if } n \leqq 4$
	\end{tabular}
	& \begin{tabular}{l} $\H^{k,\ell} \otimes_\R \C^{r,s}$ \\
		$(8kr + 8\ell s, 8ks + 8\ell r)$\end{tabular}
	& $\Im\C,\,\, (1,0)$
	\\ \hhline{|~|-|-|-|} 
   & \begin{tabular}{l} $\{\{1\} ,\, \R^+\}\ \cdot Sp(k,\ell)\cdot$\\
	$\quad SL(n;\R)),\, \R^+ \text{ if } n \leqq 4$ \end{tabular}
	&\begin{tabular}{l} $\H^{k,\ell}\otimes_\R \R^{n,n}$ \\ 
		$(8n,8n)$ \end{tabular}
	& $\Im\C,\,\, (1,0)$
        \\ \hhline{|~|-|-|-|}
   & $Sp(2;\R) \cdot U(r,s))$ 
	& $\R^{4,4}\otimes_\R \C^{r,s} ,\,\, (8n,8n)$ & $\Im\C,\,\, (1,0)$     
	\\ \hhline{|~|-|-|-|}
   & \begin{tabular}{l} $\{\{1\} ,\, \R^+\}\ \cdot (Sp(2;\R)\cdot $ \\
	 $\quad SL(\tfrac{n}{2};\H),\, \R^+ \text{ if } n\leqq 4$\end{tabular} 
	& $\R^{4,4}\otimes_\R \H^{n/2},\,\, (8n,8n)$ 
	& $\R,\,\, (0,1)$     \\
\hline\hline

22 & $U(k,\ell)\cdot Sp(r,s), \begin{smallmatrix} k+\ell =2 \\r+s=n
		\end{smallmatrix}$ 
	& \begin{tabular}{l} $\C^{k,\ell} \otimes_\C \C^{2r,2s}$ \\
		$(4kr+4\ell s, 4ks+4\ell r)$ \end{tabular}
	& \begin{tabular}{l} $\gu(k,\ell)$ \\
		$(2k,4-2k)$\end{tabular}   
	\\ \hhline{|~|-|-|-|}
   & $U(k,\ell)\cdot Sp(n;\R)$ 
	& \begin{tabular}{l} $\C^{k,\ell}\otimes_\C \C^{n,n}$\\
		$(4n,4n)$ \end{tabular}
	& \begin{tabular}{l} $\gu(k,\ell)$ \\
                $(2k,4-2k)$ \end{tabular}
	\\ \hhline{|~|-|-|-|}
   & $GL(2;\R)\cdot Sp(r,s)$ 
	& $\R^2 \otimes_\R \H^{r,s},\,\, (4n,4n)$ 
	& $\gg\gl(2;\R),\,\, (1,3)$   \\ \hhline{|~|-|-|-|}
   & $GL(1;\H)\cdot Sp(n;\R)$ & $\H \otimes_\C \C^{2n},\,\, (4n,4n)$
	& $\gg\gl(1;\H),\,\, (3,1)$   \\
\hline\hline

23 & \begin{tabular}{l} $U(k,\ell)\cdot Sp(r,s)$\\
	{\tiny $k+\ell=3, n=r+s\geqq 2$} \end{tabular}
	& \begin{tabular}{l} $\C^{k,\ell} \otimes_\C \C^{2r,2s}$\\
	  $(4kr+4\ell s, 4ks+4\ell r)$ \end{tabular}
	& $\Im\C,\,\, (1,0)$   \\ \hhline{|~|-|-|-|}
   & $U(k,\ell)\cdot Sp(n;\R)$
	& $\C^{k,\ell} \otimes_\C \C^{2n},\,\, (6n,6n)$
	& $\Im\C,\,\, (1,0)$   \\ \hhline{|~|-|-|-|}
   & $GL(3;\R)\cdot Sp(r,s)$ 
	& $\C^3 \otimes_\C \H^{r,s},\,\, (6n,6n)$ & $\R,\,\, (0,1)$   \\
\hline
\end{longtable}
}
\noindent
We now extract special signatures from Table \ref{max-irred}.  In order to
avoid redundancy we consider $SO(n)$ only for $n\geqq 3$, $SU(n)$ and $U(n)$ 
only for $n\geqq 2$, and $Sp(n)$ only for $n\geqq 1$.

\begin{corollary}\label{Lorentz1}
The Lorentz cases, signature of the form $(p-1,1)$ in 
{\rm Table \ref{max-irred}}, all are weakly symmetric.  In addition
to their invariant Lorentz metrics, each has 
invariant weakly symmetric Riemannian metrics: 

Case 4.  $H=U(1)\cdot SO(n)$ with $G$--invariant metric on $G/H$ of signature 
$(2n,1)$ 

Case 5. $H=SU(n)$ and $H=U(n)$, each with $G$--invariant metric on $G/H$ of 
signature $(2n^2+n,1)$

Case 7. $H=SU(n)$ with $G$--invariant metric on $G/H$ of signature $(2n,1)$

Case 10. $H=U(n)$ with $G$--invariant metric on $G/H$ of signature $(n^2+n,1)$

Case 11. $H=SU(n)$ and $H=U(n)$, each with $G$--invariant metric on $G/H$ of 
signature $(n^2-n,1)$

Case 12.  $H=U(1)\cdot Spin(7)$ with $G$--invariant metric on $G/H$ of 
signature $(23,1)$

Case 13.  $H=U(1)\cdot Spin(9)$ with $G$--invariant metric on $G/H$ of 
signature $(32,1)$

Case 14.  $H=(U(1)\cdot)Spin(10)$ with $G$--invariant metric on $G/H$ of 
signature $(32,1)$

Case 15.  $H=U(1)\cdot G_2$ with $G$--invariant metric on $G/H$ of signature 
$(14,1)$

Case 16.  $H=U(1)\cdot E_6$ with $G$--invariant metric on $G/H$ of signature 
$(54,1)$

Case 19.  $H=(U(1)\cdot)(SU(m)\cdot SU(n))$ with $G$--invariant metric on 
$G/H$ of signature $(2mn,1)$
 
Case 21.  $H=(U(1)\cdot) (Sp(2)\cdot SU(n))$ with $G$--invariant metric on 
$G/H$ of signature $(16n,1)$
 
Case 23.  $H=U(3)\cdot Sp(n)$ with 
$G$--invariant metric on $G/H$ of signature $(12n,1)$

\end{corollary}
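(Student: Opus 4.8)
The plan is to read the corollary directly off Table \ref{max-irred}, using that weak symmetry is inherited from the Riemannian model by Definition \ref{family-def}. Each row of Table \ref{max-irred} sits in a real form family $\{\{G_r/H_r\}\}$ whose model $G_r/H_r$ appears in Vinberg's Table \ref{vin-table}, and every such model is weakly symmetric Riemannian save entry $9$ with $H_r=Sp(n)$, which is absent from the cases below. In each listed case $H$ is compact, so the relevant family member is $G_r/H_r$ itself and is weakly symmetric by Definition \ref{family-def}. For the Lorentz metric I would argue separately that it, too, is weakly symmetric: it is the invariant Riemannian metric of $G_r/H_r$ with its sign reversed on a single $\Ad(H_r)$--invariant line $\ell$ in the center $\gz$. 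Every base--point--fixing isometry of the nilmanifold acts by a metric automorphism of $\gn$, hence preserves the grading $\gn=\gz+\gv$; the weak--symmetry maps moreover fix $\ell$ (the unique distinguished $1$--dimensional central summand), so they remain isometries of the Lorentz metric and furnish the required $s_{x,\xi}$.

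Next I would identify which rows admit such a Lorentz metric. A signature $(p-1,1)$ on $\gn=\gz+\gv$ forces $\gv$ definite --- hence $H$ compact in a definite representation --- and forces $\gz$ to have signature $(\dim\gz-1,1)$; starting from the positive definite Riemannian center this is obtained precisely by negating one $\Ad(H)$--invariant line. So I would scan Table \ref{vin-table} for compact $H$ carrying a distinguished $1$--dimensional invariant central summand, namely the Heisenberg direction $\Im\C$ or an independent $\R$. This selects Cases $4,5,7,10$--$16,19,21,23$ and discards those whose center is higher dimensional with no such removable line: Cases $1,2,3,6,9,17,18$, where $\gz$ is respectively $\gs\go(n)$, the spin module $\R^7$, $\Im\O$, $\Lambda^2_\R(\C^n)$, and the $Sp$--type centers.

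The signatures are then bookkeeping: $p=\dim_\R\gv+\dim_\R\gz$ and the metric has signature $(p-1,1)$. For example Case $4$ has $\gn=\C^n+\Im\C$ of dimension $2n+1$, giving $(2n,1)$; Case $5$ has $\gn=\C^n+\Lambda^2_\R(\C^n)+\Im\C$ of dimension $2n^2+n+1$, giving $(2n^2+n,1)$; Cases $10$ and $11$ give $(n^2+n,1)$ and $(n^2-n,1)$; and the remaining spin, $G_2$, $E_6$, and tensor cases give the further values $(23,1)$, $(32,1)$, $(14,1)$, $(54,1)$, $(2mn,1)$, $(16n,1)$, $(12n,1)$ in the list.

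The hard part is the boundary of this enumeration. The delicate rows are Cases $8,20,22$, where $\gz=\gu(n)$ or $\gu(2)$: the trace line $\R\cdot iI$ is a $1$--dimensional $\Ad(H)$--invariant subspace, so a sign reversal does produce an invariant Lorentz metric, yet these cases are excluded. To justify the exclusion I would analyze the role of $\R\cdot iI$. In Cases $5$ and $12$ the line $\Im\C$ (resp.\ $\R$) is a genuine direct central summand coming from an independent Heisenberg bracket, so negating it is a central reduction compatible with all the weak--symmetry isometries produced by Proposition \ref{factors}. In the $\gu(n)$ cases the line $\R\cdot iI$ is instead the trace part of $[\C^n,\C^n]=\gu(n)$ and is coupled to the remaining central directions; here one must verify, using the structural decomposition of Section \ref{sec2} together with the explicit descriptions of $\gu(r,s)$ and the split algebras in Section \ref{sec3}, that no invariant Lorentz metric with the center split off is weakly symmetric. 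Pinning down this dichotomy --- and confirming it is the sole source of omitted cases --- is where the real work lies.
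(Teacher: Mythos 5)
Your positive enumeration coincides with what the paper actually does. Corollary \ref{Lorentz1} receives no independent proof there: it is read directly off Table \ref{max-irred}, with weak symmetry coming for free from Definition \ref{family-def} together with the remark following Table \ref{vin-table} that every Vinberg entry except entry 9 with $H_r=Sp(n)$ is weakly symmetric (and that entry does not occur in the Lorentz list), and with the signatures obtained by exactly the dimension counts you perform. Your selection of Cases 4, 5, 7, 10--16, 19, 21, 23 and the stated signatures agree with the paper.

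The genuine gap is your last paragraph, which you yourself flag as ``where the real work lies'' and leave open --- and the route you propose for closing it cannot succeed. You are right that in Cases 8, 20, 22 the center $\gu(n)$ (resp.\ $\gu(2)$) splits under $\Ad(H)$ as $\gs\gu(n)\oplus\R\,iI$, so reversing the sign on the trace line does give an invariant Lorentz metric. But you cannot ``justify the exclusion'' by showing such a metric fails to be weakly symmetric. First, in the paper's framework weak symmetry is a property of the member of the real form family (Definition \ref{family-def}), independent of which invariant metric is chosen, so for compact $H$ it holds no matter what. Second, even in the genuine isometric sense, your own transfer argument applies verbatim to these cases: the isometries realizing weak symmetry of the Riemannian model in Case 8 may be taken of the form $(z,v)\mapsto(g\bar{z}g^{-1},g\bar{v})$ with $g\in U(n)$, and both $\Ad(g)$ and conjugation preserve the splitting $\gs\gu(n)\oplus\R\,iI$, hence these maps are isometries of the sign-reversed metric as well. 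So the Lorentz metrics you exhibit in Cases 8, 20, 22 are every bit as weakly symmetric as those in your list; the dichotomy you hope to extract from Sections \ref{sec2} and \ref{sec3} does not exist.

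The actual reason those cases are absent from the corollary is bookkeeping, not geometry: Table \ref{max-irred} records $\gz=\gu(r,s)$ as a single summand carrying a single signature $(r^2+s^2,2rs)$, and the corollary enumerates only the signatures obtainable by flipping signs of summands \emph{as listed in the table} (the paper's stated convention on $(a,b)\oplus(c,d)$). By adopting the stronger reading ``all invariant Lorentz metrics,'' your proposal sets itself the task of proving exclusions that are false under that reading. A smaller instance of the same over-claim: your assertion that signature $(p-1,1)$ ``forces $\gv$ definite'' is not a priori true --- a row with $\gv$ Lorentz and $\gz$ definite would also qualify, e.g.\ $SO(n-1,1)$ in Case 1 --- it holds only after inspecting the table, where every such row turns out to have indefinite center.
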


\begin{corollary}\label{t-Lorentz1}
The complexifications of the Lorentz cases listed in {\rm Corollary
\ref{Lorentz1}} all are of trans--Lorentz signature $(p-2,2)$.
The trans--Lorentz cases, signature of the form $(p-2,2)$ in 
{\rm Table \ref{max-irred}}, all are weakly symmetric, and they are 
given as follows.

Case 1.  $H=SO(2,1)$ with $G$--invariant metric on $G/H$ of signature $(4,2)$

Case 4.  $H=U(1)\cdot SO(n-1,1)$ with $G$--invariant metric on $G/H$ of 
signature $(2n-1,2)$

Case 7.  $H=SU(n-1,1)$ with $G$--invariant metric on $G/H$ of 
signature $(2n-1,2)$

Case 10.  $H=U(1,1)$ with $G$--invariant metric on $G/H$ of signature $(5,2)$

Case 11. $H= SU(2,1)$ and $H = U(2,1)$, each with $G$--invariant metric on 
$G/H$ of signature $(5,2)$

\end{corollary}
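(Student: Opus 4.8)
The plan is to prove the two assertions separately: first that every space in question is weakly symmetric, and then the exact signature enumeration. Weak symmetry is immediate from Definition~\ref{family-def}, by which a member of a real form family $\{\{G_r/H_r\}\}$ is \emph{weakly symmetric} precisely when its Riemannian base $G_r/H_r$ is a weakly symmetric Riemannian nilmanifold. Every base occurring in Table~\ref{max-irred} is one of Vinberg's maximal irreducible commutative Riemannian nilmanifolds of Table~\ref{vin-table}, all of which are weakly symmetric except entry~$9$ with $H_r=Sp(n)$; since none of the trans--Lorentz forms below arises from that exceptional entry, all of them---and likewise the complexifications $(G_r)_\C/(H_r)_\C$, which lie in the same families---are weakly symmetric by definition. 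So nothing beyond signature bookkeeping remains.

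For the first sentence I would argue as follows. Inspection of Table~\ref{max-irred} shows that in every Lorentz case of Corollary~\ref{Lorentz1} the compact group $H$ acts trivially on a one--dimensional central summand of $\gz$ (namely $\Im\C$ or $\R$), and the invariant Lorentz metric is the one taking this central line to be the unique timelike direction, contributing $(0,1)$. Complexifying that distinguished central line---replacing the timelike real line by the underlying real $2$--plane of a negative definite Hermitian complex line, so that its contribution becomes $(0,2)$---while leaving the positive definite $\gv$ and the rest of $\gz$ unchanged, sends the signature $(p-1,1)$ to $(p-1,2)$, which is trans--Lorentz of the form $(p'-2,2)$ with $p'=p+1$. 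The point requiring care here is that this is a complexification of the distinguished central direction only, and \emph{not} the full complexification $(G_r)_\C/(H_r)_\C$; the latter is neutral (as the remark preceding Table~\ref{heis-comm} records, $\gv$ and $\gz$ there acquire signatures of split type), so one must be precise about which construction produces the trans--Lorentz partner.

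The enumeration itself I would carry out by direct inspection of Table~\ref{max-irred}. By Corollary~\ref{fam-ps-riem} the invariant metric on any $G/H$ in the family is the orthogonal sum of invariant forms on $\gv$ and on $\gz$, and one may reverse the sign on each $\Ad(H)$--irreducible summand independently (this is the content of the convention $(a,b)=(b,a)$). Writing $D=\dim_\R\gn$ and the chosen signature as $(P,D-P)$ with $P=\sum_i p_i$ and $p_i\in\{a_i,b_i\}$ the index of the $i$th summand $(a_i,b_i)$, a trans--Lorentz metric exists iff $P=2$ is attainable. A definite summand contributes $0$ to the minority side, while an indefinite summand contributes at least its smaller index, so $P=2$ forces the indefinite invariant summands of $\gv\oplus\gz$ to have minority indices summing to exactly $2$. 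In particular no \emph{compact} $H$ can occur: there $\gv$ and $\gz$ are definite, the only small central summand has dimension one (giving $P=1$, i.e. Lorentz), and the next summand is too large to yield $P=2$. Scanning the non--compact forms, the condition singles out precisely $SO(2,1)$ in Case~$1$ (where $\gv=\R^{2,1}$ and $\gz=\gs\go(2,1)$ combine, after a sign flip, to $(4,2)$), $U(1)\cdot SO(n-1,1)$ in Case~$4$ and $SU(n-1,1)$ in Case~$7$ (where $\gv=\C^{n-1,1}$ of index $(2n-2,2)$ together with $\Im\C$ gives $(2n-1,2)$), and $U(1,1)$ in Case~$10$ together with $SU(2,1),U(2,1)$ in Case~$11$ (each giving $(5,2)$). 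For every other entry I would check that the minority index is forced to be at least $3$: the split real forms give $\gv$ of neutral index $(d,d)$; the forms $SO(r,s),SU(r,s),U(r,s),Sp(r,s)$ of larger minority, the large central blocks $\gs\go(r,s)$, $\gu(r,s)$, $\Lambda^2$, $S^2$, $\gs\gp$, and the tensor--product entries governed by Lemma~\ref{more-prods} each contribute at least $3$ (often an even number $\geq 4$) to whichever side is chosen. The conventions $n\geq 3$ for $SO(n)$ and $n\geq 2$ for $SU(n),U(n)$ are needed to discard the degenerate low--rank coincidences that would otherwise masquerade as extra trans--Lorentz cases.

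The main obstacle is completeness of this scan rather than any single hard step: one must run through all twenty--three cases together with their product and quaternionic sub--entries, account correctly for the independent sign reversals on every irreducible piece of $\gz$, and verify that no large--rank or tensor--product entry accidentally admits a sign assignment with minority index $2$. The secondary delicate point, flagged above, is justifying the index--raising under complexification of the distinguished central line in the first sentence, as opposed to the neutral behaviour of the full complexification; everything else reduces to the signature arithmetic already recorded in Table~\ref{max-irred}.
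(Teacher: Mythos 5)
Your handling of the two substantive claims---weak symmetry and the enumeration---is correct and is essentially the paper's own (implicit) argument: the paper gives no proof beyond inspection of Table \ref{max-irred}, weak symmetry is automatic from Definition \ref{family-def} since the underlying Vinberg entries $1,4,7,10,11$ are weakly symmetric, and your ``minority index equals $2$'' scan over the table's blocks reproduces exactly the five listed families, including the observation that compact $H$ can never occur (the definite blocks in the table only produce minority index $0$, $1$, or at least $3$).

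The gap is in your treatment of the first sentence. The operation you propose---replacing the one--dimensional timelike central summand by a negative definite Hermitian line while freezing $\gv$ and the rest of $\gz$---is not the complexification of anything: it produces an object of dimension $p+1$ and signature $(p-1,2)$ which does not lie in the real form family $\{\{G_r/H_r\}\}$ and is not a space appearing anywhere in the paper's framework, so your first paragraph proves a statement about a different space with the wrong dimension and the wrong signature. The complexification $(G_r)_\C/(H_r)_\C$ of a Lorentz case of dimension $p$ has real dimension $2p$, and the only reading under which the sentence (and its twin in Corollary \ref{t-Lorentz2}) is dimensionally coherent is the doubling rule for complexified quadratic spaces: the Hermitian (sesquilinear) extension of a form of signature $(a,b)$, realified, has signature $(2a,2b)$, so Lorentz $(p-1,1)$ passes to $(2p-2,2)$, which is trans--Lorentz in dimension $2p$. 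Your side observation is in fact the genuine subtlety here, and you should have pushed on it rather than abandoned it: the complex--\emph{bilinear} extensions, which are the $(G_r)_\C$--invariant ones and the ones the paper records as split $(n,n)\oplus(1,1)$ before Table \ref{heis-comm}, can never give trans--Lorentz signature, whereas the Hermitian extension that does give $(2p-2,2)$ is invariant only under the subgroup $N_\C\rtimes H_r$ of $(G_r)_\C$. The correct resolution of the tension you noticed is this bilinear--versus--sesquilinear distinction applied to the whole metric, not a partial complexification of a single central line.
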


\section{Indecomposable Commutative Nilmanifolds}\label{sec6}
\setcounter{equation}{0}
In this section we broaden the scope of Table \ref{max-irred} from
irreducible to indecomposable commutative spaces --- subject to a few
technical conditions.  This is based on a classification of Yakimova 
(\cite{Y2005}, \cite{Y2006}; or see \cite{W2007}). It settles the case 
where $(N\rtimes H,H)$ is indecomposable, principal, maximal and 
$Sp(1)$--saturated.  

Since $G = N\rtimes H$ acts almost--effectively on $M = G/H$, the
centralizer of $N$ in $H$ is discrete, in other words the representation
of $H$ on $\gn$ has finite kernel.  (In the notation of 
\cite[Section 1.4]{Y2006} this says $H = L = L^\circ$ and $P = \{1\}$.)  
That simplifies the general definitions \cite[Definition 6]{Y2006} of 
{\em principal} and \cite[Definition 8]{Y2006} of {\em $Sp(1)$--saturated}, 
as follows.  Decompose $\gv$ as a sum $\gw_1 \oplus \dots \oplus \gw_t$ of 
irreducible $\Ad(H)$--invariant subspaces.  Then $(G,H)$ is {\bf principal}
if $Z_H^0 = Z_1 \times \dots \times Z_m$ where $Z_i \subset GL(\gw_i)$,
in other words $Z_i$ acts trivially on $\gw_j$ for $j \ne i$.
Decompose $H = Z_H^0 \times H_1 \times \dots \times H_m$ where the $H_i$ are
simple.  Suppose that whenever some $H_i$ acts nontrivially on some $\gw_j$
and $Z_H^0 \times \prod_{\ell \ne i} H_\ell$ is irreducible on $\gw_j$, it
follows that $H_i$ is trivial on $\gw_k$ for all $k \ne j$.  Then
$H_i \cong Sp(1)$ and we say that $(G,H)$ is {\bf $Sp(1)$--saturated}.
The group $Sp(1)$ will be more visible in the definition when we
extend the definition to the cases where $H \ne L$.

In the Table \ref{indecomp} below, $\gh_{n;\F}$ is the Heisenberg algebra
$\Im\F + \F^n$ of real dimension $(\dim_\R\F - 1)+ n\dim_\R\F$.  Here
$\F$ is the real, complex, quaternion or octonion algebra over $\R$, 
$\Im\F$ is its imaginary component, and
$$
\gh_{n;\F} = \Im \F + \F^n \text{ with product }
	[(z_1,v_1),(z_2,v_2)] = (\Im(v_1\cdot v_2^*),0)
$$
where the $v_i$ are row vectors and $v_2^*$ denotes the conjugate ($\F$
over $\R$) transpose of $v_2$\,.  It is the 
Lie algebra of the (slightly generalized) Heisenberg group $H_{n;\F}$\,.
Also in the table, in the listing for $\gn$
the summands in double parenthesis ((..)) are the subalgebras
$[\gw,\gw] + \gw$ where $\gw$ is an $H$--irreducible subspace of $\gv$
with $[\gw,\gw] \ne 0$, and the summands not in double parentheses are 
$H$--invariant subspaces $\gw$ with $[\gw,\gw] = 0$.  The center $\gz =
[\gn,\gn] + \gu$ where $\gu$ is the sum of those $\gw$ with $[\gw,\gw] = 0$.
Thus $\gn = \gz + \gv$ where the center $\gz$ is
the sum of $[\gn,\gn]$ with those summands listed for $\gn$ that are {\it not}
enclosed in double parenthesis ((..)).

As before, when we write $m/2$ it is assumed that 
$m$ is even, and similarly $n/2$ requires that $n$ be even.   Further
$k+\ell = m$ and $r+s = n$ where applicable.  In the signatures column we
write $\gn'$ for $[\gn,\gn]$.

\addtocounter{equation}{1}
\begin{longtable}{|r|l|l|}
\caption*{\bf \small
Table \thetable \quad Maximal Indecomposable Principal
Saturated Nilpotent Gelfand Pairs $(N\rtimes H,H)$ \\
\centerline{$N$ Nonabelian, Where the Action of $H$ on
$\gn/[\gn,\gn]$ is Reducible}} \label{indecomp} \\
\hline
  & Group $H$, Algebra $\gn$ & Signatures \\
\hline
\hline
\endfirsthead
\multicolumn{3}{l}{{\normalsize \textit{Table \thetable\, continued from
        previous page $\dots$}}} \\ \hline
  & Group $H$, Algebra $\gn$ & Signatures \\
\hline
\hline
\endhead
\hline \multicolumn{3}{r}{{\normalsize \textit{$\dots$ Table \thetable\,
        continued on next page}}} \\
\endfoot
\hline
\endlastfoot

1 & \begin{tabular}{l} $U(r,s)$ \\ $((\gh_{r+s;\C})) 
		\oplus \gs\gu(r,s)$ \end{tabular}
  & \begin{tabular}{l} $\gv: (2r,2s)$ \\ $\gu: (r^2+s^2-1,2rs)$ 
	\\ $\gn': (1,0)$ \end{tabular} \\ \hhline{|~|-|-|}
  & \begin{tabular}{l} $GL(n;\R)$ \\ $((\gh_{n;\C}))
		\oplus \gs\gl(n;\R)$ \end{tabular}
  & \begin{tabular}{l} $\gv: (n,n)$
	\\ $\gu: (n(n-1)/2, n(n+1)/2-1)$
		\\ $\gn': (0,1)$ \end{tabular} \\ \hline\hline	

2 & \begin{tabular}{l} $U(k,\ell),\, (k,\ell) = (4,0) \text{ or } (2,2)$ \\
           $((\Im\C + \Lambda^2(\C^{k,\ell}) +\C^{k,\ell})) 
		\oplus \Lambda^2(\R^{k,\ell})$ \end{tabular}
	& \begin{tabular}{l} $\gv: (2k,2\ell)$ 
		\\ $\gu: (k(k-1)/2 +\ell(\ell -1)/2,k\ell)$ 
		\\ $\gn': (1,0)\oplus ((12,0) \text{ or } (4,8))$ 
		\end{tabular} \\ \hhline{|~|-|-|}
  & \begin{tabular}{l} $GL(4;\R)$ \\ 
	  $((\R + \Lambda^2(\C^{2,2}) + \C^{2,2})) \oplus \R^{3,3}$\end{tabular}
	& \begin{tabular}{l}  $\gv: (4,4)$ 
		\\ $\gu: (3,3) \text{ and }
		\gn': (0,1)\oplus (6,6)$
		\end{tabular} \\ \hline\hline

3 & \begin{tabular}{l} $U(1)\cdot SU(r,s) \cdot U(1),\, r+s=n$ \\
        $((\gh_{n;\C}))\oplus \gh_{n(n-1)/2;\C}))$\end{tabular}
        & \begin{tabular}{l} $\gv: (2r,2s)\oplus (r^2-r+s^2-s,2rs)$ \\
		$\gu: 0$ \text{ and }
		$\gn': (1,0) \oplus (1,0)$ \end{tabular}
		\\ \hhline{|~|-|-|}
  & \begin{tabular}{l} $\R^+\cdot SL(n;\R)\cdot \R^+$ \\
        $((\gh_{n;\C}))\oplus +((\gh_{n(n-1)/2;\C}))$\end{tabular}
        & \begin{tabular}{l} $\gv: (n,n)\oplus 
			(\frac{n(n-1)}{2},\frac{n(n-1)}{2})$\\
		$\gu: 0$ \text{ and }
                $\gn': (0,1)\oplus (0,1)$\end{tabular} 
       \\ \hline\hline

4 & \begin{tabular}{l} $SU(2k,2\ell),\, (k,\ell) = (2,0) \text{ or } (1,1)$ \\
	$((\Im\C + \Re\H^{(k,\ell)\times (k,\ell)} + \C^{2k,2\ell}))$ \\
		$\qquad \oplus \R^{k(2k-1)+\ell(2\ell -1),4k\ell}$\end{tabular}
	& \begin{tabular}{l} $\gv: (4k,4\ell)$ \\
		$\gu: (k(2k-1)+\ell(2\ell -1),4k\ell)$\\
		$\gn': (1,0) \oplus ((6,0) \text{ or } (2,4))$ \end{tabular}
	\\  \hhline{|~|-|-|}
  & \begin{tabular}{l} $SL(4;\R)$ \\ $((\R + \Re\H_{sp}^{2\times 2} 
		+ \R^{4,4}))\oplus \R^{3,3} $\end{tabular}
	& \begin{tabular}{l} $\gv: (4,4)$\\
		$\gu: (3,3)$\text{ and }
		$\gn': (0,1) \oplus (4,2)$ \end{tabular}  
	\\ \hline\hline

5 & \begin{tabular}{l}$ U(k,\ell)\times U(2r,2s), 
		\begin{smallmatrix} k+\ell=2\\ r+s=2\end{smallmatrix}$
		\\ $((\gu(k,\ell) + \C^{(k,\ell)\times (2r,2s)}))$ \\ 
			$\qquad \oplus \R^{r(2r-1)+s(2s-1), 4rs}$ \end{tabular}
	& \begin{tabular}{l} $\gv: (2kr+2\ell s, 2ks+2\ell r)$ \\
		$\gu: (r(2r-1)+s(2s-1),4rs)$ \\
		$\gn': (2r,2s)$ \end{tabular}\\  \hhline{|~|-|-|}
  & \begin{tabular}{l} $GL(2;\R)\cdot  GL(4;\R)$ \\
		$((\gg\gl(2;\R) + \R^{1,1} \otimes \R^{2\times 4})) 
			\oplus \R^{3,3}$ \end{tabular}
	& \begin{tabular}{l} $\gv: (8,8)$ \\
		$\gu: (3,3)$ \text{ and }
		$\gn': (1,3)$ \end{tabular} \\ \hhline{|~|-|-|}
  & \begin{tabular}{l} $GL(1;\H)\cdot GL(2;\H)$ \\
		$((\gg\gl(1;\H) + \R^{1,1} \otimes_\R \H^2)) 
			\oplus \R^{5,1}$ \end{tabular}
	& \begin{tabular}{l} $\gv: (8,8)$ \\
                $\gu: (5,1)$ \text{ and }
                $\gn': (3,1)$ \end{tabular}
	\\ \hline\hline

6 &  \begin{tabular}{l} $S(U(2k,2\ell)\times U(r,s)),\, 
		\begin{smallmatrix} k+\ell = 2 \\ r+s=n \end{smallmatrix}$\\
		$((\gh_{4n;\C})) \oplus 
			\R^{k(2k-1)+\ell(2\ell -1),4k\ell}$ \end{tabular}
	&\begin{tabular}{l} $\gv: (8r,8s)$ \\
                $\gu: (k(2k-1)+\ell (2\ell -1),4k\ell)$ \\
                $\gn': (1,0)$ \end{tabular} \\ \hhline{|~|-|-|}

   & \begin{tabular}{l} $S(GL(4;\R) \cdot GL(n;\R))$ \\
                $((\R + \R^{1,1}\otimes \R^{4\times n}))
		\oplus \R^{3,3}$ \end{tabular}
	&\begin{tabular}{l} $\gv: (4n,4n)$ \\
		$\gu: (3,3)$ \text{ and }
		$\gn': (0,1)$  \end{tabular} \\ \hhline{|~|-|-|}
   & \begin{tabular}{l} $S(GL(2;\H) \cdot GL(n/2;\H))$ \\
		$((\R + \R^{1,1}\otimes_{\R}\H^{2\times n/2})) \oplus\R^{5,1}$
			\end{tabular}
	&\begin{tabular}{l} $\gv: (4n,4n)$ \\
                $\gu: (5,1)$ \text{ and }
                $\gn': (0,1)$  \end{tabular} \\ \hline\hline

7 & \begin{tabular}{l} $U(k,\ell)\cdot U(r,s),\, 
		\begin{smallmatrix} k+\ell = m \\ r+s=n \end{smallmatrix}$\\
		$((\gh_{mn;\C})) \oplus ((\gh_{m;\C}))$ \end{tabular}
	&\begin{tabular}{l} $\gv: (2kr+2\ell s,2ks+2\ell r)\oplus (2k,2\ell)$\\
                $\gu: 0$ \text{ and }
                $\gn': (1,0)\oplus (1,0)$  \end{tabular} \\ \hhline{|~|-|-|}

   & \begin{tabular}{l} $GL(m;\R) \cdot GL(n;\R)$ \\
               $((\R + \R^{1,1}\otimes \R^{m\times n}))
                \oplus ((\R + \R^{m,m}))$ \end{tabular}
	&\begin{tabular}{l} $\gv: (mn,mn)\oplus (m,m)$\\
                $\gu: 0$ \text{ and }
                $\gn': (0,1)\oplus (0,1)$  \end{tabular} \\ \hline\hline

8 & \begin{tabular}{l} $U(1)\cdot Sp(r,s)\cdot U(1),\, r+s=n$ \\
		$((\gh_{2n;\C})) \oplus ((\gh_{2n;\C}))$ \end{tabular}
	& \begin{tabular}{l} $\gv: (4r,4s)\oplus (4r,4s)$ \\
                $\gu: 0$ \text{ and }
                $\gn': (1,0)\oplus (1,0)$  \end{tabular} \\ \hhline{|~|-|-|}
  & \begin{tabular}{l} $\R^+ \cdot Sp(r,s)\cdot U(1),\, r+s=n$ \\
                $((\R + \R^{2n,2n})) 
			\oplus ((\gh_{2n;\C}))$ \end{tabular}
        & \begin{tabular}{l} $\gv: (4r,4s)\oplus (4r,4s)$ \\
                $\gu: 0$ \text{ and }
                $\gn': (0,1)\oplus (1,0)$  \end{tabular} \\ \hhline{|~|-|-|}
  & \begin{tabular}{l} $\R^+ \cdot Sp(r,s)\cdot \R^+,\, r+s=n$ \\
                $((\R + \R^{2n,2n})) 
		  \oplus ((\R + \R^{2n,2n}))$ \end{tabular}
        & \begin{tabular}{l} $\gv: (4r,4s)\oplus (4r,4s)$ \\
                $\gu: 0$ \text{ and }
                $\gn': (1,0)\oplus (1,0)$  \end{tabular} \\ \hhline{|~|-|-|}
  & \begin{tabular}{l} $U(1)\cdot Sp(n;\R)\cdot U(1)$ \\
		$((\gh_{2n;\C})) \oplus ((\gh_{2n;\C}))$ \end{tabular}
        & \begin{tabular}{l} $\gv: (2n,2n)\oplus (2n,2n)$ \\
		$\gu: 0$ \text{ and }
                $\gn': (1,0)\oplus (1,0)$  \end{tabular} \\ \hline\hline

9 & \begin{tabular}{l} $Sp(1)\cdot Sp(r,s)\cdot \{U(1),\, \R^+\}$ \\
		$((\gh_{n;\H})) \oplus ((\gh_{2n;\C})),\, r+s=n$ \end{tabular}
	& \begin{tabular}{l} $\gv: (4r,4s)\oplus (4r,4s)$ \\
                $\gu: 0$ \text{ and }
                $\gn': (3,0)\oplus (1,0)$  \end{tabular} \\ \hhline{|~|-|-|}
  & \begin{tabular}{l} $Sp(1;\R)\cdot Sp(n;\R)\cdot U(1)$ \\
                $((\gh_{n;\H})) \oplus ((\gh_{2n;\C}))$ \end{tabular}
        & \begin{tabular}{l} $\gv: (2n,2n)\oplus (2n,2n)$\\
                $\gu: 0$ \text{ and }
                $\gn': (1,2)\oplus (1,0)$  \end{tabular} \\ \hline\hline

10 & \begin{tabular}{l} $H_r = Sp(1)\cdot Sp(r,s)\cdot Sp(1)$ \\
		$((\gh_{n;\H})) + ((\gh_{n;\H})),\, r+s=n$ \end{tabular}
	& \begin{tabular}{l} $\gv: (4r,4s)\oplus (4r,4s)$ \\
                $\gu: 0$ \text{ and }
                $\gn': (3,0)\oplus (3,0)$  \end{tabular} \\ \hhline{|~|-|-|}
  & \begin{tabular}{l} $Sp(1;\R)\cdot Sp(n;\R)\cdot Sp(1;\R)$ \\
                $((\gh_{n;\H})) \oplus ((\gh_{n;\H}))$ \end{tabular}
        & \begin{tabular}{l} $\gv: (2n,2n)\oplus (2n,2n)$\\
                $\gu: 0$ \text{ and }
                $\gn': (1,2)\oplus (1,2)$  \end{tabular} \\ \hline\hline

11 & \begin{tabular}{l} $Sp(k,\ell)\cdot\{Sp(1),U(1),\{1\}\}\cdot Sp(r,s)$\\
     $((\gh_{n;\H})) \oplus \H^{(k,\ell)\times (r,s)},
	\begin{smallmatrix}k+\ell=m\\ r+s=m \end{smallmatrix}$ \end{tabular}
	& \begin{tabular}{l} $\gv: (4k,4\ell)$\\
		$\gu: (4kr+4\ell s, 4ks+4\ell r)$ \\
		$\gn': (3,0)$ \end{tabular} \\ \hhline{|~|-|-|}
   & \begin{tabular}{l} $Sp(m;\R)\cdot\{Sp(1;\R),U(1)\}\cdot Sp(n;\R)$\\
		$((\gh_{m;\H})) + \H^{m\times n}$ \end{tabular}
	&\begin{tabular}{l} $\gv: (2m,2m)$ \\
		$\gu: (2mn,2mn)$ \text{ and }
		$\gn': (2,1)$ \end{tabular} \\ \hline\hline

12 & \begin{tabular}{l} $Sp(k,\ell)\cdot\{Sp(1),U(1),\{1\}\},\, k+\ell=m$  \\
        $((\gh_{m;\H})) \oplus \Re \H^{(k,\ell)\times (k,\ell)}_0$ \end{tabular}
	& \begin{tabular}{l} $\gv: (4k,4\ell)$\\
		$\gu: (2m^2-m-1-4k\ell,4k\ell)$\\
		$\gn': (3,0)$ \end{tabular} \\ \hhline{|~|-|-|}
   & \begin{tabular}{l} $H=Sp(m;\R)\cdot\{Sp(1;\R),U(1)\}$ \\
	$((\gh_{m;\H})) \oplus \Re \H^{m\times m}_{sp,0}$ \end{tabular}
	& \begin{tabular}{l} $\gv: (2m,2m)$\\
		$\gu: (m^2-1,m^2-m)$ \\
		$\gn': (2,1)$ \end{tabular} \\ \hline\hline

13 & \begin{tabular}{l} $Spin(k,\ell)\cdot \{\{1\},\, SO(r,s)\}$ \\
        $((\gh_{1;\O})) \oplus \R^{(k,\ell)\times (r,s)}$ \\ 
		$k+\ell = 7, \ell \leqq k,  
			(r,s) = (2,0) \text{ or } (1,1)$ 
		\end{tabular}
	& \begin{tabular}{l} $\gv: (q,8-q),\, q=2[\frac{k+1}{2}]$\\
		$\gu: (rk+s\ell,r\ell+sk)$\\
		$\gn': (k,\ell)$ \end{tabular} \\ \hline\hline

14 & \begin{tabular}{l} $U(1)\cdot Spin(k,\ell), \, 
		k+\ell = 7, \ell \leqq k$\\
	$((\gh_{7;\C})) \oplus \R^{q,8-q},\, q=2[\frac{k+1}{2}]$
		\end{tabular}
   &  \begin{tabular}{l} $\gv: (2k, 2\ell)$ \\
		$\gu: (q,8-q)$ \text{ and }
		$\gn': (1,0)$ \end{tabular} \\ \hhline{|~|-|-|}
  & \begin{tabular}{l} $\R^+\cdot Spin(k,\ell), \, 
                k+\ell = 7, \ell \leqq k$\\
        $((\R + \R^{1,1}\otimes_\R \R^{k,\ell})) 
		\oplus \R^{q,8-q},\, q=2[\frac{k+1}{2}]$ \end{tabular}
   &  \begin{tabular}{l} $\gv: (7, 7)$ \\
                $\gu: (q,8-q)$ \text{ and }
                $\gn': (0,1)$ \end{tabular} \\ \hline\hline

15 & \begin{tabular}{l} $U(1)\cdot Spin(k,\ell),\, 
		k+\ell = 7, \ell \leqq k$\\
	$((\gh_{8;\C})) \oplus \R^{k,\ell}$ \end{tabular}
   & \begin{tabular}{l} $\gv: (2q,16-2q),\, q=2[\frac{k+1}{2}]$ \\
		$\gu: (k,\ell)$ \text{ and }
		$\gn': (1,0)$ \end{tabular} \\ \hhline{|~|-|-|}
   & \begin{tabular}{l} $\R^+\cdot Spin(k,\ell),\, 
                k+\ell = 7, \ell \leqq k$\\
	$((\R + \R^{1,1}\otimes_\R \R^{q,8-q})) 
        	\oplus \R^{k,\ell},\, q=2[\frac{k+1}{2}]$ \end{tabular}
   & \begin{tabular}{l} $\gv: (8,8)$ \\
                $\gu: (k,\ell)$ \text{ and }
                $\gn': (0,1)$ \end{tabular} \\ \hline\hline

16 & \begin{tabular}{l} $U(1)\cdot Spin(k,\ell)\cdot U(1),\, 
			k+\ell=8, \ell \leqq k$, \\
             $((\gh_{8;\C})) \oplus ((\gh_{8;\C}))$ \end{tabular}
	& \begin{tabular}{l} $\gv: (2k,2\ell)\oplus(2k,2\ell)$ \\
		$\gu: 0$\text{ and }
		$\gn': (1,0)\oplus(1,0)$ \end{tabular}\\  \hhline{|~|-|-|}
   & \begin{tabular}{l} $\R^+ \cdot Spin(k,\ell)\cdot U(1),\, 
                        k+\ell=8, \ell \leqq k$, \\
             $((\R + \R^{1,1}\otimes \R^{k,\ell})) \oplus 
		((\gh_{8;\C}))$ \end{tabular}
        & \begin{tabular}{l} $\gv: (8,8)\oplus(2k,2\ell)$ \\
                $\gu: 0$\text{ and }
                $\gn': (0,1)\oplus(1,0)$ \end{tabular}\\  \hhline{|~|-|-|}
  & \begin{tabular}{l} $\R^+ \cdot Spin(k,\ell)\cdot \R^+,\, 
                        k+\ell=8, \ell \leqq k$, \\
             $((\R + \R^{1,1}\otimes \R^{k,\ell})) \oplus 
                ((\R + \R^{1,1}\otimes \R^{k,\ell}))$ \end{tabular}
        & \begin{tabular}{l} $\gv: (8,8)\oplus(8,8)$ \\
                $\gu: 0$\text{ and }
                $\gn': (0,1)\oplus(0,1)$ \end{tabular}\\ \hhline{|~|-|-|}
  & \begin{tabular}{l} $U(1)\cdot SO^*(8)\cdot U(1)$ \\
	$((\gh_{8;\C})) \oplus ((\gh_{8;\C}))$ \end{tabular}
	& \begin{tabular}{l} $\gv: (8,8)\oplus(8,8)$ \\
		$\gu: 0$ \text{ and }
		$\gn': (1,0)\oplus(1,0)$ \end{tabular}\\ \hline\hline

17 & \begin{tabular}{l} $U(1)\cdot Spin(2k,2\ell),\, 
		k=3,4,5;\, \ell =5-k$ \\
		$((\gh_{16;\C})) \oplus \R^{2k,2\ell}$ \end{tabular}
	& \begin{tabular}{l} $\gv:  (q,16-q), q=2^{[\tfrac{k+1}{2}]+2}$\\
		$\gu: (2k,2\ell)$ \text{ and }
		$\gn': (1,0)$ \end{tabular}\\ \hhline{|~|-|-|}
	& \begin{tabular}{l} $\R^+ \cdot Spin(2k-1,2\ell+1),
		\begin{smallmatrix}k=3,4,5;\,  \ell = 5-k \end{smallmatrix}$\\
		$((\R + \R^{16,16})) \oplus 
			\R^{2k-1,2\ell+1}$ \end{tabular}
	& \begin{tabular}{l} $\gv: (16,16) $\\
		$\gu: (2k-1,2\ell+1)$\\
		$\gn': (0,1)$ \end{tabular}\\ \hhline{|~|-|-|}
   & \begin{tabular}{l} $U(1)\cdot Spin^*(10)$ \\
		 $((\gh_{16;\C})) \oplus \C^5$ \end{tabular}
	& \begin{tabular}{l} $\gv: (16,16)$\\
		$\gu: (10,0)$\text{ and }
		$\gn': (1,0)$ \end{tabular}\\ \hline\hline	

18 & \begin{tabular}{l} $\{SU(k,\ell),U(k,\ell),U(1)Sp(\tfrac{m}{2})\} 
		\cdot SU(r,s)$ \\
	 $((\gh_{2m;\C})) + \gs\gu(r,s),\, k+\ell=m, r+s=2$\end{tabular}
	& \begin{tabular}{l}  $\gv: (2kr+2\ell s,2ks+2\ell r)$\\
		$\gu: (3-2rs,2rs)$ \text{ and }
		$\gn': (1,0)$ \end{tabular} \\  \hhline{|~|-|-|}
   & \begin{tabular}{l} $\{SL(m;\R),GL(m;\R)\}\cdot SL(2;\R)$\\
	$((\R +\R^{1,1}\otimes\R^{m\times 2})) \oplus \gs\gl(2;\R)$\end{tabular}
	& \begin{tabular}{l}  $\gv: (2m,2m)$ \\
		$\gu: (1,2)$ \text{ and }
		$\gn': (0,1)$ \end{tabular} \\  \hhline{|~|-|-|}
   & \begin{tabular}{l} $\{SL(m/2;\H), GL(m/2;\H)\}\cdot SL(1;\H)$ \\
	$((\R + \H^{m/2,m/2})) \oplus \gs\gl(1;\H)$ \end{tabular}
	& \begin{tabular}{l}  $\gv: (2m,2m)$\\
		$\gu: (3,0)$ \text{ and }
		$\gn': (0,1)$\end{tabular}\\  \hhline{|~|-|-|}
  & \begin{tabular}{l} $Sp(k/2, \ell/2)\cdot GL(2;\R)$ \\
		$((\R + \R^{1,1}\otimes_\R \H^{k/2,\ell/2})) 
			\oplus \gs\gl(2;\R)$\end{tabular}
	& \begin{tabular}{l}  $\gv: (2m,2m)$\\
		$\gu: (1,3)$ \text{ and }
		$\gn': (0,1)$ \end{tabular}\\  \hhline{|~|-|-|}
  & \begin{tabular}{l} $Sp(m/2;\R)\cdot GL(1;\H)$ \\
		$((\gh_{2m;\C})) \oplus \gs\gl(2;\R)$\end{tabular}
        & \begin{tabular}{l} $\gv: (2m,2m)$ \\
		$\gu: (1,2)$ \text{ and }
		$\gn': (0,1)$ \end{tabular} \\  \hline\hline

19 & \begin{tabular}{l} $\{SU(k,\ell),U(k,\ell),U(1)Sp(k/2,\ell/2)\}\cdot $\\
	$\qquad \cdot U(r,s),\, k+\ell = m,  r+s=2 $ \\
	$((\gh_{2m;\C})) \oplus ((\gh_{2;\C}))$ \end{tabular}
   & \begin{tabular}{l} $\gv: (2kr+2\ell s,2ks+2\ell r)\oplus (2r,2s)$ \\
		$\gu: 0$ \text{ and }
		$\gn': (1,0)\oplus (1,0)$ \end{tabular} \\ \hhline{|~|-|-|}
    & \begin{tabular}{l} $\{SL(m;\R),GL(m;\R)\}\cdot GL(2;\R)$ \\
         $((\R + \R^{2m,2m})) \oplus ((\R + \R^{2,2}))$ \end{tabular}
   &\begin{tabular}{l} $\gv: (2m,2m)\oplus(2,2)$ \\
		$\gu: 0$ \text{ and }
		$\gn': (1,0)\oplus (1,0)$ \end{tabular} \\ \hhline{|~|-|-|}

   & \begin{tabular}{l} $\R^+\cdot Sp(k/2,\ell/2)\cdot GL(2;\R)$\\
        $((\Im\C + \C^{2k,2\ell} \otimes_\R\R^{1,1})) \oplus 
			((\Im\C + \R^{2,2}))$ \end{tabular}
     & \begin{tabular}{l} $\gv: (2m,2m)\oplus(2,2)$ \\
		$\gu: 0$\text{ and }
		$\gn': (1,0)\oplus (1,0)$ \end{tabular} \\ \hhline{|~|-|-|}
   & \begin{tabular}{l} $U(1)Sp(m/2;\R)\cdot U(r,s)$ \\
	$((\gh_{2m;\C})) \oplus ((\gh_{2;\C}))$ \end{tabular} 
    &\begin{tabular}{l} $\gv: (2m,2m)\oplus (2r,2s)$ \\
		$\gu: 0$ \text{ and }
		$\gn': (1,0)\oplus (1,0)$ \end{tabular} \\ \hline \hline

20 & \begin{tabular}{l} $\{SU(k,\ell),U(k,\ell),U(1)
			Sp(\tfrac{k}{2},\tfrac{\ell}{2})\}\cdot SU(a,b)\cdot$ \\
             $\qquad\cdot\{SU(r,s),U(r,s),U(1)Sp(\tfrac{r}{2},\tfrac{s}{2})\}$\\
	     $((\gh_{2m;\C})) \oplus ((\gh_{2n;\C})), \begin{smallmatrix}
	         k+\ell = m, a+b = 2, r+s = n \end{smallmatrix}$  \end{tabular}
	&\begin{tabular}{l} $\gv: (2(ak+b\ell),2(a\ell +bk))\oplus $\\
		$\qquad (2(ar+bs),2(as+br))$ \\
	$\gu: 0 \text{ and } \gn': (1,0)\oplus (1,0)$ \end{tabular} 
		\\ \hhline{|~|-|-|}
   & \begin{tabular}{l} $\{SU(k,\ell),U(k,\ell),U(1)
                        Sp(\tfrac{k}{2},\tfrac{\ell}{2})\}\cdot SU(a,b)\cdot$ \\
			$\qquad\cdot U(1)Sp(\tfrac{n}{2};\R)\}$ \\
		$((\gh_{2m;\C})) \oplus ((\gh_{2n;\C}))$ \end{tabular}
	& \begin{tabular}{l} $\gv: (2(ak+b\ell),2(a\ell +bk)) \oplus (2n,2n)$ \\
		$\gu: 0 \text{ and } \gn': (1,0)\oplus (0,1)$ \end{tabular}
                \\ \hhline{|~|-|-|}
   & \begin{tabular}{l} $U(1)Sp(\tfrac{m}{2};\R)\cdot SU(a,b)\cdot 
			U(1)Sp(\tfrac{n}{2};\R)$ \\
		$((\gh_{2m;\C})) \oplus ((\gh_{2n;\C}))$ \end{tabular}
        & \begin{tabular}{l} $\gv: (2m,2m) \oplus (2n,2n)$\\
		$\gu: 0 \text{ and } \gn': (0,1)\oplus (0,1)$ \end{tabular}
                \\ \hhline{|~|-|-|}
     & \begin{tabular}{l}$\{SL(m;\R),GL(m;\R)\}\cdot SL(2;\R)\cdot$ \\
           $\qquad\cdot\{SL(n;\R),GL(n;\R), 
			\R^+Sp(\tfrac{r}{2},\tfrac{s}{2})\}$ \\
		$ ((\gh_{2m;\C})) \oplus ((\gh_{2n;\C}))$\end{tabular}
	& \begin{tabular}{l} $\gv: (2m,2m) \oplus (2n,2n)$\\
		$\gu: 0 \text{ and } \gn': (0,1)\oplus (0,1)$ \end{tabular}
                \\ \hhline{|~|-|-|}
    & \begin{tabular}{l} $\R^+Sp(\tfrac{k}{2},\tfrac{\ell}{2})\cdot 
		SL(2;\R)\cdot \R^+ Sp(\tfrac{r}{2},\tfrac{s}{2})$ \\
                $((\gh_{2m;\C})) \oplus ((\gh_{2n;\C}))$ \end{tabular}
        & \begin{tabular}{l} $\gv: (2m,2m) \oplus (2n,2n)$\\
                $\gu: 0 \text{ and } \gn': (0,1)\oplus (0,1)$ \end{tabular}
                \\ \hhline{|~|-|-|}
    & \begin{tabular}{l} $\{SL(m/2;\H), GL(\tfrac{m}{2};\H)\}\cdot 
			SL(1;\H)\cdot$ \\
        $\qquad\cdot\{SL(\tfrac{n}{2};\H), GL(\tfrac{n}{2};\H), 
			\R^+Sp(\tfrac{n}{2};\R)\}$ \\
         $((\R + \H^{\tfrac{m}{2}}\otimes_\R\R^{1,1}))\oplus 
                ((\R + \R^{1,1}\otimes_\R\H^{\tfrac{n}{2}}))$ \end{tabular}
        & \begin{tabular}{l} $\gv: (2m,2m)\oplus (2n,2n)$\\
		$\gu: 0 \text{ and } \gn': (0,1)\oplus (0,1)$ \end{tabular}
                \\ \hhline{|~|-|-|}
    & \begin{tabular}{l} $\R^+Sp(\tfrac{m}{2};\R)\cdot SL(1;\H)\cdot 
		\R^+Sp(\tfrac{n}{2};\R)$\\
         $((\R + \R^{2m,2m}))\oplus ((\R + \R^{2n,2n}))$ \end{tabular}
        & \begin{tabular}{l} $\gv: (2m,2m)\oplus (2n,2n)$\\
                $\gu: 0 \text{ and } \gn': (0,1)\oplus (0,1)$ \end{tabular}
	\\ \hline \hline

21 & \begin{tabular}{l} $\{SU(k,\ell),U(k,\ell),U(1)Sp(\tfrac{k}{2},
		\tfrac{\ell}{2})\} \cdot SU(a,b)\cdot $ \\ 
	  $\quad\cdot U(r,s),\quad \begin{smallmatrix}
	   k+\ell = m, a+b = 2, (r,s) = (4,0) \text{ or } (2,2)
			\end{smallmatrix}$\\
       $((\gh_{2m;\C}))\oplus ((\gh_{8;\C}))\oplus\R^{2r-2+s,s}$ \end{tabular}
	& \begin{tabular}{l} $\gv: (4k,4\ell)\oplus$\\
		$\quad \oplus (2ar+2bs,2as+2br)$\\
		$\gu: (2r-2+s,s)$ \\ $\gn': (1,0\oplus (1,0)$
		\end{tabular} \\ \hhline{|~|-|-|}
    &  \begin{tabular}{l} $\{SL(m;\R),GL(m;\R)\}\cdot SL(2;\R)\cdot GL(4;\R)$\\
	$((\R + \R^{2m,2m}))\oplus ((\R + \R^{8,8}))
		\oplus \R^{3,3}$ \end{tabular}
	 & \begin{tabular}{l} $\gv: (2m,2m)\oplus (8,8)$\\
		$\gu: (3,3) \text{ and } \gn': (0,1)\oplus (0,1)$
		\end{tabular} \\ \hhline{|~|-|-|} 
   & \begin{tabular}{l} $\{SL(\tfrac{m}{2};\H), GL(\tfrac{m}{2};\H)\}
			\cdot SL(1;\H)\cdot GL(2;\H)$\\
         $((\R + \H^{\tfrac{m}{2},\tfrac{m}{2}}))  
                \oplus ((\R + \H^{2,2})) \oplus \R^{5,1}$ 
		\end{tabular}
	& \begin{tabular}{l} $\gv: (2m,2m)\oplus (8,8)$\\
		$\gu: (5,1) \text{ and } \gn': (0,1)\oplus (0,1)$
                \end{tabular} \\ \hhline{|~|-|-|}
    & \begin{tabular}{l} $Sp(k/2,\ell/2)\cdot GL(2;\R)\cdot GL(4;\R)$\\
	$((\gh_{2m;\C})) \oplus
		((\R + \R^{8,8})) \oplus \R^{3,3}$ \end{tabular}
	& \begin{tabular}{l} $\gv: (4k,4\ell)\oplus (8,8)$\\
		$\gu: (3,3) \text{ and } \gn': (0,1)\oplus (0,1)$ \end{tabular}
	\\ \hhline{|~|-|-|}
    & \begin{tabular}{l} $Sp(\tfrac{m}{2};\R)\cdot U(a,b)\cdot U(r,s)$\\
	$((\R + \R^{2m,2m}))\oplus ((\gh_{8;\C}))
		\oplus\R^{2r-2+s,s}$ \end{tabular}
	& \begin{tabular}{l} $\gv: (2m,2m) \oplus $\\
		$\quad \oplus (2ar+2bs,2as+2br)$\\
		$\gu: (2r-2+s,s)$ \\ $\gn': (0,1)\oplus (1,0)$
		\end{tabular} \\ \hhline{|~|-|-|}
    & \begin{tabular}{l} $Sp(\tfrac{m}{2};\R)\cdot GL(1;\H)\cdot GL(2;\H)$\\
		$((\R + \R^{2m,2m}))\oplus ((\R + \H^{2,2})) \oplus \R^{5,1}$
		\end{tabular}
	& \begin{tabular}{l} $\gv: (2m,2m)\oplus (8,8)$\\
		$\gu: (5,1) \text{ and } \gn': (0,1)\oplus (0,1)$\end{tabular}
        \\ \hline\hline

22 & \begin{tabular}{l} $U(a,b)\cdot U(r,s), 
		\begin{smallmatrix} (a,b)=(2,0) \text{ or } (1,1)\\
                   (r,s)=(4,0)\text{ or }(2,2)\end{smallmatrix}$ \\ 
                $((\gh_{8;\C})) + \R^{2r-2+s,s} +\gs\gu(a,b)$ \end{tabular}
	& \begin{tabular}{l} $\gv: (ar+bs,as+br)$\\
		$\gu: (2r-2+s,s)\oplus (2a-1,2b)$\\
		$\gn': (1,0)$ \end{tabular} \\ \hhline{|~|-|-|}
  & \begin{tabular}{l} $GL(2;\R)\cdot  GL(4;\R)$ \\
		$((\R + \R^{8,8})) \oplus \R^{3,3} \oplus \gs\gl(2;\R)$ 
		\end{tabular}
	& \begin{tabular}{l} $\gv: (8,8)$ \\
		$\gu: (3,3)\oplus (1,2) \text{ and }
		\gn': (0,1)$ \end{tabular} \\  \hhline{|~|-|-|}
  & \begin{tabular}{l} $GL(1;\H)\cdot GL(2;\H)$ \\
		$((\R + \H^{2,2})) \oplus \R^{5,1} \oplus \gs\gl(1;\H)$ 
		\end{tabular}
        & \begin{tabular}{l} $\gv: (8,8)$ \\
		$\gu: (5,1)\oplus (3,0) \text{ and } \gn': (0,1)$
                \end{tabular}
        \\  \hline\hline

23 & \begin{tabular}{l} $U(k,\ell)\cdot U(a,b)\cdot U(r,s)$ \\
		$\begin{smallmatrix} (k,\ell), (r,s) = (4,0) \text{ or } (2,2);
			\text{ and } (a,b) = (2,0) \text{ or } (1,1)
			\end{smallmatrix}$\\ 
                 $\R^{2k-2+\ell,\ell}\oplus ((\gh_{8;\C}))
                \oplus ((\gh_{8;\C})) \oplus \R^{2r-2+s,s}$ \end{tabular}
	& \begin{tabular}{l} $\gv: (ak+b\ell,a\ell +bk)\oplus $\\
			$\qquad \oplus (ar+bs,as+br)$\\
		$\gu: (2k-2+\ell,\ell)\oplus (2r-2+s,s)$\\
		$\gn': (1,0)\oplus (1,0)$
		\end{tabular} \\  \hhline{|~|-|-|}
  & \begin{tabular}{l} $GL(4;\R)\cdot GL(2;\R)\cdot  GL(4;\R)$ \\
		$\R^{3,3}\oplus ((\R + \R^{8,8}))
                \oplus ((\R + \R^{8,8})) \oplus \R^{3,3}$ \end{tabular}
        & \begin{tabular}{l} $\gv: (8,8)\oplus  (8,8)$ \\
		$\gu: (3,3)\oplus (3,3)$\\
			$ \gn': (0,1)\oplus (0,1)$
		\end{tabular}   \\   \hhline{|~|-|-|}
  & \begin{tabular}{l} $GL(2;\H)\cdot GL(1;\H)\cdot GL(2;\H)$ \\
		$\R^{5,1}\oplus ((\R + \H^{2,2}))
		\oplus ((\R + \H^{2,2})) \oplus\R^{5,1}$ \end{tabular}
        & \begin{tabular}{l} $\gv: (8,8)\oplus  (8,8)$ \\
		$\gu: (5,1)\oplus  (5,1)$\\
		$\gn': (0,1)\oplus (0,1)$ \end{tabular} \\ \hline\hline

24 & \begin{tabular}{l} $U(1)\cdot SU(k,\ell)\cdot U(1), \begin{smallmatrix} 
		(k,\ell) = (4,0) \text{ or } (2,2)\end{smallmatrix}$ \\
             $((\gh_{4;\C}))\oplus ((\gh_{4;\C})) \oplus \R^{2k-2+\ell,\ell}$ 
		\end{tabular}
	& \begin{tabular}{l} $\gv: (2k,2\ell)\oplus (2k,2\ell)$\\
		$\gu: (2k-2+\ell,\ell)$ \\
		$\gn': (1,0)\oplus(1,0)$ \end{tabular} \\  \hhline{|~|-|-|}
  & \begin{tabular}{l} $\R^+\cdot SL(4;\R)\cdot \R^+$\\
		$((\R + \R^{4,4})) \oplus ((\R + \R^{4,4})) \oplus \R^{3,3}$
			\end{tabular} 
        & \begin{tabular}{l} $\gv: (4,4)\oplus (4,4)$ \\
		$\gu: (3,3) \text{ and } \gn': (0,1)\oplus (0,1)$ \end{tabular}
		\\ \hline\hline

25 & \begin{tabular}{l} $\{\{1\},\,U(1)\}\cdot SU(k,\ell)\cdot \{\{1\},U(1)\}$\\
        $((\gh_{4;\C})) + \R^{k(k-1)+\ell (\ell-1),2k\ell},\,
		\begin{smallmatrix} k+\ell = 4 \end{smallmatrix}$ 
		\end{tabular}
   	& \begin{tabular}{l} $\gv: (2k,2\ell)$\\
		$\gu: (k(k-1)+\ell (\ell-1),2k\ell)$\\
		$\gn': (1,0)$ \end{tabular} \\   \hhline{|~|-|-|}
    & \begin{tabular}{l} $\{\{1\},\,\R^+\}\cdot SL(4;\R) \cdot \{\{1\},\,\R^+\}$ \\
		$((\R + \R^{4,4}))\oplus \R^{6,6}$ \end{tabular}
	& \begin{tabular}{l} $\gv: (4,4)$\\
		$\gu: (6,6) \text{ and } \gn': (0,1)$ \end{tabular} \\
\end{longtable}
\noindent
All the spaces $G_r/H_r = (N_r\rtimes H_r)/H_r$\,, corresponding to
entries of Table \ref{max-irred}, are weakly symmetric Riemannian 
manifolds except
entry 11 with $H_r = Sp(m)\times Sp(n)$, entry 12 with $H_r = Sp(m)$, 
entry 13 with $H_r = Spin(7) \times (\{1\} \text{ or }SO(2)$, and entry 25 
with $H_r = (\{1\} \text{ or } U(1))$.  In those four cases $G_r/H_r$
is not weakly symmetric.  See \cite[Theorem 15.4.12]{W2007}.

We now extract special signatures from Table \ref{indecomp}.  In order to
avoid redundancy we consider $SO(n)$ only for $n\geqq 3$, $SU(n)$ and $U(n)$
only for $n\geqq 2$, and $Sp(n)$ only for $n\geqq 1$.

\begin{corollary}\label{Lorentz2}
The Lorentz cases, signature of the form $(p-1,1)$ in
{\rm Table \ref{indecomp}}, all are weakly symmetric.  In addition
to their invariant Lorentz metrics,  all 
except $H = GL(1;\R)$ in Case 1 and 
$H = \R^+\cdot SL(2;\R)\cdot \R^+$ in Case 3 have
invariant weakly symmetric Riemannian metrics.  They are

Case 1. $H = U(n)$ with metric on $G/H$ of signature
$(n^2+2n-1,1)$, $H = GL(1;\R)$ with metric on $G/H$ of signature
$(2,1)$. 

Case 2. $H = U(4)$ with metric on $G/H$ of signature
$(26,1)$.

Case 3. $H = U(1)\cdot SU(n)\cdot U(1)$ with metric on 
$G/H$ of signature $(n^2+n+1,1)$, 
$H = \R^+\cdot SL(2;\R)\cdot \R^+$ with metric on
$G/H$ of signature $(3,1)$.

Case 4.  $H = SU(4)$ with metric on $G/H$ of signature
$(20,1)$.

Case 6.  $H = S(U(4)\times U(n))$ with metric on $G/H$ of 
signature  $(8n+6,1)$. 

Case 8.  $H = U(1)\cdot Sp(n)\cdot U(1)$ with metric on 
$G/H$ of signature $(8n+1,1)$

Case 9.  $H = Sp(1)\cdot Sp(n)\cdot U(1)$ with metric on
$G/H$ of signature $(8n+3,1)$

Case 14. $H = U(1)\cdot Spin(7)$ with metric on
$G/H$ of signature $(22,1)$

Case 15.  $H = U(1)\cdot Spin(7)$ with metric on
$G/H$ of signature $(23,1)$

Case 16.  $H = U(1)\cdot Spin(8)\cdot U(1)$ with metric on
$G/H$ of signature $(33,1)$

Case 17.  $H = U(1)\cdot Spin(10)$ with metric on $G/H$ of 
signature $(42,1)$

Case 18.  $H = \{SU(m),U(m),U(1)Sp(m/2)\} \cdot SU(2)$ with 
metric on $G/H$ of signature $(4m+3,1)$

Case 19.  $H = \{SU(m),U(m),U(1)Sp(m/2)\} \cdot U(2)$ with 
metric on $G/H$ of signature $(4m+5,1)$

Case 20.  $H = \{SU(m),U(m),U(1)Sp(m/2)\}\cdot SU(2) \cdot 
\{SU(n),U(n),U(1)Sp(n/2)\}$ with metric on $G/H$ of signature
$(4m+4n+1,1)$

Case 21.  $H = \{SU(n),U(n),U(1)Sp(\tfrac{n}{2})\}\cdot SU(2) \cdot U(4)$
with metric on $G/H$ of signature $(4n+23,1)$.

Case 22.  $H = U(2)\cdot U(4)$ with metric on $G/H$ of 
signature $(25,1)$

Case 23.  $H = U(4)\cdot U(2)\cdot U(4)$ with metric on
$G/H$ of signature $(45,1)$

Case 24.  $H = U(1)\cdot SU(4)\cdot U(1)$ with metric on
$G/H$ of signature $(23,1)$

Case 25.  $H = (U(1)\cdot)SU(4)(\cdot SO(2))$ with metric
on $G/H$ of signature $(20,1)$
\end{corollary}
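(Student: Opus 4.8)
The plan is to read Corollary~\ref{Lorentz2} off Table~\ref{indecomp}, handling its two assertions separately: weak symmetry, which belongs to the whole real form family, and the production of a metric of signature $(p-1,1)$, which is a sign count on $\gn$. For weak symmetry I would quote Definition~\ref{family-def}: a member $G/H$ of a real form family is weakly symmetric exactly when its Riemannian progenitor $G_r/H_r$ is. The assertion then reduces to the classification of weakly symmetric progenitors recorded below the table (that is, \cite[Theorem~15.4.12]{W2007}), by which every progenitor appearing there is weakly symmetric apart from the four families singled out in entries~11, 12, 13 and~25; these fall outside the scope fixed by Definition~\ref{family-def} and are set aside.

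To enumerate the Lorentz entries I would use that, for fixed $G/H$, the $\Ad(H)$--invariant symmetric forms on $\gn$ are parametrized by an independent scaling --- in particular a sign --- on each $\Ad(H)$--irreducible summand of $\gn$ that carries such a form, namely the summands of $\gv$ and of the center $\gz = \gu + [\gn,\gn]$ shown in the ``Signatures'' column. With each summand's signature $(a_i,b_i)$ read off, the smallest minority index of an invariant form is $\sum_i \min(a_i,b_i)$. Hence $G/H$ admits an invariant Lorentz metric exactly when this minimum is $1$, or it is $0$ and $\gn$ has a one--dimensional invariant summand that can be flipped into the minority. Generically it is the second alternative that applies, in the compact case $H = H_r$, where $\gv$ and $\gu$ are definite and a $(1,0)$ piece of $[\gn,\gn]$ is turned into $(0,1)$; the only surviving noncompact cases are two small--dimensional accidents, $H = GL(1;\R)$ in Case~1 and $H = \R^+\cdot SL(2;\R)\cdot\R^+$ in Case~3, whose signatures I would simply compute by hand.

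The genuine obstacle is determining, entry by entry and real form by real form, the exact $\Ad(H)$--decomposition of the center --- in particular whether it contains an invariant line available for flipping. This is delicate precisely because the answer depends sharply on the real form: on a three--dimensional center $\Im\H$ of signature $(3,0)$ an $Sp(1)$ factor acts as $SO(3)$ and leaves no invariant line, hence yields no Lorentz metric, whereas a coordinate $U(1)$ fixes an axis and does produce one. The same sensitivity controls the borderline $U(1)$--configurations of entry~25. Getting these actions right throughout, and matching the outcome against the weak--symmetry classification of the first paragraph so that exactly the families with weakly symmetric progenitor and an available flip are retained, is where the real care is needed; the signature arithmetic itself is routine by comparison.

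Finally the auxiliary Riemannian claim is immediate once the list is fixed. When $H$ is compact one has $G = G_r$ and $H = H_r$, and the positive--definite $\Ad(H_r)$--invariant form on $\gn_r$ gives an invariant weakly symmetric Riemannian metric; this accounts for all but two of the listed cases. The exceptions $GL(1;\R)$ in Case~1 and $\R^+\cdot SL(2;\R)\cdot\R^+$ in Case~3 have noncompact $H$, whose isotropy representation on $\gn$ admits no invariant positive--definite form, so each carries its Lorentz metric but no invariant Riemannian one.
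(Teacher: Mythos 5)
Your outline is, in substance, the paper's own (implicit) argument: Corollary \ref{Lorentz2} is stated without proof and is meant to be read off Table \ref{indecomp} exactly as you organize it --- weak symmetry from Definition \ref{family-def} together with the classification quoted after the table, invariant metrics from independent sign choices on the $\Ad(H)$--invariant summands of $\gn$, and the auxiliary Riemannian statement from compactness of $H$. Those two parts of your proposal are sound, and your $\sum_i\min(a_i,b_i)$ criterion for the minimal minority index is correct.

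The genuine gap is that your enumeration recipe, executed as you state it, does not reproduce the list you are supposed to prove, and you never confront the mismatch. Two concrete failures. First, you assert that $H=\R^+\cdot SL(2;\R)\cdot\R^+$ in Case 3 survives as a ``small--dimensional accident,'' deferring the computation; by your own criterion it does not survive: the table gives $\gv$ of signature $(2,2)\oplus(1,1)$ and $\gn'$ of signature $(0,1)\oplus(0,1)$, so $\sum_i\min(a_i,b_i)=3$, every invariant form on that $8$--dimensional $\gn$ has minority index at least $3$, and the claimed signature $(3,1)$ is impossible (it is not even a signature of an $8$--dimensional space). Second, your observation that a coordinate $U(1)$ fixes an axis in a center $\Im\H$ and ``does produce'' a Lorentz metric is correct, but it then manufactures weakly symmetric Lorentz cases absent from the statement: entry 12 with $H=U(1)\cdot Sp(m)$ (weakly symmetric by the remark following the table) acquires an invariant metric of signature $(2m^2+3m+1,1)$ by flipping that axis; likewise entry 11 with its $U(1)$ option, and the compact form of entry 5, where the central line of $[\gn,\gn]\cong\gu(2)$ can be flipped to give signature $(25,1)$. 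So consistent application of your test simultaneously deletes one stated entry and adds several unstated ones. To arrive at the printed list one must flip only the summands as tabulated --- a convention that contradicts your own $\Im\H$ remark and still does not rescue Case 3. Both discrepancies point to defects in the corollary as printed, but a proof along your lines had to surface them (or adopt the cruder convention explicitly) rather than assert that the hand computations confirm the statement; as written, the proposal establishes neither the stated list nor a clearly identified correction of it.
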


\begin{corollary}\label{t-Lorentz2}
The complexifications of the Lorentz cases listed in {\rm Corollary
\ref{Lorentz2}} all are of trans--Lorentz signature $(p-2,2)$.
The trans--Lorentz cases, signature of the form $(p-2,2)$ in
{\rm Table \ref{indecomp}}, all are weakly symmetric and are as follows.

Case 1.  $H = GL(1;\R)$ with metric on $G/H$ of signature
$(1,2)$. 

Case 3. $H = U(1)\cdot SU(n)\cdot U(1)$ with metric on 
$G/H$ of signature $(n^2+n,2)$, $H = U(1)\cdot SU(1,1)\cdot U(1)$
with metric on $G/H$ of signature $(6,2)$, 
$H = \R^+\cdot SL(2;\R)\cdot \R^+$ with metric on
$G/H$ of signature $(2,2)$ 

Case 8.  $H = U(1)\cdot Sp(n)\cdot U(1)$ with metric on
$G/H$ of signature $(8n,2)$

Case 16.  $H = U(1)\cdot Spin(8)\cdot U(1)$ with metric on
$G/H$ of signature $(32,2)$

Case 19.  $H = \{SU(m),U(m),U(1)Sp(m/2)\} \cdot U(2)$ with 
metric on $G/H$ of signature $(4m+4,2)$

Case 20.  $H = \{SU(m),U(m),U(1)Sp(m/2)\}\cdot SU(2) \cdot 
\{SU(n),U(n),U(1)Sp(n/2)\}$ with metric on $G/H$ of signature
$(4m+4n,2)$

Case 21.  $H = \{SU(n),U(n),U(1)Sp(\tfrac{n}{2})\}\cdot SU(2) \cdot U(4)$
with metric on $G/H$ of signature $(4n+22,2)$.

Case 23.  $H = U(4)\cdot U(2)\cdot U(4)$ with metric on
$G/H$ of signature $(44,2)$

Case 24.  $H = U(1)\cdot SU(4)\cdot U(1)$ with metric on
$G/H$ of signature $(22,2)$
\end{corollary}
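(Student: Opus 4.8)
The plan is to establish the two assertions separately: the enumeration of the trans--Lorentz cases together with their weak symmetry is the substance, and the statement about complexifications of the Lorentz cases of Corollary~\ref{Lorentz2} then follows from it and the sign conventions. I would dispose of weak symmetry first. By Definition~\ref{family-def} a space $G/H$ in a real form family $\{\{G_r/H_r\}\}$ is weakly symmetric exactly when the Riemannian member $G_r/H_r$ is weakly symmetric, and by the remark following Table~\ref{indecomp} (see \cite[Theorem 15.4.12]{W2007}) this holds for every entry of that table except entry $11$ with $H_r = Sp(m)\times Sp(n)$, entry $12$ with $H_r = Sp(m)$, entry $13$ with $H_r = Spin(7)\times(\{1\}\text{ or }SO(2))$, and entry $25$ with its outer factors trivial or $U(1)$. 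So the weak symmetry of each trans--Lorentz case reduces to checking that none of the cases listed arises from one of those four exceptional Riemannian forms, which is immediate from the real forms $H$ attached to the listed cases.

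The heart of the matter is the enumeration. For each entry the invariant symmetric form on $\gn = \gz + \gv$ is the orthogonal sum of the listed forms on $\gv$, on $\gu$, and on $\gn' = [\gn,\gn]$, and the sign convention allows each block of signature $(a,b)$ to be oriented as $(a,b)$ or as $(b,a)$. A total signature $(p-2,2)$ means that, after orienting the blocks, exactly two coordinate directions are negative, equivalently $p-2$ are positive. Since a block of signature $(t,t)$ contributes $t$ to each side while a block $(a,b)$ contributes either $a$ or $b$ to the negative side, I would, entry by entry, solve the elementary requirement that the chosen negative contributions sum to $2$: a balanced block $(t,t)$ with $t\geqq 2$ forces its whole $t$ onto the negative side and so can occur only for $t\leqq 2$, a one--dimensional central block $(1,0)$ contributes $0$ or $1$, and the remaining freedom lies in the $(a,b)$ factors of $\gv$ and $\gu$ with $\min(a,b)$ small. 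Running through entries $1$ through $25$ in this way should produce precisely the real forms $H$ and the signatures recorded in Cases $1,3,8,16,19,20,21,23,24$ and no others. I expect this step to be the main obstacle, not because any single case is hard but because of completeness and bookkeeping: each of the twenty--five entries carries several admissible real forms and several blocks, and the balanced $(t,t)$ blocks together with the sign convention make it easy to miss an orientation summing to $2$ or to attach the wrong real form $H$ to one, so a careful uniform treatment of which blocks can supply the two negative directions is needed.

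Finally I would deduce the statement on complexifications. A Lorentz case of Corollary~\ref{Lorentz2} arises by orienting the blocks so that the negative contributions sum to $1$; complexifying one real coordinate direction --- that is, reorienting a single one--dimensional block, or passing to the adjacent real form of the relevant factor of $H$ when the extra negative direction cannot be produced within an existing block --- moves one unit from the positive to the negative side and thus changes the signature from $(p-1,1)$ to $(p-2,2)$. This operation keeps the space within the same family $\{\{G_r/H_r\}\}$, so by Definition~\ref{family-def} the resulting trans--Lorentz space is again weakly symmetric. Hence every Lorentz case of Corollary~\ref{Lorentz2} complexifies to a weakly symmetric trans--Lorentz case, which is the first assertion of the corollary.
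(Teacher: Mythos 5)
Your treatment of weak symmetry and your block-by-block signature method are exactly the paper's (implicit) proof: the paper writes no argument for this corollary, which is obtained by inspecting Table \ref{indecomp} together with the remark following it that singles out the non--weakly--symmetric entries (11 with $Sp(m)\times Sp(n)$, 12 with $Sp(m)$, 13 with $Spin(7)\times(\{1\}\text{ or }SO(2))$, 25 with at most one $U(1)$ factor), none of which occur in the trans--Lorentz list. However, your proposal has a step that fails. The final paragraph misreads what ``complexification'' means here. In this paper the complexification of $G/H$ is $(G_r)_\C/(H_r)_\C$, a member of the real form family of \emph{twice} the dimension (Definition \ref{family-def}); these are precisely the cases omitted from the tables. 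It is not ``reorienting a single one--dimensional block'' nor ``passing to an adjacent real form'', both of which preserve the dimension. Under the doubling convention used throughout the tables ($\C\otimes_\R\R^{a,b}$ carries signature $(2a,2b)$), a Lorentz case of dimension $p$ and signature $(p-1,1)$ complexifies to a space of dimension $2p$ with signature $(2p-2,2)$, which is the (essentially one--line) content of the first sentence. Worse, the operation you describe does not exist in general, so your argument would prove a false statement: it would force every Lorentz case of Corollary \ref{Lorentz2} to reappear in the present list with one extra negative direction. Comparing the two corollaries shows this is not so: Cases 2, 4, 6, 9, 14, 15, 17, 18, 22, 25 of Corollary \ref{Lorentz2} are Lorentz but admit no trans--Lorentz metric. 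Concretely, for Case 2 ($H=U(4)$) the blocks have signatures $(8,0)$, $(6,0)$, $(1,0)$, $(12,0)$, and since each block can only be flipped wholesale, the achievable negative dimensions are the subset sums of $\{8,6,1,12\}$ --- the value $2$ never occurs.

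Separately, the enumeration, which is the actual substance of the corollary, is never carried out: ``running through entries $1$ through $25$ \dots should produce precisely the listed cases'' is a statement of intent, not a verification. Your guiding principles are sound (a balanced block $(t,t)$ always contributes $t$ negative directions, hence $t\leqq 2$ is forced; a definite block contributes $0$ or its full dimension), and they are the same principles the paper relies on; but completeness of the list and the correctness of each listed signature require actually checking every entry and every admissible real form $H$, which is precisely the part you defer.
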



\end{document}